\Crefname{counterexample}{counterexample}{counterexamples}
\Crefname{counterexample}{Counterexample}{Counterexamples}
\Crefname{conjecture}{conjecture}{conjectures}
\Crefname{conjecture}{Conjecture}{Conjectures}
\theoremstyle{plain}
\newtheorem{theorem}{Theorem}[section]
\newtheorem{proposition}[theorem]{Proposition}
\newtheorem{lemma}[theorem]{Lemma}
\newtheorem{corollary}[theorem]{Corollary}
\theoremstyle{definition}
\newtheorem{definition}[theorem]{Definition}
\newtheorem{example}[theorem]{Example}
\newtheorem{question}[theorem]{Question}
\newtheorem{conjecture}[theorem]{Conjecture}
\newtheorem{remark}[theorem]{Remark}
\newtheorem{algorithm}[theorem]{Algorithm}
\newcommand{\N}{ \ensuremath{\mathbb{N}}}
\newcommand{\Z}{ \ensuremath{\mathbb{Z}}}
\newcommand{\R}{ \ensuremath{\mathbb{R}}}
\newcommand{\ZZ}{ \ensuremath{\mathbb{Z}}}
\newcommand{\Fibo}{F}
\renewcommand{\int}{\operatorname{int}}
\newcommand{\width}{\operatorname{width}}
\newcommand{\conv}{\operatorname{conv}}
\newcommand{\aff}{\operatorname{aff}}
\newcommand{\cone}{\operatorname{cone}}
\newcommand{\Cyc}{\operatorname{Cycl}}
\newcommand{\Vol}{\operatorname{Vol}}
\DeclareMathOperator{\arcsinh}{arcsinh}
\newcommand{\vi}[1][i]{v^{(#1)}}
\newcommand{\uj}[1][j]{u^{(#1)}}
\newcommand{\A}[2]{S(#1,#2)}
\newcommand{\up}[1]{{}^{(#1)}}
\newcommand{\flt}{\operatorname{Flt}}
\newcommand{\blue}[1]{{\color{black} #1}}
\newcommand{\ra}{\Rightarrow}
\newcommand{\lra}{\Leftrightarrow}
\date{\today}
\author[J.~Doolittle]{Joseph Doolittle}
\author[L.~Katth\"an]{Lukas Katth\"an}
\author[B.~Nill]{Benjamin Nill}
\author[F.~Santos]{Francisco Santos}
\address[J.~Doolittle]
{
Institut f\"ur Geometrie, TU Graz, Austria
}
\email{jdoolittle@tugraz.at}
\address[L. ~Katth\"an]
{
Institut f\"ur Mathematik,
Goethe-Universit\"at Frankfurt,
Germany
}
\email{lukaskatthaen@gmx.de}
\address[B. ~Nill]
{
Otto-von-Guericke Universit\"at-Magdeburg, Germany.}
\email{benjamin.nill@ovgu.de}
\address[F.~Santos]
{
Dep. of Mathematics, Statistics and Comp. Sci., 
Univ. of Cantabria, Spain
}
\email{francisco.santos@unican.es}
\title{Empty simplices of large width}
\thanks{Work of F. Santos is supported by grants  PID2019-106188GB-I00 and PID2022-137283NB-C21  funded by MCIN/AEI/10.13039/501100011033, by the Einstein Foundation Berlin under grant EVF-2015-230 and by
project CLaPPo (21.SI03.64658) of Universidad de Cantabria and Banco Santander. B. Nill has been a PI in the Research Training Group Mathematical Complexity Reduction funded
by the Deutsche Forschungsgemeinschaft (DFG, German Research Foundation) - 314838170, GRK 2297
MathCoRe. }
\begin{document}

\begin{abstract}An empty simplex is a lattice simplex in which vertices are the only lattice points.
We show two constructions leading to the first known empty simplices of width larger than their dimension:

\begin{itemize}

\item We introduce \emph{cyclotomic simplices} and exhaustively compute all the cyclotomic simplices of dimension $10$ and volume up to $2^{31}$.
Among them we find five empty ones of width $11$, and none of larger width.

\item Using \emph{circulant} matrices of a very specific form, we construct empty simplices of arbitrary dimension $d$ and width growing asymptotically as  $d/\arcsinh(1) \sim 1.1346\,d$.
\end{itemize}

%The latter are only $0.3\%$ less wide than the widest lattice-free convex bodies known.

\end{abstract}

\maketitle

\section{Introduction and summary of results}

Let $\Lambda\subset \R^d$ be a lattice. The \emph{flatness theorem} \cite{Banaszczyk_etal1999, Khinchine, KannanLovasz1988} states that the maximum lattice width among all hollow convex bodies in $\R^d$ is bounded by a constant $\flt(d)$ depending solely on $d$.
Here, a \emph{convex body} is a compact convex subset of $\R^d$ with non-empty interior, and a convex body
$K$ is \emph{hollow} or \emph{lattice-free} (with respect to the lattice $\Lambda$) if the interior of $K$ does not contain any lattice point. \blue{The \emph{width of $K$ with respect to a functional} $f\in (\R^d)^*$ is the length of the interval $f(K)$} and
the \emph{lattice width} (or just \emph{width}, for short) of $K$ is the minimum width it has with respect to $f\in \Lambda^*\setminus \{0\}$. \blue{Note that the width depends on the lattice $\Lambda$ being considered. If $K$ is a lattice polytope then its width is a nonnegative integer.}

The best current upper bound for the width of general hollow convex bodies is $O(d^{4/3}\log^a d)$ for some constant $a$~\cite{Rudelson2000}. But no construction of hollow convex bodies of width larger than linear is known, 
so the general belief is that $\flt(d) \in O(d)$, perhaps modulo polylogarithmic factors. 
In fact, this is known to hold for some classes of convex bodies such as simplices and centrally symmetric bodies, for which width is known to be bounded by $O(d\log d)$~\cite{Banaszczyk1996,Banaszczyk_etal1999}.

\emph{Empty simplices}, that is, lattice simplices with no lattice points apart from their vertices, play an important role in the theory of lattice polytopes for two reasons. On the one hand, every lattice polytope can be triangulated into empty simplices, so results about empty simplices often have implications for general lattice polytopes. On the other hand, empty simplices (translated to have a vertex at the origin) correspond to \emph{terminal abelian finite quotient singularities}, of interest in the minimal model program (see, e.g., \cite[Chapter~11]{CoxLittleSchenck2011} or \cite[Chapter~14]{Matsuki2002}). 

\blue{
The starting motivation for this paper is that,
experimentally, empty simplices of large width have an affine symmetry acting cyclically over their vertices (see details below). This suggests it is interesting to study systematically empty simplices with this property. We do so in Section~\ref{sec:cyclic} and then study two constructions of them, that we call \emph{cyclotomic} and \emph{circulant} simplices, in Sections~\ref{sec:cyclotomic} and \ref{sec:circulant}.

Cyclotomic simplices include all the simplices of prime volume  with such a cyclic symmetry (Corollary~\ref{coro:cyclotomic2}). There is one for each prime volume $N$ and dimension $d$ such that $d+1$ divides $N-1$. We denote it $\Cyc(d,N)$. Circulant simplices are defined via cyclic permutations of vertex coordinates. We look at a particular family of them, denoted
 $\A{d}{m}$ where $d$ is the dimension and $m\in \N$ is a parameter used in their definition.
See more details in the discussion below, and precise definitions in Sections \ref{sec:cyclotomic} and \ref{sec:circulant}.
}

\subsection*{Main results} 
In this paper \blue{we use these two constructions to produce} the first examples of empty simplices of width larger than their dimension. The following table summarizes our findings in small dimensions. In it, and elsewhere, volume is normalized to the lattice; unimodular simplices have volume one.

\begin{theorem}
\label{thm:small}
There are empty simplices of the following dimensions and widths:
\rm
\small
\[
\begin{array}{c|c|cc}
\text{empty simplex}& \text{volume} & \text{dimension} & \text{width}\\
\hline
\Cyc(4, \ 101) = \A{4}{2} &101 & 4 & 4\\
%\hline
\Cyc(6, \ 6\,301) = \A{6}{3} &6\,301 & 6 & 6\\
%\hline
\A{8}{4} & 719\,761 & 8 & 8\\
%\hline
%\Cyc(10, 15\,465\,649) = \A{5}{4} &15\,465\,649 & 10 & 8\\
%\A{10}{5} &130\,137\,151 & 10 & 10\\
\Cyc(10, \ 582\,595\,883)  &582\,595\,883 & 10 & {\bf 11}\\
%\hline
%\A{12}{6} & 34\,113\,645\,541 & 12 & 12\\
%\A{14}{7} & 12\,220\,528\,675\,748 & 14 & 14\\
\A{16}{9} &36\,373\,816\,216\,801\,891 & 16 & {\bf 18}\\
%\A{18}{10} &21\,977\,935\,343\,566\,521\,901 & 18 & 20\\
%\A{20}{11} &16\,365\,824\,640\,106\,717\,688\,276 & 20 & 22\\
\A{30}{17} &  \simeq 2.9{\cdot} 10^{38}& 30 & {\bf 34}\\
\A{46}{26} &  \simeq 6.6{\cdot} 10^{66}& 46 & {\bf 52}\\
\A{60}{34} &  \simeq 5.4{\cdot} 10^{93}& 60 & {\bf 68}\\
\end{array}
\]
\end{theorem}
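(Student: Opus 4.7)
The plan is to treat the theorem as a table of explicit examples and organize the proof around the two families mentioned in the abstract: the cyclotomic simplices $\Cyc(d,v)$ and the circulant simplices $S(d,k)$. Before anything else, I would spell out the precise definitions: $\Cyc(d,v)$ is the lattice simplex whose vertex set is read off from certain roots of unity modulo $v$ in a cyclic quotient $\Z^d/L$, and $S(d,k)$ is the particular circulant construction parametrized by the pair $(d,k)$. Once the definitions are in place, each row of the table reduces to verifying three numbers for a concrete simplex: that it is empty, what its normalized volume is, and what its lattice width is.

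For the volumes the work is essentially arithmetic: in both families the simplex is presented as the fundamental simplex of an explicit finite abelian group, so the normalized volume equals the order of that group, which is a determinant of a small circulant/cyclotomic matrix. I would compute these determinants directly (and in closed form for $S(d,k)$, where a circulant determinant formula applies), and match the numbers in the table.

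The two difficult properties are emptiness and width. For the three small cyclotomic entries $\Cyc(4,101)$, $\Cyc(6,6\,301)$ and $S(8,4)=\Cyc(8,\cdot)$, and for the dimension $10$ example, the strategy I would follow is exhaustive enumeration: systematically list all cyclotomic simplices of dimension $d\le 10$ with volume below the claimed cutoff $2^{31}$, and for each one test emptiness by reducing the $d+1$ vertex coordinates modulo the group and checking that no nontrivial integer combination lies in the simplex, and test width by solving a small integer program $\min_{f\ne 0}\bigl(\max_i f(v_i)-\min_i f(v_i)\bigr)$ over the dual lattice. The content of the theorem in this range is the report of this computer search; the paper would have to document the enumeration scheme carefully enough that the five dimension-$10$ width-$11$ examples genuinely exhaust the search space.

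The structurally harder part, and where I expect the main obstacle to sit, is the family $S(d,k)$ for $d\in\{16,30,46,60\}$ and the corresponding $k$. Here I would aim for a theoretical lemma rather than a computation: a description of the lattice-width functional of $S(d,k)$ as the optimum of an explicit trigonometric / circulant optimization problem whose bound grows like $d/\arcsinh(1)$, and a parallel emptiness lemma showing that the only lattice points of $S(d,k)$ in its containing torus are the vertices, for the chosen pairs $(d,k)$. Both lemmas should fall out of the circulant symmetry of $S(d,k)$: the automorphism group acts transitively on vertices, so width can be analyzed along a single orbit of functionals, and emptiness amounts to a divisibility condition on the single generating vector of the cyclic quotient. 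Once those general statements are proved, the four rows in the table become a finite check of the divisibility condition and an evaluation of the closed-form width for the listed $(d,k)$. The main obstacle, then, is isolating the correct width formula for $S(d,k)$ cleanly enough that the four entries $\{18,34,52,68\}$ appear as exact integer values rather than as rounded estimates.
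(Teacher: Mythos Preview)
Your overall plan---computer search for the cyclotomic entries, theoretical lemmas for the circulant ones---matches the paper. But you have swapped the roles of width and emptiness in the circulant family, and this is a genuine gap. The width of $S(d,m)$ for even $d$ is \emph{exactly} $2m$, proved by a short combinatorial argument on integer functionals (find two vertices whose values differ by at least $2m$); no trigonometry or asymptotics is involved, and the table entries $18,34,52,68$ are literally $2\cdot 9,\,2\cdot 17,\,2\cdot 26,\,2\cdot 34$. The $\arcsinh$ appears only in the \emph{emptiness} analysis: one shows that $S(d,m)$ is empty if and only if the standard basis vectors lie outside it, which reduces to the polynomial inequality $m^{d-1}<c_{d-1}(m)$ for a certain continuant $c_{d-1}$. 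The threshold $m_0(d)$ solving $m^{d-1}=c_{d-1}(m)$ then satisfies $2m_0(d)/d\to 1/\arcsinh(1)$, and the four table rows amount to checking $m<m_0(d)$ for the listed pairs. So the ``main obstacle'' you identify---isolating a clean width formula---is not an obstacle at all; the real content is the geometric emptiness criterion, which is not a divisibility condition but an inequality, proved via a Minkowski-sum containment argument rather than group-theoretic reasoning (and note $S(d,m)$ is not always cyclic).

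One smaller correction: $S(8,4)$ is \emph{not} a cyclotomic simplex. Since $d+1=9$ is composite, any cyclotomic $8$-simplex of prime volume has width at most one, so the row for $d=8$ must already be handled by the circulant theory, not by the cyclotomic search.
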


The first entry in the table was known; it is the empty $4$-simplex of width four found by Haase and Ziegler~\cite{HaaseZiegler2000}, and shown by Iglesias and Santos~\cite{IglesiasSantos2019} to be the unique empty $4$-simplex of width larger than three. 
We include it in the table to emphasize that our constructions reflect two ways of understanding \blue{it, and of }generalizing the unimodular symmetry that acts cyclically on its vertices.

But even our other simplices of width \emph{equal} to their dimension are new. 
It is very easy to construct hollow lattice $d$-simplices of width $d$ (the $d$-th dilation of a unimodular $d$-simplex is one example), but for empty simplices the same is not true; e.g., all empty 3-simplices have width one.
In fact, the first proof that there are empty simplices of arbitrarily large width (in growing dimension) was non-constructive, by Kantor~\cite{Kantor}, and the widest empty simplices before this paper in every dimension $d>4$ are those constructed by Seb\H{o}~\cite{Sebo1999}, of width $d-1$ for even dimension $d$, and width $d-2$ when $d$ is odd. 

\blue{Let us } denote by $\flt_e(d)$ the maximum lattice width among empty $d$-simplices, \blue{which must be attained since it is a positive integer bounded by $\flt(d)$}.
What we know  about $\flt_e(d)$ can be summarized as follows:

\begin{theorem}[partially Codenotti and Santos~\cite{CodenottiSantos2020}]
\label{thm:limit}
\hspace{0pt}
\begin{enumerate}
\item The sequence $(\flt_e(d))_{d\in \N}$ is weakly increasing: $\flt_e(d+1) \ge \flt_e(d)$ for all $d$.
\item $\flt_e(d(m+3)) \ge m\, \flt_e(d), \quad \forall m,d\in \N$.
\item
$\lim_{d\to \infty} \frac{\flt_e(d)}{d}$ exists (in $\R \cup \{\infty\}$) and it equals  $\sup_{d\in \N} \frac{\flt_e(d)}d$.
\end{enumerate}
\end{theorem}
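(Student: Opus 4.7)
The plan is to derive (3) as a Fekete-type limit consequence of (1) and (2), with the latter two obtained by explicit constructions (largely due to Codenotti--Santos, \cite{CodenottiSantos2020}). I would start with (3), which is the cleanest step. Set $L := \sup_{d \in \N} w_e(d)/d \in [0,\infty]$, so that $\limsup_{d \to \infty} w_e(d)/d \le L$ is immediate from the definition of supremum. For the matching $\liminf$, fix $d_0 \in \N$ and, for any $d' \ge 4d_0$, write $d' = d_0 m + r$ with $0 \le r < d_0$, so that $m \ge 4$. Applying (1) and (2) in succession yields
\[
w_e(d') \;\ge\; w_e(d_0 m) \;\ge\; (m-3)\, w_e(d_0),
\]
and since $d' < d_0(m+1)$, dividing gives $w_e(d')/d' \ge \tfrac{m-3}{m+1}\cdot w_e(d_0)/d_0$. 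As $d' \to \infty$ (equivalently $m \to \infty$), the factor $(m-3)/(m+1)$ tends to $1$, so $\liminf_{d' \to \infty} w_e(d')/d' \ge w_e(d_0)/d_0$. Taking the supremum over $d_0$ gives $\liminf \ge L$, and combined with the earlier $\limsup$ bound the limit exists and equals $L$.

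For (1), given an empty $d$-simplex $P \subset \R^d$ with vertices $v_0, \dots, v_d \in \Z^d$ and width $w$, I would construct an empty $(d+1)$-simplex of width at least $w$ as a pyramid over a non-standard lift of $P$ into $\R^{d+1}$. Lift $v_i \mapsto (v_i, \phi(v_i))$ via a lattice functional $\phi \in (\Z^d)^*$, and add an apex $a = (a', a_{d+1}) \in \Z^{d+1}$ satisfying $|a_{d+1} - \phi(a')| \ge w$. A direct calculation shows that for any primitive lattice direction $(g, c) \in (\Z^{d+1})^*$ with $g + c\phi \ne 0$, the width of the pyramid equals the width of $P$ in direction $g + c\phi$, hence $\ge w$; in the remaining ``graph'' direction $(-\phi, 1)$ the width equals $|a_{d+1} - \phi(a')| \ge w$. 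The apex and $\phi$ must then be chosen so that each integer-height slice of the pyramid contains no interior lattice points; this is the emptiness condition and the technical crux of part (1).

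For (2), I would arrange $m$ translates of $P$ in mutually orthogonal $d$-planes of $\R^{dm}$ and select $dm+1$ of the $m(d+1)$ candidate vertices to form a $(dm)$-simplex. Emptiness would follow from the emptiness of each copy of $P$ together with the orthogonal placement, and the width bound $(m-3)w$ would emerge from decomposing primitive lattice functionals along the block structure and summing contributions across the $m$ copies, with the loss of $3$ attributable to boundary effects in the vertex selection.

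The principal obstacle is part (2): the detailed vertex selection and the accompanying verification that the resulting simplex is both empty and has width as sharp as $(m-3)w$ (rather than some weaker multiple of $w$) require careful bookkeeping across the $m$ blocks. Part (1) is essentially the $m=1$ case of the same circle of ideas and is technically simpler, while part (3) then follows purely formally from (1) and (2) via the Fekete argument above.
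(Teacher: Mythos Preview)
Your derivation of (3) from (1) and (2) via the Fekete-type argument is correct and is exactly what the paper intends: it merely remarks that the monotonicity in (1) upgrades the $\limsup$ of \cite[Theorem~1.8]{CodenottiSantos2020} to a genuine limit, and you have spelled this out. For (2) the paper gives no proof either and simply cites \cite[Theorem~1.8]{CodenottiSantos2020}, so nothing beyond the citation is required.

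The genuine gap is in (1). Your pyramid framework and width computation are fine (though the width in a direction $(g,c)$ is \emph{at least}, not equal to, the width of $P$ along $g+c\phi$), but you stop precisely at the decisive step, conceding that ``the apex and $\phi$ must then be chosen'' to make the pyramid empty without indicating how. For a generic apex the pyramid will \emph{not} be empty (e.g.\ placing the apex vertically above a vertex of $P$ already fails), and it is not a priori clear that a good choice exists. The paper supplies the missing idea, and it is short: take $\phi=0$ and write $P=\conv(0,v_1,\dots,v_d)$; let $f$ be the linear functional with $f(v_i)=1$ for all $i$; and choose the apex to be $(v_{d+1},h)$ where $v_{d+1}$ is a lattice point in the \emph{interior} of $\cone(v_1,\dots,v_d)$ minimizing $f$. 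Minimality forces $\conv(0,v_1,\dots,v_d,v_{d+1})$ to have no lattice points other than its $d+2$ vertices, and since this polytope is the image of the pyramid $\conv((0,0),(v_1,0),\dots,(v_d,0),(v_{d+1},h))$ under the projection $\R^{d+1}\to\R^d$, the pyramid is empty for every $h$. Taking $h\ge w_e(d)$ then gives width exactly $w_e(d)$ by your own width analysis.
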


\begin{proof}
For part (1), let $\Delta=\conv(0, v_1,\dots,v_d)$ be an empty simplex of width $\flt_e(d)$, which we assume to have a vertex at the origin without loss of generality. Let $f\in (\R^d)^*$ be the linear functional with $f(v_i)=1$ for all $i$ and let
$v_{d+1}$ be a lattice point minimizing $f$ among the lattice points in the interior \blue{of the cone $\cone(v_1,\dots,v_d)$ generated by the vertices of $\Delta$}. 

\blue{Since $\conv(0, v_1,\dots,v_d, v_{d+1})$ is contained in 
\[
\cone(v_1,\dots,v_d) \cap f^{-1}([0,f(v_{d+1})]),
\]}%
minimality of $v_{d+1}$ implies that the polytope $\conv(0, v_1,\dots,v_d, v_{d+1})$ has no lattice points other than its vertices. In particular, for any choice of $h\in \Z\setminus\{0\}$ we have that 
\[
\tilde \Delta(h) :=\conv( (0,0), (v_1,0),\dots,(v_{d}, 0), (v_{d+1},h))
\]
is an empty $(d+1)$-simplex. If we take $h\ge \flt_e(d)$ we have that $\tilde \Delta(h)$ has width \blue{at least } $\flt_e(d)$  because:
\begin{itemize}
\item The width of $\tilde \Delta(h)$ with respect to any lattice functional not constant on $\R^d\times \{0\}$ is at least the width of 
$\tilde \Delta(h) \cap (\R^d\times \{0\})$, which equals $\flt_e(d)$.
\item The width of $\tilde \Delta(h)$ with respect to any lattice functional $g$ constant on $\R^d\times \{0\}$ equals \blue{$g(h)$, which is a positive multiple of $h$}.
\end{itemize}

Part (2) is proved as Theorem 1.8 in \cite{CodenottiSantos2020}, with a construction related to Seb\H{o}'s. Part (3) is also in \cite[Theorem 1.8]{CodenottiSantos2020}, except that there a $\limsup$ appears instead of $\lim$. But the monotonicity in part (1) allows an easy modification of that proof to give a $\lim$.
\end{proof}

Our constructions imply that the $\lim_{d\to \infty} \frac{\flt_e(d)}{d} $ in the previous theorem is at least 1.1346 (see Theorem~\ref{thm:main}), in particular disproving the following guess from \cite[p.~403]{Sebo1999}: 
``it seems to be reasonable to think that the maximum width of an empty integer simplex in $\R^n$ is $n$ + constant''. 

\subsection*{Cyclotomic and circulant simplices}

As suggested by our notation for the simplices in Theorem~\ref{thm:small}, we have constructions of two different types:

In Section~\ref{sec:cyclotomic} we introduce \emph{cyclotomic simplices}: lattice simplices of prime volume $N$ constructed using primitive roots of unity modulo $N$.
There is one cyclotomic simplex $\Cyc(d,N)$ for each choice of dimension $d$ and prime volume $N$, as long as $d+1$ divides $N-1$ (so that $(d+1)$-th roots of unity modulo $N$ exist).
In order for them to have large width, $d+1$ has to be prime too (see Corollary \ref{coro:cyclotomic}).
 We have computationally enumerated \emph{empty} cyclotomic simplices of dimensions 4, 6 and 10 and of volume up to $2^{31}$  and found that \blue{in dimension 10 there are five of width 11. See a more detailed discussion in Section~\ref{sec:cyclotomic}}.
%\begin{itemize}
%\item For $d=4$, there are exactly four of them. $\Cyc(4, 101)$, of width four, is both the largest and widest one.
%%\hline
%\item For $d=6$, there are exactly $88$. Their maximum width is six, attained by those of volumes $6301$,
%$10753$,
%$11117$,
%$15121 $,
%$16493 $, and
%$17683$. The latter is also the largest empty cyclotomic simplex.
%
%\item For $d=10$, there are $217\,652$ empty cyclotomic simplices, the maximum volume among them being $1\,757\,211\,061$.
%Their maximum width is 11, attained by five simplices with volumes between $582\,595\,883$ and $1\,113\,718\,783$. 
%\end{itemize}

In Section~\ref{sec:circulant} we present another construction, using simplices with explicit coordinates that are symmetric under cyclic permutation. 
We call them \emph{circulant simplices} since their matrix of vertex coordinates is a circulant matrix. 
Our exploration of circulant simplices starts with the observation that some of the widest simplices known, including the cyclotomic simplices $\Cyc(4, 101)$ and $\Cyc(6, 6\,301)$, are circulant of a very specific form. 
Let $v=(1,m,0,\cdots,0, -m)\in \R^{d+1}$, where $m$ is a positive integer and \(v\) has $d-2$ zero entries. Let $\vi$ be the $i$-th cyclic shift of $v$. We define
\[
\A{d}{m}:= \conv(\vi[0], \dots,\vi[d]),
\]
which is a $d$-simplex in the hyperplane $\sum_i x_i=1$ in $\R^{d+1}$. 
For odd $d$ this simplex has width one, but for even $d$ 
\blue{the situation is quite different. The following statement summarizes the main results of Section~\ref{sec:circulant}. The $m_0$ in the statement is
%it has width $2m$ (Theorem~\ref{thm:circulant}). 
%
%Moreover, in Theorem~\ref{thm:empty} (see also Corollary~\ref{coro:empty}) we show that $\A{d}{m}$ is empty whenever $m$ is below a threshold $m_0(d)$ 
defined as the unique positive solution of the polynomial equation
\[
m^{d-1} = \sum_{i=0}^{d/2-1} \binom{d-1-i}{i}m^{2i}.
\]
Equivalently, it equals 
$
%m_0(d) =
\frac{1}{2\sinh\alpha_0},
 $
where $\alpha_0=\alpha_0(d)$ is the unique solution of $\cosh \alpha_0 = \sinh (d\alpha_0)$. See details in Section~\ref{sec:asymptotics}.

\begin{theorem}
In each even dimension $d\ge 4$ and for every $m\le \lfloor m_0(d)\rfloor$, the circulant $d$-simplex $S(d,  m)$
is empty and has width $2 m$. Hence:
\[
\lim_{d\to \infty} \frac{\flt_e(d)}{d} \ge  \lim_{d\to \infty} \frac{2m_0(d)}{d} = \frac{1}{\arcsinh(1)} = \frac{1}{\ln(1+\sqrt2)} \simeq 1.1346.
\]
\label{thm:main}
\end{theorem}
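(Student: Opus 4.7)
The plan is to assemble this theorem by combining three main results already established in Sections~\ref{sec:circulant} and~\ref{sec:asymptotics} with Theorem~\ref{thm:limit}. All the heavy lifting has in fact been done: this statement is essentially a bookkeeping consequence.

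For the first sentence, I would proceed in two steps. First, applying Theorem~\ref{thm:circulant} with $m = \lfloor m_0(d)\rfloor$ shows that for even $d$ the simplex $S(d,\lfloor m_0(d)\rfloor)$ has width exactly $2\lfloor m_0(d)\rfloor$. Second, since $\lfloor m_0(d)\rfloor \le m_0(d)$, Corollary~\ref{coro:empty} (the emptiness threshold coming from Theorem~\ref{thm:empty}) guarantees that this simplex is empty. In particular,
\[
w_e(d) \;\ge\; 2\lfloor m_0(d)\rfloor \qquad \text{for every even } d.
\]

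For the asymptotic lower bound, I would invoke Theorem~\ref{thm:asymptotics} to get $\lim_{d\to\infty} 2m_0(d)/d = 1/\arcsinh(1)$. Because this limit is positive, $m_0(d)\to\infty$, so $\lfloor m_0(d)\rfloor/m_0(d)\to 1$. Quantitatively, $2\lfloor m_0(d)\rfloor/d \ge 2m_0(d)/d - 2/d$, which tends to $1/\arcsinh(1)$. Combining with the inequality from the previous paragraph yields
\[
\liminf_{\substack{d\to\infty\\ d \text{ even}}} \frac{w_e(d)}{d} \;\ge\; \frac{1}{\arcsinh(1)}.
\]

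To pass from the even subsequence to the full sequence, I would use Theorem~\ref{thm:limit}. Part~(3) guarantees that $\lim_{d\to\infty} w_e(d)/d$ exists in $\R\cup\{\infty\}$, so it must agree with the limit along any infinite subsequence whose liminf it dominates; equivalently, using the monotonicity in part~(1) one has $w_e(d)/d \ge w_e(d-1)/d = \bigl(w_e(d-1)/(d-1)\bigr)\cdot (d-1)/d$ for odd $d$, which also passes the bound through. The closed form $\arcsinh(1) = \ln(1+\sqrt2)$ is the standard elementary identity $\arcsinh x = \ln(x+\sqrt{x^2+1})$.

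There is no genuine obstacle beyond the earlier theorems: the only step worth noting is that replacing $m_0(d)$ by its floor does not damage the linear-in-$d$ asymptotics, and that the monotonicity of $w_e$ lets us convert an even-dimensional lower bound into one valid for the full limit.
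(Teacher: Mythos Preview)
Your argument is correct and mirrors the paper's implicit proof, which simply assembles Theorem~\ref{thm:circulant} (width), Corollary~\ref{coro:empty} (emptiness), Theorem~\ref{thm:asymptotics} (asymptotics), and Theorem~\ref{thm:limit} (existence and monotonicity of the limit). One small point to tighten: Corollary~\ref{coro:empty} requires the \emph{strict} inequality $m<m_0(d)$, whereas you only invoke $\lfloor m_0(d)\rfloor \le m_0(d)$; to close this you should note that $m_0(d)$ is never an integer for even $d\ge 4$, since it is a root of the monic integer polynomial $m^{d-1}-c_{d-1}(m)$ with constant term $-1$, and the rational root theorem rules out $m_0(d)=\pm1$ (as $c_{d-1}(1)=F_d>1$).
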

}
%In Section~\ref{sec:asymptotics} we relate the geometry of $\A{d}{m}$ to the so-called Fibonacci polynomials. This relation implies that $m_0(d)$ can equivalently be 
%defined as
%\[
%m_0(d) =\frac{1}{2\sinh\alpha},
% \]
%where $\alpha$ is the unique solution of $\cosh \alpha = \sinh (d\alpha)$.}
% This gives the following asymptotics for $m_0(d)$ (Theorem~\ref{thm:asymptotics}):
%\[
%\lim_{d\to \infty} \frac{2m_0(d)}{d} = \frac{1}{\arcsinh(1)}.% \simeq 1.1346.
%\]

%As a consequence:

The dimensions $16$, $30$, $46$ and $60$ that appear in Theorem~\ref{thm:small}
are the smallest values of $d$ for which $2m_0(d)$ exceeds, respectively, $d+2$, $d+4$, $d+6$, and $d+8$. 

\goodbreak
\subsection*{Discussion and questions} 

\subsubsection*{Asymptotics}
The main open question on this topic is 

\begin{question}
Is $\lim_{d\to \infty} \frac{\flt_e(d)}{d}$ equal, or close to, $\frac1{\arcsinh(1)}$? Is it finite?
\end{question}

Another important question is whether this limit is equal, similar, or significantly different from the same limit for the  original flatness constant \blue{$\flt(d)$ in which arbitrary convex bodies are allowed.} The best lower bound that we know for the latter is $2$, which follows from a very recent construction of  lattice-free simplices of width close to $2d$~\cite{MSW21}. (Note, however, that their simplices are not lattice simplices).
%(see \cite{CodenottiSantos2020}) is 
%\[
%\lim_{d\to \infty} \frac{\flt(d)}{d} \ge \frac{2+\sqrt2}3 = 1.1381\dots
%\] 
%It is striking that this is only $0.3\%$ above the lower bound for $\flt_e(d)$ in Theorem~\ref{thm:main}.

\subsubsection*{Parity}
Empty simplices of large width seem easier to construct in even dimensions. Apart from the fact that in both of our constructions we need $d$ to be even, the construction of Seb\H{o}~\cite{Sebo1999} gives a better result in even dimensions. Moreover, we know that $\flt_e(2)=\flt_e(3)=1$, while $\flt_e(4)=4$. This suggests the following possibility:

\begin{question}
Is $\flt_e(2k+1) = \flt_e(2k)$ for every $k\in \N$?
\end{question}

Observe that for the original flatness constant the same does not happen, since $\flt(d+1) \ge \flt(d)+1$ for every $d$ (\cite[Proposition 1.7]{CodenottiSantos2020}).

\subsubsection*{Symmetric empty simplices}
Both of our constructions produce empty simplices that have a unimodular
symmetry acting cyclically on vertices. In fact, our work can be understood as an exploration of such symmetries, and is \blue{inspired} by the same type of symmetry in other constructions of convex bodies of large width:

\begin{itemize}
\item The empty simplices of even dimension $d$ and width $d-1$ constructed by Seb\H{o}~\cite{Sebo1999} are in fact circulant simplices.

\item Hurkens~\cite{Hurkens1990} established that $\flt(2) = 1 + 2/\sqrt3\sim 2.14$ by showing that the unique (modulo unimodular equivalence) convex $2$-body of maximum lattice width is a certain triangle.  Codenotti and Santos~\cite{CodenottiSantos2020} conjecture that $\flt(3) = 2 + \sqrt2\sim 3.41$, attained by a certain tetrahedron. 
(Averkov et al.~\cite{ACMS} have shown that this tetrahedron is at least a local maximizer of width among all hollow convex $3$-bodies).
Both the Hurkens triangle and the Codenotti-Santos tetrahedron have a unimodular symmetry permuting vertices cyclically.
\end{itemize}

The fact that $\flt(2)$ and $\flt_e(4)$ are attained \emph{only} at symmetric examples is an indication that the role of symmetry in the constructions is not merely a tool to construct relatively complicated objects with simple rules.

\begin{question}
Is it true that in every dimension there are symmetric convex bodies/simplices of width attaining (or close to attaining) $\flt(d)$ and/or $\flt_e(d)$?
In the other direction, can one improve on the known upper bounds for $\flt(d)$ or $\flt_e(d)$ if they are restricted to symmetric instances?
\end{question}

For the second part of this question it might be useful to clarify the exact relation between cyclotomic simplices, circulant simplices, and simplices that admit a symmetry shifting vertices cyclically.  Lemma~\ref{lemma:cyclotomic} is a partial answer, but we do not know the answer to the following:

\begin{question}
\label{q:circulant}
Does every empty lattice simplex with a unimodular symmetry acting cyclically on vertices admit a circulant representation with integer coefficients?
\end{question}

\subsubsection*{Finiteness}
Our lists of cyclotomic simplices in dimensions $4$ and $6$ are most probably complete. Our list in dimension 10 might not be complete, but it should be very close. At any rate, we conjecture that there are finitely many empty cyclotomic $d$- simplices for each prime $d+1$. More generally:

\begin{conjecture}
\label{conj:finite}
If $d+1$ is prime, then there are only finitely many (modulo unimodular equivalence) empty $d$-simplices with a unimodular symmetry acting transitively on vertices.
\end{conjecture}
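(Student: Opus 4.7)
The plan is to encode each such simplex as a vector in a fractional ideal of $\mathbb{Z}[\zeta_{d+1}]$, reduce to finitely many ideal classes with a fundamental domain for the unit action in each, and then use the flatness theorem to cap $|N(v)|$ on the empty ones. For the algebra: because $d+1$ is prime, a Sylow $(d+1)$-subgroup of the symmetry group acts as a $(d+1)$-cycle on the vertices, so we may assume there is a unimodular affine $\phi$ of order $d+1$ cyclically permuting the vertices. Translating the (unique) fixed point $c = \frac{1}{d+1}\sum v_i$ to the origin makes $\phi$ linear. Applying $1+\phi+\cdots+\phi^d$ to any vertex-difference gives zero, so the minimal polynomial of $\phi$ divides $\Phi_{d+1}(x) = 1+x+\cdots+x^d$; since $d+1$ is prime this polynomial is irreducible over $\mathbb{Q}$, and dimensions force the characteristic polynomial of $\phi$ to equal $\Phi_{d+1}$. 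The enlarged lattice $\tilde L := \Lambda + \mathbb{Z}\,c$ is $\phi$-invariant (note $(d+1)c \in \Lambda$) and becomes a rank-one $\mathbb{Z}[\zeta_{d+1}]$-module, i.e.\ a fractional ideal $I \subset \mathbb{Q}(\zeta_{d+1})$. Up to translation the simplex is $\conv(v, \zeta v, \dots, \zeta^d v)$ with $v := v_0 - c \in I$, and $\Lambda$ sits inside $\tilde L$ as the unique $\zeta$-invariant index-$(d+1)$ sublattice, namely $(1-\zeta) I$.

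Next I quotient by the natural equivalences. Minkowski's bound gives finitely many ideal classes of $\mathbb{Z}[\zeta_{d+1}]$, so we may fix $I$ up to isomorphism. Within that class, scaling $v$ by a unit of $\mathbb{Z}[\zeta_{d+1}]$ or by a power of $\zeta$ produces a unimodularly equivalent simplex, so by Dirichlet's unit theorem we may place $v$ in a fundamental domain $F$ whose compact ``direction'' factor is parametrised by the free positive scale $|N(v)|^{1/d}$. For any bound $C$, the set $\{v \in I \cap F : |N(v)| \le C\}$ is a bounded subset of a lattice, hence finite, and the remaining task is to bound $|N(v)|$ on the empty simplices.

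The key computation is a width lower bound. For any $\alpha \in \Lambda^\vee := \{\beta \in \mathbb{Q}(\zeta_{d+1}) : \mathrm{Tr}(\beta \Lambda) \subseteq \mathbb{Z}\}$ the real function $g(t) := \mathrm{Tr}(\alpha\,\zeta^t v)$ on $\mathbb{Z}/(d+1)$ has mean zero and discrete Fourier coefficients $\sigma_k(\alpha v)$ indexed by $k \in (\mathbb{Z}/(d+1))^\times$. Parseval gives $\mathrm{Var}(g) = \sum_k |\sigma_k(\alpha v)|^2$, which AM-GM bounds below by $d\,|N(\alpha v)|^{2/d}$. Combined with $\max_t g - \min_t g \ge \sqrt{\mathrm{Var}(g)}$ and the elementary estimate $|N(\alpha)| \ge N(\Lambda^\vee)$ valid for any nonzero $\alpha \in \Lambda^\vee$, this shows the lattice width of $\Delta$ is at least $c_I\,|N(v)|^{1/d}$ with $c_I := \sqrt{d}\,N(\Lambda^\vee)^{1/d}$ depending only on the ideal class. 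An empty simplex is hollow, so by the flatness theorem its width is at most $w(d)$, giving the cap $|N(v)| \le (w(d)/c_I)^d$ that completes the argument.

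The main obstacle I foresee lies in the width step: the bound $\max-\min \ge \sqrt{\mathrm{Var}}$ and the single AM-GM application lose non-trivial constants, so although the above suffices for the qualitative finiteness statement, a sharper symmetry-adapted version of the flatness theorem would be needed to extract concrete volume bounds (for instance, to corroborate that the computational lists in Section~\ref{sec:cyclotomic} are complete).
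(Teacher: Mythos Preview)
This statement is a \emph{conjecture} in the paper, not a theorem: the authors do not prove it. They only note that the circulant case follows from Ladisch--Sch\"urmann, and that the general case would follow from a positive answer to Question~\ref{q:circulant}. So there is no ``paper's own proof'' to compare against.

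Your argument, however, appears to be a correct proof of the full conjecture. The key steps all check out: the Sylow argument produces an order-$(d+1)$ element acting as a single cycle (using that the action on vertices is faithful and $d+1$ is prime); the minimal polynomial computation forces the $\mathbb{Z}[\zeta_{d+1}]$-module structure; finiteness of the class group and Dirichlet's unit theorem reduce to bounding $|N(v)|$; and the Parseval/AM-GM width bound combined with the flatness theorem gives that bound. The width inequality is the heart of the matter and it is sound: writing $g(t)=\mathrm{Tr}(\alpha\zeta^t v)=\sum_{k\in(\mathbb{Z}/p)^\times}\sigma_k(\alpha v)\,\zeta_p^{kt}$ identifies the Fourier coefficients with the Galois conjugates, Parseval gives $\mathrm{Var}(g)=\sum_k|\sigma_k(\alpha v)|^2\ge d\,|N(\alpha v)|^{2/d}$, and $\max g-\min g\ge\sqrt{\mathrm{Var}(g)}$ holds because the mean is zero.

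One small gap to fill: when you assert that $\Lambda$ has index exactly $d+1$ in $\tilde L$ (so that $\Lambda=(1-\zeta)I$), you are implicitly using that the barycenter $c$ is not a lattice point. This follows from emptiness --- $c$ lies in the interior of $\Delta$ --- but you should say so, since for a general lattice simplex the index could be $1$.

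In short: your approach bypasses Question~\ref{q:circulant} entirely and proves the conjecture directly via the arithmetic of $\mathbb{Q}(\zeta_{d+1})$, which is strictly more than what the paper establishes. Your closing caveat is also well placed: the constants lost in $\max-\min\ge\sqrt{\mathrm{Var}}$ and in the flatness bound make the resulting volume cap far too crude to certify the computational lists in Section~\ref{sec:cyclotomic}.
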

%\benjamin{I think I might not want to formulate the number-theoretic version in the cyclotomic setting as I see now that the conjecture here seems indeed to be a much natural angle to look at things than in the more special cyclic case. Also, as it is already proven in the circulant case, it is not as significant anymore.}

This question and, more generally, finiteness of empty \emph{orbit polytopes} for an arbitrary group acting as permutation of coordinates in $\R^n$, is studied in \cite{HerrRehnSchuermann15, LadishSchuermann}. For example, Theorem 5.1 in \cite{LadishSchuermann} (see also their Example 5.9) implies Conjecture~\ref{conj:finite} for the case of circulant $d$-simplices. (Of course, a positive answer to Question~\ref{q:circulant} would imply the general case from the circulant case).
In fact, our conjecture is a special case of the conjecture stated in the second paragraph of \cite[Section 5]{LadishSchuermann}. 

Necessity of $d+1$ being prime in Conjecture~\ref{conj:finite} comes from the fact that if $d+1$ is composite then the cyclic action factors as an action permuting blocks of coordinates and an action within blocks. By \cite[Theorem 28]{HerrRehnSchuermann15} this implies the existence of infinitely many empty orbit polytopes. 
We show an explicit infinite family for every composite $d+1$ in Example~\ref{exm:nonprime_circulant}. Similar examples are described in \cite[Example 5.24]{RehnThesis} or \cite[Example 2.7]{LadishSchuermann}.

\subsubsection*{Quotient group, cyclicity}
The geometry of a lattice simplex $\Delta$ is closely related to the quotient group $G_\Delta:=\Lambda/\Lambda_\Delta$ of the ambient lattice $\Lambda$ to the lattice $\Lambda_\Delta$ generated by the vertices of $\Delta$. 
This quotient group is a finite abelian group of order equal to the (normalized) volume of $\Delta$. Simplices for which this quotient group is cyclic are called \emph{cyclic simplices} (see Section~\ref{sec:cyclic}).

$G_\Delta$ is, in turn, related to the volume of facets. In particular, part (2) of Proposition~\ref{prop:notcyclic} implies that simplices with a unimodular facet
are cyclic  (but not vice-versa).

\begin{question} 
If \(d\) is even, is $\flt_e(d)$ always attained by a cyclic simplex?
And by a  simplex with all facets unimodular?
\end{question}

Motivation for this question is that cyclotomic simplices have unimodular boundary, and the circulant simplices $S(d,m)$ considered in this paper either have unimodular boundary or have all facets of volume two (in which case their quotient group is isomorphic to $\Z_{N/2} \oplus \Z_2$; see Theorem~\ref{thm:boundary}). In particular, all the simplices in Theorem~\ref{thm:small} are cyclic.
Furthermore, Seb\H{o}'s  even-dimensional empty simplices of width $d-1$ have all facets unimodular.

\subsection*{Organization of the paper} Section~\ref{sec:cyclic} recalls some results about cyclic lattice simplices. Section~\ref{sec:cyclotomic} presents our investigations on empty cyclotomic lattice simplices. Finally, Section~\ref{sec:circulant} contains the construction of empty circulant lattice simplices.

\subsection*{Acknowledgments} 
We thank the Mathematisches Forschungsinstitut Oberwolfach and the organizers of the workshop
\emph{Geometric, Algebraic, and Topological Combinatorics} (August 2019)
where this research was started. 
We also want to thank Johannes Hofscheier for helpful discussions, and an anonymous referee for a very attentive reading and helpful comments and corrections.

\section{Preliminaries on cyclic lattice simplices}
\label{sec:cyclic}

Let $\Lambda\cong \Z^d$ be a lattice in $\R^d$. Recall that a \emph{lattice simplex} is a simplex with vertices in $\Lambda$, that is, $\Delta=\conv(v_0,\dots,v_d)$ where $v_0,\dots,v_d \in \Lambda$ are affinely independent. Two lattice simplices $\Delta$ and $\Delta'$ for respective lattices $\Lambda, \Lambda'$ are \emph{unimodularly equivalent} if there is an affine lattice isomorphism $f:\Lambda \to \Lambda'$ (that is, an affine map sending $\Lambda$ bijectively to $\Lambda'$) such that $f(\Delta) = \Delta'$. We consider lattice polytopes modulo unimodular equivalence, since such equivalence preserves, among other things, the lattice width.
\blue{This means that there is no loss of generality in assuming $\Lambda=\Z^d$, but in some examples it is more convenient to allow for a general $\Lambda$.}

If $p$ is a point in the affine span of a $d$-simplex $\Delta=\conv(v_0,\dots,v_d)$, the vector of \emph{barycentric coordinates} of $p$ with respect to $\Delta$ is the unique $(\alpha_0,\dots,\alpha_d) \in \R^{d+1}$ satisfying
\[
p=\sum_{i=0}^d \alpha_i v_i, \qquad 1=\sum_{i=0}^d \alpha_i.
\]

\blue{
If $\Delta = \conv(v_0,\dots,v_d)$ is a lattice simplex, we denote by $\Lambda_\Delta$ the lattice generated by $\{v_i-v_j : i,j \in\{0,\dots,d\}\}$. We denote $G_\Delta:= \Lambda/\Lambda_\Delta$ and call it the \emph{quotient group} of $\Delta$. It  is a finite abelian group of order equal to the \emph{normalized volume} of $\Delta$, that is, the ratio of the Euclidean volume of $\Delta$ to the Euclidean volume of a unimodular simplex in $\Lambda$. Throughout the paper, we always refer by \emph{volume} to the normalized volume. 

Observe that, modulo unimodular equivalence, there is no loss of generality in assuming $v_0=0$, in which case $\Lambda_\Delta$ is the linear lattice generated by vertices of $\Delta$. 
}

Each facet $F$ of $\Delta$ is itself a lattice simplex. As such, it has its own quotient group $G_F:=(\Lambda \cap \aff(F)) / \Lambda_F$, of order equal to the volume of $F$ (normalized to $\Lambda \cap \aff(F)$).

\begin{proposition}
\label{prop:notcyclic}
If $\Delta$ is a lattice $d$-simplex of volume $N\in \N$, then:
\begin{enumerate}
\item The barycentric coordinates of every lattice point $p\in \Lambda$ have the form
\[
\frac1N(b_0,\dots,b_d)
\]
where $b_0,\dots,b_d \in \Z$ satisfy $\sum_i b_i =N$
and each $b_i$ is a multiple of the normalized volume of the $i$-th facet (the facet where the $i$-th barycentric coordinate vanishes).

\item For each facet $F_i$ of $\Delta$, of volume $N_i$, we have a short exact sequence \[0\to G_{F_i} \to G_\Delta \to \Z_{N/N_i}\to 0.\]
\end{enumerate}
\end{proposition}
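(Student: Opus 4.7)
The plan is to prove both parts by viewing the $i$-th barycentric coordinate of $p$ as a ratio of normalized volumes. For (1), I would use the identity
\[
\alpha_i(p) = \frac{\Vol(\conv(v_0,\dots,v_{i-1},p,v_{i+1},\dots,v_d))}{\Vol(\Delta)},
\]
where both volumes are signed and normalized with respect to $\Lambda$. The denominator equals $N$. Since $p\in\Lambda$, the numerator simplex is a lattice simplex, so its signed normalized volume is an integer $b_i$; thus $\alpha_i=b_i/N$, and summing $\sum_i\alpha_i=1$ gives $\sum_i b_i=N$. For the divisibility claim I would invoke the standard factorization of normalized volume as ``base times lattice height'': the numerator simplex has base $F_i$ (of normalized volume $N_i$ inside $\aff(F_i)$) and apex $p$, so its normalized volume equals $N_i \cdot h_i(p)$, where $h_i(p)\in\Z_{\geq 0}$ is the lattice distance from $p$ to $\aff(F_i)$. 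Hence $|b_i|=N_i\, h_i(p)$, proving $N_i\mid b_i$.

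For part (2), I would translate so that $v_0=0$, giving $\Lambda_\Delta=\Z v_1+\cdots+\Z v_d$, and treat the $i$-th barycentric coordinate as an affine functional $\pi_i\colon\R^d\to\R$. By (1), $\pi_i(\Lambda)\subseteq \frac{N_i}{N}\Z$; the reverse inclusion is realized by any lattice point at lattice distance $1$ from $\aff(F_i)$. Moreover $\pi_i(\Lambda_\Delta)=\Z$ because $\pi_i(v_j)=\delta_{ij}$. Thus $\pi_i$ descends to a surjection $\bar\pi_i\colon G_\Delta \to \tfrac{N_i}{N}\Z/\Z\cong \Z_{N/N_i}$.

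It remains to identify $\ker\bar\pi_i$ with $G_{F_i}$. If $\bar\pi_i(\bar p)=0$, i.e.\ $\alpha_i(p)\in\Z$, then $p-\alpha_i(p)v_i$ lies in $\aff(F_i)\cap\Lambda$ and is congruent to $p$ modulo $\Lambda_\Delta$. So $\ker\bar\pi_i$ is the image of $\aff(F_i)\cap\Lambda$ in $G_\Delta$, which equals $(\aff(F_i)\cap\Lambda)/(\aff(F_i)\cap\Lambda_\Delta)$. The required lemma is $\aff(F_i)\cap\Lambda_\Delta=\Lambda_{F_i}$: for $i\neq 0$ it follows from the linear independence of $v_1,\dots,v_d$ (the coefficient of $v_i$ in the expansion must vanish), while for $i=0$ it follows from the affine constraint on the coefficients picked out by $\alpha_0=0$.

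The main obstacle is purely bookkeeping: keeping straight the ``affine'' and ``linear'' lattices associated with a facet $F_i$, particularly in the case $i=0$ where $F_0$ does not contain the chosen basepoint $v_0=0$. Once the lemma $\aff(F_i)\cap\Lambda_\Delta=\Lambda_{F_i}$ is verified, both well-definedness and injectivity of the map $G_{F_i}\hookrightarrow G_\Delta$ are immediate, completing the short exact sequence.
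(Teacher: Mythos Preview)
Your proposal is correct and follows essentially the same approach as the paper: for part (1) the paper uses the primitive functional $f_i$ vanishing on $F_i$ (your ``lattice height'') together with $f_i(v_i)=N/N_i$, which is exactly your base-times-height formula; for part (2) the paper defines the same surjection $G_\Delta\to\Z_{N/N_i}$ via $b_i/N_i\pmod{N/N_i}$ and identifies its kernel with the image of $\Lambda\cap\aff(F_i)$, just as you do. Your write-up is in fact more careful than the paper's about the injectivity of $G_{F_i}\to G_\Delta$, isolating the needed lemma $\aff(F_i)\cap\Lambda_\Delta=\Lambda_{F_i}$ that the paper leaves implicit.
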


\begin{proof}
Let $\alpha=(\alpha_0, \ldots, \alpha_d)$ be the barycentric coordinates of $p$. Since $G_\Delta$ has order $N$, we have that $\alpha N$ is an integer vector, so that $b_i:=N \alpha_i$ is an integer. We have $\sum_i b_i =N$ because $\sum_i\alpha_i=1$.

Let $F_i$ be the $i$-th facet and $v_i$ the opposite vertex. \blue{For every affine functional $f$ vanishing on $F_i$ we have that $f(\Lambda)$ is an infinite cyclic subgroup of $\R$, so we can normalize it to have $f(\Lambda) = \Z$ and $f(v_i)>0$. We denote $f_i$ be the affine functional so obtained.} The volume $N$ of $\Delta$ equals the volume $N_i$ of $F_i$ times $f_i(v_i)$; hence $f_i(v_i) = N/N_i$. On the other hand, $f_i(p)$ is nothing but the $i$-th barycentric coordinate $\alpha_i$ times $f_i(v_i)$. Thus, in order to have $f_i(p)\in \Z$ we need $\alpha_i$ to be a multiple of $N_i/N$, hence $b_i$ be a multiple of $N_i$.

In part two, the first map is induced by the inclusion $\Lambda \cap \aff(F) \hookrightarrow \Lambda$, and the second map 
sends $\frac1N(b_0,\dots,b_d) \mapsto \frac{b_i}{N_i} \pmod {N/N_i}$. Both the image of the first map and the kernel of the second map consist of the lattice points with $b_i\equiv 0 \pmod N$.
The second map is surjective because (the class of) any $p\in \Lambda$ with $f_i(p)=1$ generates the image.
\end{proof}

\blue{
\begin{definition}
\label{defi:cyclic}
A lattice simplex $\Delta$ is  called \emph{cyclic} if the quotient group $G_\Delta$ is cyclic. 
\end{definition}
}

In what follows we show that a cyclic simplex $\Delta$ can be characterized modulo unimodular equivalence
by the vector of barycentric coordinates of a representative $p\in \R^d$ for a generator of $G_\Delta$. 
For more details about this see, e.~g.,~\cite{IglesiasSantos2020+}.

\begin{proposition}
\label{prop:cyclic}
If $\Delta$ is a cyclic simplex and $p$ is a generator of $G_\Delta$, with barycentric coordinates 
$
\frac1N(b_0,\dots,b_d)
$, then:

\begin{enumerate}
\item $\gcd(N,b_0,\dots,b_d)=1$.

\item The volume of the $i$-th facet of $\Delta$ equals $\gcd(N,b_i)$.
\end{enumerate}
\end{proposition}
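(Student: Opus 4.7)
The plan is to read off both statements from the group-theoretic structure of $G_\Delta$ together with Proposition~\ref{prop:notcyclic}. After translating so that $v_0 = 0$, the lattice $\Lambda_\Delta$ consists precisely of the points in the affine span of $\Delta$ whose barycentric coordinates are all integers; indeed, since $v_0 = 0$, a point $q = \sum_{i=0}^d \alpha_i v_i$ lies in $\Lambda_\Delta$ exactly when each $\alpha_i v_i$ contributes an integer coefficient in the basis $v_1,\dots,v_d$, and the constraint $\sum \alpha_i = 1$ takes care of $\alpha_0$ automatically. Consequently, for any $k \in \Z$, the relation $kp \in \Lambda_\Delta$ is equivalent to $N \mid k b_i$ for every $i$. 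This identification is really the only subtle point; the rest is standard bookkeeping with cyclic groups.

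For part~(1), I would compute the order of the class of $p$ in $G_\Delta$. The subgroup of $\Z_N$ generated by the residues $b_0,\dots,b_d$ is the cyclic subgroup generated by $\gcd(N, b_0, \dots, b_d)$, and it is killed by $k$ exactly when $N \mid k b_i$ for every $i$. Hence the smallest such positive $k$ is $N/\gcd(N, b_0,\dots,b_d)$, and since $\sum_i b_i = N$ the gcd of the $b_i$ already divides $N$, so this simplifies to $N/\gcd(b_0,\dots,b_d)$. Requiring $p$ to generate $G_\Delta$, which has order $N$, forces $\gcd(b_0,\dots,b_d) = 1$.

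For part~(2), I would exploit the short exact sequence $0\to G_{F_i} \to G_\Delta \to \Z_{N/N_i}\to 0$ from Proposition~\ref{prop:notcyclic}(2). By Proposition~\ref{prop:notcyclic}(1) we may write $b_i = N_i c_i$ for some integer $c_i$, and then the given surjection $G_\Delta \twoheadrightarrow \Z_{N/N_i}$ sends the class of $p$ to $c_i \bmod (N/N_i)$. Since $p$ generates $G_\Delta$, its image must generate $\Z_{N/N_i}$, which means $\gcd(c_i, N/N_i) = 1$. Using $N_i \mid N$, we then get $\gcd(N, b_i) = \gcd(N, N_i c_i) = N_i \gcd(N/N_i, c_i) = N_i$, which is the normalized volume of the $i$-th facet, completing the proof.
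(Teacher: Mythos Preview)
Your proof is correct and follows essentially the same approach as the paper: for part~(1) you compute the order of $p$ in $G_\Delta$ via the barycentric-coordinate embedding (the paper phrases this as $G_\Delta \hookrightarrow \frac1N\Z_N^{d+1}$), and for part~(2) you use exactly the paper's argument via the surjection in Proposition~\ref{prop:notcyclic}(2), only spelling out the final $\gcd$ computation $\gcd(N,N_ic_i)=N_i$ that the paper leaves implicit.
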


\begin{proof}
The $\gcd$ has to be 1 in order for  $G_\Delta$ to have order $N$, since part (1) of Proposition~\ref{prop:notcyclic} is actually an embedding of $G_\Delta$ into $\frac1N\Z_N^{d+1}$.

For part (2): since $p$ generates $G_\Delta$, surjectivity in 
Proposition~\ref{prop:notcyclic}(2) implies that its image $\frac{b_i}{N_i} \pmod {\frac{N}{N_i}}$ is a generator of $\Z_{\frac{N}{N_i}}$, thus, $\gcd(\frac{b_i}{N_i}, \frac{N}{N_i})=1$.
\end{proof}

Two vectors $b$ and $b'$ of barycentric coordinates represent the same point in $\Lambda/\Lambda_\Delta$ if, and only if, they differ by an integer vector. This means that the entries $b_i$ are only important modulo $N$, as long as they add up to $N$. In practice, this last condition can be dropped if we remember the value of $N$. That is, a cyclic simplex can be characterized by its volume $N$ and a vector  $b\in \Z_N^{d+1}$ with $\sum b_i \equiv 0\pmod n$, representing the barycentric coordinates of a generator of $\Lambda/\Lambda_\Delta$. 
The following proposition expresses this, and it also gives an explicit coordinatization of every cyclic simplex.

\blue{
\begin{proposition}
\label{prop:cyclic-new}
Let $N\in \N$ and let $b=(b_0,\dots,b_d)\in \Z_N^{d+1}$ with $\sum_i b_i \equiv 0 \pmod N$ and  \(\gcd(N,b_0,\dots,b_d)=1\). 
Let $\Delta(N,b)$ be the standard simplex with vertices $0, e_1, \ldots, e_d$,
 but considered with respect to the lattice $\Lambda(N,b)$ generated by $\Z^d$ and the point $\frac1N (b_1,\dots,b_d)$.
 (Observe that now $\Z^d=\Lambda_{\Delta(N,b)}$). Then,  $\Delta(N,b)$ is a cyclic simplex of volume $N$. Moreover, every cyclic simplex is unimodularly equivalent to one of this form.
\end{proposition}

We call $\Delta(N,b)$, or any lattice simplex unimodularly equivalent to it, a \emph{cyclic $d$-simplex of volume $N$ and with generator $b$}.

\begin{proof}
By definition of $\Lambda$, the quotient group $G_{\Delta(N,b)}$ is cyclic with generator $\frac1N (b_1,\dots,b_d)$, and its size is $N$ thanks to the gcd condition. 

Conversely, given any cyclic simplex $\Delta$ with ambient lattice $\Lambda$ and vertex lattice $\Lambda_\Delta$, 
consider the affine map $f$ sending $v_0$ to the origin and $v_i$ to $e_i$. Let $p$ be a generator of the quotient group $G_\Delta$. By Proposition~\ref{prop:notcyclic} $p=\frac1N b$ for an integer vector $(b_0,\dots,b_d)$. If we denote $b=(b_0, b_1,\dots, b_d)$ we have that the sum of coordinates of $b$ is zero modulo $N$, 
$f(\Delta)$ is the standard simplex, and $f(\Lambda)$ equals $\Lambda(N,b)$.
\end{proof}
}

Our definition of $\Delta(N,b)$ seems to forget $b_0$, but it does not. Since $b_0$ is only important modulo $ N$ and 
$\sum_i b_i \equiv 0 \pmod N$, $b_0$ can be recovered from the rest. 

%We leave the proof of the following result to the reader.

\begin{proposition}
\label{prop:cyclic2}
Let $N,N' \in \N$ and let $b=(b_0,\dots,b_d)$ and $b'=(b'_0,\dots,b'_d)$ be integer vectors. 
$\Delta(N,b)$ and $\Delta(N',b')$ (each considered to its respective lattice $\Lambda(N,b)$ and $\Lambda(N',b')$) are unimodularly equivalent if and only if $N=N'$ and $b'\pmod N$ can be obtained from $b\pmod N$ via multiplication by an integer scalar and permutation of coordinates.
\end{proposition}

\blue{
\begin{proof}
Sufficiency is clear, since $\Delta(N,b) = \Delta(N',b')$ is the standard simplex and the permutation of (barycentric) coordinates sending $\Lambda(N,b)$ to $\Lambda(N',b')$ is an affine map.

For necessity, let $f$ be the unimodular equivalence sending $\Delta(N,b)$ to $\Delta(N',b')$ and $\Lambda(N,b)$ to $\Lambda(N',b')$. Observe that $f$ is completely characterized by the bijection it induces from the vertices of $\Delta(N,b)$ to those of $\Delta(N',b')$. Put differently, $f$ is just a permutation $\sigma$ of barycentric coordinates. On the other hand, if  $p=\frac1N(b_0,\dots,b_n)$ and $p'=\frac1N(b'_0,\dots,b'_n)$ are the generators of the respective quotient groups (again written in barycentric coordinates), we have that, as elements of $G_{\Delta(N',b')}$,  $f(p)$ is a scalar multiple of $p'$. That is, there is a scalar factor $k$ such that $\frac{1}{N} \sigma(b) - \frac{k}{N} b'$ is an integer vector. That is, $\sigma(b) \equiv  k b' \pmod N$.
%Supposing \(\Delta\) and \(\Delta'\) are unimodularly equivalent, let \(T\) be the unimodular transformation which takes \(\Delta\) to \(\Delta'\). By the definition of the \(\Delta\)s, \(T\) is just a permutation matrix. It is clear that \(Tk \frac1N (b_1,\dots,b_d)\) has integer coordinates when \(k=N\). This is the first such integer \(k\) since the \(b_i\) are relatively prime to \(N\). We conclude that \(N\) divides \(N'\). The reverse holds from \(T^{-1}\), so \(N=N'\). Finally, \(T \frac1N (b_1,\dots,b_d)\) is a point in the lattice generated by \(\frac1N (b'_1,\dots,b'_d)\) and the \(e_i\). Writing \(T \frac1N (b_1,\dots,b_d) = k \frac1N (b'_1,\dots,b'_d) + \sum a_i e_i\), we multiply by \(N\) and take the result mod \(N\). The result is \(T (b_1,\dots,b_d) \equiv k (b'_1\dots,b'_d) \pmod N\). Since \(T\) is a permutation matrix, we conclude the statement.
%
%Conversely, we can let \(T\) be the permutation matrix for the permutation of coordinates from \(b\) to \(b'\). This is a unimodular matrix, and takes \(\Delta\) to \(\Delta'\). It also transforms the lattice generated by \(\frac1N (b_1,\dots,b_d)\) and the \(e_i\) to the lattice generated by \(\frac1N (b'_1,\dots,b'_d)\) and the \(e_i\).
\end{proof}
}
%\benjamin{I removed the proof as I think the proof is a bit distracting at this point}

\begin{example}
\label{exm:white}
As proved by White~\cite{White1964} (see also~\cite{Sebo1999}) every empty $3$-simplex is unimodularly equivalent to \[
T(p,N):= \conv \{ (0,0,0), (1,0,0), (0,0,1),(p,N,1) \}
\]
for some $N\in \N$ and $p\in \{1,\dots,N-1\}$, \blue{considered with respect to the integer lattice $\Z^3$.}
$T(p,N)$ is cyclic of volume $N$ and with generator $(N+p,-p,-1,1)$ since
\[
(0,1,0) =  \frac1N \left[ \left(N+p\right) (0,0,0) - p\,(1,0,0)   - (0,0,1) +  (p,N,1)\right].
\]
%If we remember the volume $N$, the generator can be more simply written $(p,-p,-1,1)$.
\end{example}

\begin{example}
\label{exm:4dim}
Barile et al.~\cite{BBBK11} proved that all empty $4$-simplices are cyclic. The enumeration by Iglesias and Santos shows that only 179 of them have width larger than two, namely the ones displayed in~\cite[Table 1]{IglesiasSantos2019}. Among these, the following are particularly interesting:
\begin{itemize}
\item The unique smallest one, of volume 41 and generator $(1,-4, 18, 16, 10)$.
\item The only one of width four (all the others have width three), of volume 101 and generator $(1,-6, -14, -17, 36)$.
\end{itemize}
\blue{We encourage the reader to check that in both cases the entries of $b$ are the fifth roots of unity modulo $N$.}
\end{example}

\section{Cyclotomic simplices}
\label{sec:cyclotomic}

Let $\Delta=\conv(v_0,\dots,v_d) \subset \R^d$ be a cyclic $d$-simplex of volume $N$ with generator $b=(b_0,\dots,b_d)$, and suppose that it has a {\em unimodular cyclic symmetry}, i.e., there is a unimodular affine transformation $f: \R^d\to \R^d$ sending $\Delta$ to itself and inducing a cyclic permutation of vertices. To make things concrete, assume that $f(v_i) = v_{i+1}$ for all $i$ (including $f(v_d) = v_0$; we consider the indices $0,\dots,d$ cyclically modulo $d+1$). 

\begin{lemma}
\label{lemma:cyclotomic}
A cyclic simplex $\Delta$ of a certain volume $N$ admits a unimodular cyclic symmetry if, and only if, it is unimodularly equivalent to the cyclic simplex with generator $(1,k,\dots,k^d)$ for some $k\in \Z_N$ satisfying 
\[
\sum_{i=0}^d k^i\equiv 0 \pmod N.
\]
\end{lemma}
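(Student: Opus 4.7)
The plan is to exploit the cyclic-group structure of $G_\Delta\cong\Z_N$ together with the fact that any unimodular automorphism of $\Delta$ induces an automorphism of $G_\Delta$, which, because $\mathrm{Aut}(\Z_N)=(\Z/N)^\times$, must be multiplication by some unit $k$. The two directions of the iff then reduce to matching this group-theoretic action with the geometric cyclic shift of barycentric coordinates induced by the symmetry.

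For the forward direction, suppose $f$ is a unimodular cyclic symmetry of $\Delta$ with $f(v_i)=v_{i+1}$, and let $p$ be a generator of $G_\Delta$ with barycentric coordinates $(b_0,\dots,b_d)/N$. Because $f$ cyclically permutes vertices, it cyclically permutes barycentric coordinates, so the barycentric coordinates of $f(p)$ are $(b_d,b_0,b_1,\dots,b_{d-1})/N$ modulo $\Z^{d+1}$; on the other hand the induced automorphism of $G_\Delta$ must send $p$ to $kp$ for some $k\in(\Z/N)^\times$, whose barycentric coordinates are $(kb_0,kb_1,\dots,kb_d)/N$. Equating the two yields the recursion $b_{i-1}\equiv k\, b_i\pmod N$, hence $b_i\equiv k^{-i}b_0\pmod N$. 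Since $k$ is a unit, the condition $\gcd(N,b_0,\dots,b_d)=1$ of Proposition~\ref{prop:cyclic} forces $\gcd(N,b_0)=1$; rescaling the generator by $b_0^{-1}\in(\Z/N)^\times$ (which, by Proposition~\ref{prop:cyclic2}, produces a cyclic simplex unimodularly equivalent to $\Delta$) gives the new generator $(1,k^{-1},\dots,k^{-d})$. The identity $\sum_i k^{-i}\equiv 0\pmod N$ then follows from $\sum_i b_i\equiv 0\pmod N$, and renaming $k^{-1}$ as $k$ yields the stated form.

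For the converse, starting from a cyclic simplex $\Delta$ with generator $(1,k,\dots,k^d)$ and $\sum_{i=0}^d k^i\equiv 0\pmod N$, I would first note that multiplying the sum relation by $k-1$ gives $k^{d+1}\equiv 1\pmod N$, so $k$ is a unit. The candidate symmetry is the unique affine map $f$ sending $v_i\mapsto v_{i+1}$; since it permutes a set of affinely-spanning lattice vertices, it already preserves $\Lambda_\Delta$, and preservation of the larger lattice $\Lambda$ reduces to checking $f(p)\in\Lambda$ for the generator $p$ of $G_\Delta$. That in turn follows from the identity $k^d\cdot(1,k,\dots,k^d)\equiv(k^d,1,k,\dots,k^{d-1})\pmod{N\Z^{d+1}}$, which uses only $k^{d+1}\equiv 1$: it says precisely that $f(p)\equiv k^d\,p\pmod{\Lambda_\Delta}$, and hence lies in $\Lambda$.

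The only real subtlety I anticipate is the orientation convention: depending on whether one normalizes with $f(v_i)=v_{i+1}$ or $f(v_i)=v_{i-1}$, the forward direction delivers $(1,k,\dots,k^d)$ versus $(1,k^{-1},\dots,k^{-d})$. This is harmless because $k^{d+1}\equiv 1$ makes $k$ and $k^{-1}$ satisfy the same sum-of-powers relation, so only the \emph{existence} of a suitable $k$ is asserted. Beyond this, the argument is essentially bookkeeping with barycentric coordinates, remembering that two integer vectors representing barycentric coordinates are identified exactly when they differ by an element of $\Z^{d+1}$.
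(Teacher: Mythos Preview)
Your proposal is correct and follows essentially the same route as the paper's proof: in both directions you use that a unimodular symmetry of $\Delta$ induces an automorphism of $G_\Delta\cong\Z_N$, hence multiplication by a unit, and then match this with the cyclic shift of barycentric coordinates. The only cosmetic difference is the naming convention (the paper calls the unit $r$ and sets $k=r^{-1}$ so that $b_i\equiv k^ib_0$ directly, whereas you call the unit $k$ and then rename $k^{-1}\mapsto k$ at the end), a point you already anticipated and correctly dismissed as harmless.
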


\begin{proof}
Let $p$ be a generator of the quotient group $G_\Delta:=\Lambda/\Lambda_\Delta$ and let $\frac1N(b_0,\dots,b_d)$ be its barycentric coordinates.
Observe that every unimodular transformation preserving $\Delta$ induces an automorphism of the quotient group $G_\Delta:=\Lambda/\Lambda_\Delta$. Since we assume $G_\Delta \cong \Z_N$, such automorphisms equal multiplication by a unit $r\in \Z_N$. Hence:

If $f$ is a  unimodular cyclic symmetry, then the barycentric coordinates of $f(p)$ are $\frac1N(b_d,b_0,\dots,b_{d-1})$, so that $(b_d,b_0,\dots,b_{d-1}) \equiv r(b_0,\dots,b_{d}) \pmod N$. Put differently, $b_{i+1}\equiv kb_i\pmod N$ for all $i$ 
(including $b_0\equiv kb_d$), where $k$ is the inverse of $r$ modulo $N$.
Hence $b_i \equiv k^i b_0 \pmod N$ for all $i$.

Now, $b_0$ must be coprime with $N$, because if there is a prime $p$ dividing $\gcd(b_0, N)$ then $p$ divides every $b_i \equiv k^ib_0 \pmod N$ too, which contradicts Proposition~\ref{prop:cyclic}. 
Multiplication of  the vector $b$ by the inverse of $b_0$ modulo $N$ shows that $\Delta$ is genreated by $(1,k,\dots,k^d)$.

Conversely, suppose that $\Delta$ is generated by $(1,k,\dots,k^d)$. Then the affine map sending $v_i$ to $v_{i+1}$ for every $i$ sends the generator $p$ with barycentric coordinates $\frac1N(1,k,\dots,k^d)$ to a point $p'$ with barycentric coordinates $\frac1N(k^d, 1, k,\dots,k^{d-1})$. Since these coordinates equal $k^d$ times the coordinates of $p$ (because $1+k+\dots+k^d\equiv 0\pmod N$ implies $k^{d+1}\equiv 1 \pmod N$) we have that $p'$ is a lattice point in $\Lambda$. Hence, $f$ is a unimodular cyclic symmetry for $\Delta$.
\end{proof}

As mentioned in the proof, the condition $\sum_{i=0}^d k^i\equiv 0 \pmod N$ implies 
$k^{d+1}\equiv 1 \pmod N$. (The converse holds if $N$ is prime and $k\ne 1$, but not in general).
That is, the generator of a cyclic simplex with unimodular cyclic symmetry is the vector of powers of a \emph{$(d+1)$-th root of unity} modulo $N$. 
A priori we do not need $k$ to be a primitive root; but if $k$ is not primitive then the simplex has small width, as we now show:

\begin{corollary}
\label{coro:cyclotomic}
In the conditions of Lemma~\ref{lemma:cyclotomic}, 
\begin{enumerate}
\item If the width of $\Delta$ is more than two, then $k$ is a primitive $(d+1)$-th root of unity modulo $N$.

\item If $N$ is prime and the width of $\Delta$ is more than one, then $d+1$ is prime.
\end{enumerate}
\end{corollary}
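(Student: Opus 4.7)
The plan is to encode the width of $\Delta$ via subset-sum conditions on the generator and then read off short relations from the multiplicative order of $k$ in $\Z_N$. By Proposition~\ref{prop:notcyclic}(1) together with Definition~\ref{defi:cyclic}, an integer vector $c=(c_0,\ldots,c_d)$ defines an affine lattice functional on $\Delta$ precisely when $\sum_i c_i b_i \equiv 0 \pmod{N}$; here $b_i = k^i$, the induced width of $\Delta$ under this functional is $\max_i c_i - \min_i c_i$, and the functional is non-constant iff the $c_i$ are not all equal. So an upper bound on the width of $\Delta$ is obtained by exhibiting non-constant $c$ of small spread annihilating $\sum c_i k^i$ modulo $N$.

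For part (1), suppose $k$ is not primitive and let $e$ be its multiplicative order modulo $N$, so $1\le e < d+1$. Taking $c_0 = 1$, $c_e = -1$, and $c_i = 0$ otherwise gives $\sum c_i k^i = 1 - k^e \equiv 0 \pmod{N}$; the functional is non-constant and has width $2$, so $\Delta$ has width at most $2$. The contrapositive gives (1).

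For part (2), assume $N$ is prime and $d+1$ is composite; I will produce a non-constant $c \in \{0,1\}^{d+1}$ with $\sum c_i k^i \equiv 0 \pmod{N}$, forcing width at most $1$. If $k = 1$ in $\Z_N$, then $\sum_i b_i = d+1 \equiv 0 \pmod{N}$ already forces $N \mid d+1$, and compositeness gives $d+1 > N$, so any $c \in \{0,1\}^{d+1}$ with exactly $N$ ones works. If $k \ne 1$, let $e'$ be the order of $k$, so $2 \le e' \le d+1$ and $e' \mid d+1$. When $e' < d+1$, take $c_i = 1$ for $0 \le i < e'$ and $c_i = 0$ otherwise; then $\sum c_i k^i = (k^{e'}-1)/(k-1) \equiv 0 \pmod{N}$, using primality of $N$ to invert $k - 1$. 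When $e' = d+1$ (i.e.\ $k$ is primitive), compositeness supplies a proper divisor $\ell$ of $d+1$ with $1 < \ell < d+1$; setting $m = (d+1)/\ell$ and $c_i = 1$ exactly on $\{0, \ell, 2\ell, \ldots, (m-1)\ell\}$ gives $\sum c_i k^i = (k^{m\ell}-1)/(k^\ell - 1) \equiv 0 \pmod{N}$, since $k^\ell \ne 1$ (as $\ell < d+1$ and $k$ is primitive) while $k^{m\ell} = k^{d+1} = 1$. In each case $c$ is non-constant, so $\Delta$ has width at most $1$.

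The only delicate point is the division in the geometric sum: for general (non-prime) $N$ the identity $(k-1)\sum_{j<e'} k^j = k^{e'}-1$ need not force $\sum_{j<e'} k^j \equiv 0 \pmod N$ from $k^{e'} \equiv 1 \pmod N$, which is exactly why part (1) only yields width at most $2$ and cannot be sharpened to width at most $1$ without further assumptions on $N$. The stray case $k=1$ in part (2) requires a separate pigeonhole argument, but is the only one where a non-trivial congruence other than a truncated geometric series is needed.
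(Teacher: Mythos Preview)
Your proof is correct and follows the same approach as the paper's: encode affine lattice functionals by their values at the vertices, use the integrality criterion $\sum_i c_i k^i \equiv 0 \pmod N$, and exhibit short $\{0,\pm1\}$-valued functionals coming from roots-of-unity identities. Your case analysis in part~(2) is in fact more careful than the paper's, which jumps directly to the subset of indices divisible by a proper divisor $m$ of $d+1$ and asserts that $1,k^m,\dots,k^{(n-1)m}$ ``are the roots of $x^n-1$'' without separately treating the degenerate cases where $k$ has small order (in particular $k=1$ or $k^m=1$); your explicit handling of these cases closes that gap.
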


\begin{proof}
If $k$ is a nonprimitive $(d+1)$-th root of unity modulo $N$ then the generating vector $(1,k,\dots,k^d)$ has repeated entries. Then, the functional taking values $+1$ and $-1$ in a pair of vertices corresponding to repeated entries, and zero in all other vertices, is an integer functional (because it has integer value in the generator $\frac1N(1,k,\dots,k^d)$ of the quotient group) and it gives width two to the simplex.

For part (2), assume that $N$ is prime but $d+1$ is not. Consider a factorization $d+1=mn$ with $m,n>1$, and let $k$ be a $(d+1)$-th root of unity modulo $N$, so that $\Delta$ is generated by $b=(1,k,k^2,\dots,k^d)$. 
Consider the affine functional $f$ that takes value $1$ at the $i$-th vertex if $i$ is a multiple of $m$ and $0$ otherwise.  The powers of $k^m$ in $b$ add up to zero, because they are the roots of the polynomial $x^n-1$. Thus, $f$ takes an integer value in the generator $\frac1N(1,k,\dots,k^d)$ of the quotient group and hence, it is an integer functional giving width one to the simplex.

Remark: in both parts we use the following criterion for a functional $f$, expressed as a vector in $\Z^{d+1}$ that lists
the values of $f$ on vertices of a cyclic simplex, to be an integer functional: $f$ is integer if and only if  $f\cdot b \equiv 0\pmod N$, where $b$ is the generator of $\Delta$ as a cyclic simplex. This will be explained in more detail after Algorithm~\ref{alg:width}.
\end{proof}

In what follows we assume, for simplicity, that $N$ is prime. Hence, the multiplicative group of $\Z_N$ is cyclic of order $N-1$, and $(d+1)$-th roots of unity exist if and only if $d+1$ divides $N-1$. In this case, there are exactly $d+1$ such roots, and they are the powers of a primitive root $k$. Moreover, since $(d+1)$-th roots are the zeroes of the polynomial  $x^{d+1}-1 = (x-1)(1+x+\dots+x^d)$, $k$ satisfies the extra condition $\sum_{i=0}^d k^i\equiv 0 \pmod N$ of Lemma~\ref{lemma:cyclotomic}.

\begin{definition}
\label{defi:cyclotomic}
Let $d,N$ be two positive integers with $N$ prime and $d+1$ a divisor of $N-1$.
Let $a_0,\dots,a_d$ be the $(d+1)$-th roots of unity modulo $N$, that is, the powers of any primitive $(d+1)$-th root.

We call \emph{cyclotomic simplex} of parameters $(d,N)$ the cyclic simplex of volume $N$ with generator
$(a_0,\dots,a_d)$. We  denote it $\Cyc(d,N)$.
\end{definition}

%\blue{
%Lemma~\ref{lemma:cyclotomic} and Corollary~\ref{coro:cyclotomic} imply the following:
%
%\begin{corollary}
%\label{coro:cyclotomic2}
%Every cyclic simplex of width more then two, prime volume and with a unimodular cyclic symmetry equals $\Cyc(d,N)$ for some $d$ and $N$.
%\end{corollary}
%}

\begin{example}
\label{exm:4dim_cont}
The empty $4$-simplices $(1,-4, 18, 16,10)$ and $(1,-6, -14, -17, 36)$ of Example~\ref{exm:4dim} are cyclotomic. Indeed, they have volumes 41 and 101 and we have that 
\centerline{$(1,-4, 18, 16,10) \equiv (1,-4, -4^3, 4^2,4^4) \pmod{41}$, with $-4^5\equiv 1 \pmod{41}$.}
\centerline{$(1,-6, -14, -17, 36)\equiv (1,-6, -6^3, 6^4,6^2) \pmod{101}$, with $-6^5\equiv 1 \pmod{101}$.}

The fact that the smallest empty 4-simplex of width three and the unique empty 4-simplex of width four are cyclotomic seems to indicate that, although quite specific, the cyclotomic construction is useful in order to find wide empty simplices.
\end{example}

The following statement summarizes Lemma \ref{lemma:cyclotomic} and Corollary \ref{coro:cyclotomic} in the prime \(N\) setting:

\begin{corollary}
\label{coro:cyclotomic2}
Let $\Delta$ be a cyclic $d$-simplex of prime volume $N$. $\Delta$ admits a unimodular cyclic symmetry if and only if  $d+1$ divides $N-1$ and $\Delta$ is unimodularly equivalent to the cyclotomic simplex $\Cyc(d,N)$. 

For $\Delta$ to have width more than one, $d+1$ must be prime, too.
\end{corollary}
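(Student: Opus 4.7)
My plan is to derive this corollary by specializing Lemma \ref{lemma:cyclotomic} to the case of prime $N$ and reading off the final sentence from Corollary \ref{coro:cyclotomic}(2). The key elementary ingredient is that for prime $N$ the multiplicative group $\Z_N^*$ is cyclic of order $N-1$, so it contains an element of order exactly $d+1$ (a primitive $(d+1)$-th root of unity) if and only if $d+1 \mid N-1$.

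For the ($\Leftarrow$) direction, the hypothesis $d+1 \mid N-1$ guarantees a primitive $(d+1)$-th root of unity $k\in \Z_N^*$, so $\Cyc(d,N)$ is well defined with generator $(1,k,\dots,k^d)$. Because $k\ne 1$ but $k^{d+1}\equiv 1\pmod N$, the element $k$ is a root of $1+x+\dots+x^d = (x^{d+1}-1)/(x-1)$, so $\sum_{i=0}^{d}k^i\equiv 0\pmod N$ and Lemma \ref{lemma:cyclotomic} supplies a unimodular cyclic symmetry of $\Cyc(d,N)$, which is preserved under unimodular equivalence. For the ($\Rightarrow$) direction, Lemma \ref{lemma:cyclotomic} together with Proposition \ref{prop:cyclic2} presents $\Delta$ as the cyclic simplex with generator $(1,k,\dots,k^d)$ for some $k\in \Z_N$ with $\sum k^i\equiv 0\pmod N$; multiplying this relation by $k-1$ in $\Z_N$ yields $k^{d+1}\equiv 1\pmod N$, so $k$ is a $(d+1)$-th root of unity in $\Z_N^*$. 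Taking $k$ to be primitive then forces $d+1 \mid N-1$, and its powers $1,k,\dots,k^d$ enumerate exactly the $d+1$ distinct $(d+1)$-th roots of unity modulo $N$, matching the generator of $\Cyc(d,N)$; Proposition \ref{prop:cyclic2} concludes that $\Delta \cong \Cyc(d,N)$. The final sentence, that $d+1$ must also be prime whenever $\Delta$ has width greater than two, is a direct quotation of Corollary \ref{coro:cyclotomic}(2).

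The main obstacle is justifying that the root $k$ arising in the forward direction is indeed \emph{primitive}: any $(d+1)$-th root of unity $k$ of intermediate order $m$ with $1<m<d+1$ and $m\mid d+1$ also satisfies $\sum k^i\equiv 0$ by the same polynomial argument, so \emph{a priori} such $k$ would deliver a cyclic symmetry without forcing $d+1 \mid N-1$. The resolution is to invoke the functional argument in the proof of Corollary \ref{coro:cyclotomic}(1): a non-primitive $k$ makes the entries of $(1,k,\dots,k^d)$ repeat, and the $\pm 1$ functional supported on a repeated pair is integral and bounds the width of $\Delta$ by $2$. Consequently the degenerate non-primitive case is invisible to the corollary once one keeps its second sentence in mind, and only primitive $k$, i.e.\ the situation $d+1 \mid N-1$ with $\Delta \cong \Cyc(d,N)$, produces simplices of the kind addressed by the statement.
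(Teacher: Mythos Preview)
Your approach matches the paper's, which simply presents the corollary as a summary of Lemma~\ref{lemma:cyclotomic} and Corollary~\ref{coro:cyclotomic} without a separate proof. Your $(\Leftarrow)$ direction and the final sentence are handled correctly.

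However, the gap you identify in the forward direction is real, and your proposed ``resolution'' does not close it. The first sentence of the corollary is an unconditional biconditional with no width hypothesis, so you cannot dispose of the non-primitive case by observing that such simplices have width at most two---that observation is simply irrelevant to the truth of the first sentence. Concretely, take $N=7$ and $d=3$: then $d+1=4$ does not divide $N-1=6$, so $\Cyc(3,7)$ is not even defined; yet the cyclic $3$-simplex of volume $7$ with generator $(1,6,1,6)$ (here $k\equiv-1$, of order $2$) satisfies the condition of Lemma~\ref{lemma:cyclotomic} and therefore admits a unimodular cyclic symmetry. Even when $d+1\mid N-1$, a non-primitive root $k$ produces a generator with repeated entries, which by Proposition~\ref{prop:cyclic2} cannot be unimodularly equivalent to $\Cyc(d,N)$, whose generator has $d+1$ distinct entries. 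So the forward implication, read literally, fails. What \emph{is} true---and what you have essentially argued---is that the forward direction holds once one also assumes width greater than two (equivalently, once the $k$ supplied by Lemma~\ref{lemma:cyclotomic} is primitive, via Corollary~\ref{coro:cyclotomic}(1)). Your analysis is correct in spirit: you have located exactly where the statement is loose. But your final paragraph should be phrased as a caveat about the statement rather than as a resolution of the proof; the phrase ``taking $k$ to be primitive'' in your second paragraph is not a step you are entitled to, and the width argument does not retroactively license it.
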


\begin{remark}
\label{rem:primes}
Strictly speaking, the simplices we are interested in can be defined via the existence of a \emph{principal root of unity} (a root of unity with the property $1+k+\dots+k^d \equiv 0 \pmod N$ of Lemma~\ref{lemma:cyclotomic}) without the primality assumption on $N$ or $d+1$. 
Since we want the simplex to be wide, we also want $k$ to be a primitive root of unity (Corollary~\ref{coro:cyclotomic}). These conditions imply that $d+1$ divides the Euler value $\phi(N)$, but the converse is not true. 

Consider for example $N=15$, so that  $\phi(N)=8$. We have that: (a) Even though $8$ divides $\phi(N)$, there are no primitive $8$-th roots, since the group of units modulo 15 is isomorphic to $\Z_4\oplus \Z_2$. (b) There are $4$-th primitive roots, and they come in two orbits. The powers of $-2$ do not add up to 0 (modulo 15) but the powers of $2$ do.

Thus, we could extend our definition of cyclotomic simplices to admit the existence of $\Cyc(4,15)$ taking $k=2$. But if we admit $N$ not to be prime it is no longer true that  $\Cyc(d,N)$ is unique, whenever it exists. See Remark~\ref{rem:primes2}.
%
%\Paco{
%I have not found a reference but I think it is easy to prove that: ``for given $N$ and $d+1$, assuming $d+1$ prime but $N$ perhaps not prime, principal $d+1$-th roots of unity modulo $N$ exist if and only if all prime factors of $N$ are either $d+1$ (with multiplicity one) or equal to 1 modulo $d+1$ (with arbitrary multiplicity)". 
%
%It is also easy to check that in order for $\Cyc(d,N)$ to be empty one needs $\Cyc(d,p)$ empty for all divisors of $N$}
\end{remark}

\subsection*{Algorithmic remarks}

We have computationally explored cyclotomic $d$-simplices for $d=4, 6, 10$. 
Our method is:
\begin{enumerate}
\item We go through all the primes $N$ with $N-1$ a multiple of $d+1$, in a range that we set appropriately in each case.

\item For each such $N$, we find the $(d+1)$-th roots of unity modulo $N$ and construct the cyclotomic simplex $\Cyc(d,N)$.

\item We check whether $\Cyc(d,N)$ is empty and compute its width.
\end{enumerate}

Steps (1) and (2) are computationally straightforward. (In step (2) we can use brute force since step (3) is much more expensive). Let us discuss how we check emptyness and compute width of a cyclic simplex:

\subsubsection*{Checking emptyness} In barycentric coordinates dilated by $N$, all lattice points are multiples modulo $N$ of the generator $b=(b_0,\dots,b_d)$. If such a multiple $j b\in \Z_N^{d+1}$ has a representative that lies in the simplex, then that representative has all barycentric coordinates in $\{0,\dots,N-1\}$. 
Equivalently, the unique representative of $j b\in \Z_N^{d+1}$ with coordinates  in $\{0,\dots,N-1\}$ must have sum of coordinates equal to $N$, so that it represents the barycentric coordinates of a point. This implies:

\begin{proposition}
\label{prop:emptyness}
The cyclic simplex of volume $N$ and generator $b=(b_0,\dots,b_d)\in \Z_N^{d+1}$ is empty if, and only if, for every $j \in \{1,\dots, N-1\}$ reducing $j b$ modulo $N$ so that it lies in $\{0,\dots,N-1\}^{d+1}$ yields a vector whose sum of coordinates differs from $N$.
\end{proposition}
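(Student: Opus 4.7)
The plan is to parametrize the lattice points of $\Lambda$ lying in $\Delta$ by their class in $G_\Delta\cong\Z_N$ and show that each nonzero class contributes at most one such point, governed exactly by the stated condition. I rely on Proposition~\ref{prop:notcyclic}(1): every lattice point of $\Lambda$ has barycentric coordinates of the form $\frac{1}{N}(a_0,\dots,a_d)$ with $a_i\in\Z$ and $\sum_i a_i=N$, and the reduction $(a_0,\dots,a_d)\bmod N$ identifies the class in $G_\Delta$. Since $G_\Delta$ is cyclic with generator $b$, every such tuple satisfies $(a_0,\dots,a_d)\equiv jb\pmod N$ for a unique $j\in\{0,\dots,N-1\}$.

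The core step is to fix $j\in\{1,\dots,N-1\}$, let $r\in\{0,\dots,N-1\}^{d+1}$ be the reduction of $jb$ modulo $N$, and write any candidate representative of the class $jb$ as $a=r+Nc$ with $c\in\Z^{d+1}$. Since $0\le r_i<N$, the requirement $a_i\ge 0$ (i.e.\ that the point lies in $\Delta$) is equivalent to $c_i\ge 0$. Because $\sum_i b_i\equiv 0\pmod N$, the sum $\sum_i r_i$ is a nonnegative integer multiple $kN$ of $N$, and the barycentric constraint $\sum_i a_i=N$ becomes $\sum_i c_i=1-k$. The coprimality hypothesis $\gcd(N,b_0,\dots,b_d)=1$ from Definition~\ref{defi:cyclic} makes $b$ of order exactly $N$ in $\Z_N^{d+1}$, so $r\neq 0$, forcing $k\ge 1$. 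Combined with $c_i\ge 0$, the equation $\sum_i c_i=1-k$ is then solvable if and only if $k=1$, in which case $c=0$ is the unique solution. Thus the class $jb$ contributes a lattice point of $\Delta$ if and only if $\sum_i r_i=N$, and in that case the lattice point is unique, with barycentric coordinates $\frac{1}{N}r$.

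To finish, I would observe that the class $j=0$ comprises exactly the lattice points of $\Lambda_\Delta$ in $\Delta$: their scaled barycentric coordinates are nonnegative integer vectors summing to $N$ that are divisible by $N$, namely the standard basis vectors, giving precisely the vertices $v_0,\dots,v_d$. Combining the two cases, $\Delta$ has no lattice point beyond its vertices if and only if no $j\in\{1,\dots,N-1\}$ satisfies $\sum_i r_i=N$, which is the claim. The main subtlety, though minor, is the case analysis on $k$: using the coprimality condition to exclude $k=0$ for $j\neq 0$, and using $c_i\ge 0$ to exclude $k\ge 2$; together these pinpoint $k=1$ as the only case producing a lattice point.
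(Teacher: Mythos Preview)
Your proof is correct and follows essentially the same approach as the paper, which merely sketches the argument in the paragraph preceding the proposition: lattice points correspond to classes $jb\in\Z_N^{d+1}$, and such a class meets the simplex precisely when its unique representative in $\{0,\dots,N-1\}^{d+1}$ has coordinate sum $N$. Your version simply fills in the details the paper leaves implicit, in particular the case analysis on $k$ that pins down uniqueness of the representative and the explicit treatment of the $j=0$ class as the vertices.
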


\subsubsection*{Computing width}
Computing the width of a general lattice simplex can be formulated as an integer linear minimization problem. For simplices, however, a simple exhaustive algorithm is as follows:

\begin{algorithm} \ 
\label{alg:width}

\noindent INPUT:  a cyclic $d$-simplex $\Delta$ given by its generator $b=(b_0,\dots,b_d)\in \Z^{d+1}$ and its volume $N$, plus a target width $w\in \N$.

\noindent
OUTPUT: if $\Delta$ has lattice width $\le w$, an integer linear functional attaining that width. If not, the information that such a functional does not exist.

Step 1) For each $f\in \{0,\ldots,w\}^{d+1}$, compute $f\cdot b $. If $f\cdot b \in N\Z$ then add $f$ to the list of functionals giving width $\le w$ to $\Delta$ and keep $\width_f(\Delta) = \max\{f_i\} - \min\{f_i\}$.

Step 2) If a functional is found in Step 1, return one of minimum width. If not, report that the width of $\Delta$ is greater than $w$.
\end{algorithm}

The algorithm works because of the following: $\Delta$ has lattice width $\le w$ if, and only if, there is an integer affine functional taking values in $\{0,\ldots,w\}$ at all the vertices of $\Delta$. In the algorithm, we represent each affine functional $f$ via its list of values at the vertices of $\Delta$, and look at all those (integral or not), with values in $\{0,\ldots,w\}$. Each such functional is integer (on the affine lattice $\Lambda$ generated by our vertices and the point of barycentric coordinates $\frac1Nb$) if, and only if, its value at the generator $\frac1Nb$ is an integer; that is, if its value at $b$ is in $N\Z$.

This algorithm is very expensive (there are $(w+1)^{d+1}$ functionals to check, and we typically set $w \sim d+1$) but we use several tricks to speed it up:

\begin{itemize}

\item We can assume that $f$ has some entry equal to 0.  
By cyclic symmetry of our simplex, there is no loss of generality in assuming that entry to be  $f_0$, so that we only iterate over $d$ entries of $f$, instead of $d+1$.

\item For the other $d$ coordinates, we use ``bidirectional search'' in order to reduce our search space from size $(w+1)^d$ to $(w+1)^{d/2}$.% 
\footnote{Our $d$ is always even. If it were not, the same trick works by splitting the $d$ coordinates into $\lceil d/2\rceil + \lfloor d/2\rfloor$.}
For this, each functional $f\in  \{0,\ldots,w\}^{d}$ is decomposed as a sum $f_1 + f_2$ with $f_i \in \{0,\ldots,w\}^{d/2}$, where $f_1$ acts on the first $d/2$ entries of $b$ and $f_2$ and the last $d/2$. We compute two lists $L_1$ and $L_2$, with $L_i$ containing the scalar products of all the $f_i$ with the corresponding half of $b$, and we then check whether there is a value in $L_1$ and a value in $L_2$ that add up to $0$ modulo $N$. The latter can be done in time proportional to $\max_{i=1,2} |L_i| \log |L_i| \in O((w+1)^{d/2}d\log w)$ by first sorting the lists $L_1$ and $L_2$.

\item As a practical matter of implementation, we can give up finding a representative \(f\) and instead only output the existence or non-existence of some functional giving width \(w\). This allows parallel computation of the \(L_i\) by keeping a list of all values of the form \(\sum_{i=1}^k a_if_i \pmod N\) with \(k\) incrementing from \(1\). In each step, only the previous list needs to be stored, and to each value in this list, \(w\) additions need to be performed. These lists can be kept sorted, at cost linear in their length. This slightly improves the complexity to \( O((w+1)^{d/2})\), which is both the sorting cost, and the count of additions performed when \(k=d/2\).
\end{itemize}

\subsection*{Experimental results}

We have explored cyclotomic simplices for $d=4,6,10$ with prime $N$ up to $2^{31} \simeq 2.14{\cdot} 10^9$, finding:

\begin{itemize}

\item For $d=4$ there are only four empty cyclotomic simplices: the two from Example~\ref{exm:4dim_cont} plus another two, of volumes 11 and 61. They are described in Table~\ref{table:44}.

\begin{table}
\small
\[
\begin{array}{c|ccc}
\text{simplex}& N \text{ (volume)} & b\pmod N & \text{width}\\
\hline
\Cyc(4, 11) &11 & (1, -2, 4, -8, 5) & 2 \\
\Cyc(4, 41) &41 & (1,-4, 16, 18, 10) &3 \\
\Cyc(4, 61) &61 & (1,-3, 9, -27, 20)  & 3\\
\Cyc(4, 101) &101 & (1,-6, 36, -14, -17) & 4\\
\end{array}
\]
\caption{The four empty cyclotomic 4-simplices. $\Cyc(4, 101)$ is the unique empty $4$-simplex of maximum volume (see~\cite{IglesiasSantos2019}).}
\label{table:44}
\end{table}

\item For $d=6$ there are 88 empty cyclotomic simplices. Their volumes range from 
29 to 17683. Their maximum lattice width is 6, attained by the six simplices in Table~\ref{table:666}.
Table~\ref{table:6stats} shows additional statistics of cyclotomic $6$-simplices, both empty and non-empty.

\begin{table}
\small
\[
\begin{array}{c|ccc}
\text{simplex}& N \text{ (volume)} & b\pmod N \\%& \text{width}\\
\hline
\Cyc(6, 6301) &6301 & (1, 4073, 5097, 4587, 386, 3229, 1530) \\%& 6\\
\Cyc(6, 10753) &10753 & (1, 8246, 5297, 376, 3632, 2367, 1587)  \\%& 6\\
\Cyc(6, 11117) &11117 & (1, 6165, 9319, 10096, 8874, 1453, 8560)  \\%& 6\\
\Cyc(6, 15121) &15121 & (1, 9543, 10187, 1632, 14667, 7205, 2128)  \\%& 6\\
\Cyc(6, 16493) &16493 & (1, 3665, 6923, 6561, 15764, 81, 16484)  \\%& 6\\
\Cyc(6, 17683) &17683 & (1, 12135, 11884, 7475, 13018, 11191, 15028)  \\%& 6\\
\end{array}
\]
\caption{The six empty cyclotomic 6-simplices of width 6. As mentioned in Remark~\ref{rem:primes2}, extending the search to non-prime volumes we have found a seventh empty $6$-simplex of width six, with volume $6\,931$.
}
\label{table:666}
\end{table}

\begin{table}
\small
\[
\begin{array}{c|ccc}
N& \text{\# empty} & \text{\# not-empty}  \\
\hline
\ \ \ \ \ \ [0,\ \ 1999] & 40  & 5    \\    % 26+14, 2+3
\ \ [2000,\ \ 3999] &  21 & 23    \\  %12+9, 13+10
\ \ [4000,\ \ 5999] &  10 & 27    \\  % 6+4, 15+12
\ \ [6000,\ \ 7999] &  8 & 31    \\ %
\ \ [8000,\ \ 9999] &  3 & 35   \\  %  
\ [10000,11999] & 2  & 37    \\ % 84
\ [12000,13999] & 1 &  29  \\ %85
\ [14000,15999] & 1 &  34  \\ %85
\ [16000,17999] & 2  & 32    \\ % 88
\end{array}
\qquad\qquad
\begin{array}{c|ccc}
\text{width} &\text{simplex}& \text{empty?}\\
\hline
1& \text{none}  & \text{---}       \\
2&\Cyc(6, 29) &  \text{yes}      \\  
3&\Cyc(6, 127) &  \text{no}      \\ 
4&\Cyc(6, 701) & \text{yes}     \\ 
5&\Cyc(6, 3347) & \text{no}     \\    
6&\Cyc(6, 6301) & \text{yes}       \\ 
7&\Cyc(6, 14197) & \text{no}      \\ 
8&\Cyc(6, 32369) & \text{no}      \\  
\end{array}
\]
\caption{Statistics on cyclotomic 6-simplices. Left: number of empty and non-empty cyclotomic simplices for each interval of volume. Right: smallest cyclotomic simplex for each width}
\label{table:6stats}
\end{table}

\item For $d=10$ we have found $218\,075$ empty cyclotomic simplices. Their number decreases very fast with volume, as seen in Figure~\ref{fig:bars}. For example, there are only 30 of volume above $10^9$ and only one above $15{\cdot}10^8$, of volume $1\,757\,211\,061$ and width 10.
There are none of  width larger than 11, and exactly the following five of width 11:
%\small
\[
\begin{array}{cc}
\Cyc(10, \ 582\, 595\, 883), &
\Cyc(10, \ 728\, 807\, 201),  \\
\Cyc(10, \ 976\, 965\, 023),  &
\Cyc(10, \ 1\, 066\, 142\, 419), \\
\Cyc(10, \ 1\, 113\, 718\, 783).
\end{array}
\]
Table~\ref{table:10} shows the smallest empty cyclotomic $10$-simplex of width $w$ for each $w\in 1,\dots,11$

\begin{table}
\small
\[
\begin{array}{cc|c}
\text{width}&&   \text{simplex}\\
\hline
1&&\Cyc(10, 23)  \\
2&&\Cyc(10, 199)  \\
3&&\Cyc(10, 4\, 159)  \\
4&&\Cyc(10, 55\, 243)  \\
5&&\Cyc(10, 237\, 161)  \\
6&&\Cyc(10, 1\, 197\, 901)  \\
7&(n.e.)&\Cyc(10, 5\, 631\, 979) \\
7&(e.) &\Cyc(10, 5\, 989\, 699)  \\
\\
\end{array}
\qquad
\begin{array}{cc|c}
\text{width} && \text{simplex}\\
\hline
8&&\Cyc(10, 15\, 465\, 649)  \\
9&(n.e.)&\Cyc(10, 56\, 870\, 903)  \\
9&(e.)&\Cyc(10, 60\, 822\, 851)  \\
10&(n.e.)&\Cyc(10, 155\, 176\, 451)   \\
10&(e.)&\Cyc(10, 176\, 796\, 313)  \\
11&(n.e.)&\Cyc(10, 313\, 968\, 931)  \\
11&(e.)&\Cyc(10, 582\, 595\, 883)  \\
\\ \\
\end{array}
\]
\caption{Smallest cyclotomic 10-simplex for each width from 1 to 11. The smallest cyclotomic simplex of each width is also empty for widths up to 6 and equal to 8, but not for widths $7,9,10,11$. For the latter we list both the absolute smallest ``(\emph{n.e.})'' and the smallest empty one ``(\emph{e.})''.}
\label{table:10}
\end{table}

\begin{figure}
\begin{tikzpicture}[xscale=0.05,yscale=0.5]
%vertical axis
\draw (-1.5,{log2(0.1)}) rectangle (0.5,{log2(0.1)});
\draw (-1.5,{log2(0.2)}) rectangle (0.5,{log2(0.2)});
\draw (-1.5,{log2(0.5)}) rectangle (0.5,{log2(0.5)});
\draw (-3.5,{log2(1)}) rectangle (0.5,{log2(1)});
\draw (-1.5,{log2(2)}) rectangle (0.5,{log2(2)});
\draw (-1.5,{log2(5)}) rectangle (0.5,{log2(5)});
\draw (-3.5,{log2(10)}) rectangle (0.5,{log2(10)});
\draw (-1.5,{log2(20)}) rectangle (0.5,{log2(20)});
\draw (-1.5,{log2(50)}) rectangle (0.5,{log2(50)});
\draw (-3.5,{log2(100)}) rectangle (0.5,{log2(100)});
\draw (-1.5,{log2(200)}) rectangle (0.5,{log2(200)});
\draw (-1.5,{log2(500)}) rectangle (0.5,{log2(500)});
\draw (-3.5,{log2(1000)}) rectangle (0.5,{log2(1000)});
\draw (-1.5,{log2(2000)}) rectangle (0.5,{log2(2000)});
\draw (-1.5,{log2(5000)}) rectangle (0.5,{log2(5000)});
\draw (-3.5,{log2(10000)}) rectangle (0.5,{log2(10000)});
\draw (0.5,{log2(0.05)}) rectangle (0.5,{log2(11000)});
\node at (-5.5,{log2(0.1)}) {$1$};
\node at (-5.5,{log2(0.2)}) {$2$};
\node at (-5.5,{log2(0.5)}) {$5$};
\node at (-7.5,{log2(1)}) {$10$};
\node at (-7.5,{log2(2)}) {$20$};
\node at (-7.5,{log2(5)}) {$50$};
\node at (-9,{log2(10)}) {$100$};
\node at (-9,{log2(100)}) {$10^3$};
\node at (-9,{log2(1000)}) {$10^4$};
\node at (-9,{log2(10000)}) {$10^5$};
% data
\draw (0.5,{log2(0.05)}) rectangle (1.5,{log2(5957.2)});
\draw (1.5,{log2(0.05)}) rectangle (2.5,{log2(4164.0)});
\draw (2.5,{log2(0.05)}) rectangle (3.5,{log2(3036.9)});
\draw (3.5,{log2(0.05)}) rectangle (4.5,{log2(2228.6)});
\draw (4.5,{log2(0.05)}) rectangle (5.5,{log2(1661.0)});
\draw (5.5,{log2(0.05)}) rectangle (6.5,{log2(1211.5)});
\draw (6.5,{log2(0.05)}) rectangle (7.5,{log2(892.2)});
\draw (7.5,{log2(0.05)}) rectangle (8.5,{log2(635.4)});
\draw (8.5,{log2(0.05)}) rectangle (9.5,{log2(473.5)});
\draw (9.5,{log2(0.05)}) rectangle (10.5,{log2(350.3)});
\draw (10.5,{log2(0.05)}) rectangle (11.5,{log2(266.2)});
\draw (11.5,{log2(0.05)}) rectangle (12.5,{log2(197.2)});
\draw (12.5,{log2(0.05)}) rectangle (13.5,{log2(153.2)});
\draw (13.5,{log2(0.05)}) rectangle (14.5,{log2(106.5)});
\draw (14.5,{log2(0.05)}) rectangle (15.5,{log2(77.6)});
\draw (15.5,{log2(0.05)}) rectangle (16.5,{log2(62.8)});
\draw (16.5,{log2(0.05)}) rectangle (17.5,{log2(50.3)});
\draw (17.5,{log2(0.05)}) rectangle (18.5,{log2(41.8)});
\draw (18.5,{log2(0.05)}) rectangle (19.5,{log2(32.3)});
\draw (19.5,{log2(0.05)}) rectangle (20.5,{log2(22.9)});
\draw (20.5,{log2(0.05)}) rectangle (21.5,{log2(18.4)});
\draw (21.5,{log2(0.05)}) rectangle (22.5,{log2(17.3)});
\draw (22.5,{log2(0.05)}) rectangle (23.5,{log2(15.4)});
\draw (23.5,{log2(0.05)}) rectangle (24.5,{log2(11.2)});
\draw (24.5,{log2(0.05)}) rectangle (25.5,{log2(13.5)});
\draw (25.5,{log2(0.05)}) rectangle (26.5,{log2(10.5)});
\draw (26.5,{log2(0.05)}) rectangle (27.5,{log2(8.2)});
\draw (27.5,{log2(0.05)}) rectangle (28.5,{log2(8.7)});
\draw (28.5,{log2(0.05)}) rectangle (29.5,{log2(6.1)});
\draw (29.5,{log2(0.05)}) rectangle (30.5,{log2(5.5)});
\draw (30.5,{log2(0.05)}) rectangle (31.5,{log2(7.8)});
\draw (31.5,{log2(0.05)}) rectangle (32.5,{log2(5.1)});
\draw (32.5,{log2(0.05)}) rectangle (33.5,{log2(4.7)});
\draw (33.5,{log2(0.05)}) rectangle (34.5,{log2(4.2)});
\draw (34.5,{log2(0.05)}) rectangle (35.5,{log2(2.9)});
\draw (35.5,{log2(0.05)}) rectangle (36.5,{log2(2.6)});
\draw (36.5,{log2(0.05)}) rectangle (37.5,{log2(3.3)});
\draw (37.5,{log2(0.05)}) rectangle (38.5,{log2(2.5)});
\draw (38.5,{log2(0.05)}) rectangle (39.5,{log2(2.2)});
\draw (39.5,{log2(0.05)}) rectangle (40.5,{log2(1.6)});
\draw (40.5,{log2(0.05)}) rectangle (41.5,{log2(2.1)});
\draw (41.5,{log2(0.05)}) rectangle (42.5,{log2(0.6)});
\draw (42.5,{log2(0.05)}) rectangle (43.5,{log2(1.4)});
\draw (43.5,{log2(0.05)}) rectangle (44.5,{log2(1.4)});
\draw (44.5,{log2(0.05)}) rectangle (45.5,{log2(0.6)});
\draw (45.5,{log2(0.05)}) rectangle (46.5,{log2(0.5)});
\draw (46.5,{log2(0.05)}) rectangle (47.5,{log2(0.5)});
\draw (47.5,{log2(0.05)}) rectangle (48.5,{log2(0.5)});
\draw (48.5,{log2(0.05)}) rectangle (49.5,{log2(0.4)});
\draw (49.5,{log2(0.05)}) rectangle (50.5,{log2(0.4)});
\draw (50.5,{log2(0.05)}) rectangle (51.5,{log2(1.2)});
\draw (51.5,{log2(0.05)}) rectangle (52.5,{log2(0.4)});
\draw (52.5,{log2(0.05)}) rectangle (53.5,{log2(0.9)});
\draw (53.5,{log2(0.05)}) rectangle (54.5,{log2(0.2)});
\draw (54.5,{log2(0.05)}) rectangle (55.5,{log2(0.5)});
\draw (55.5,{log2(0.05)}) rectangle (56.5,{log2(0.4)});
\draw (56.5,{log2(0.05)}) rectangle (57.5,{log2(0.2)});
\draw (57.5,{log2(0.05)}) rectangle (58.5,{log2(0.4)});
\draw (58.5,{log2(0.05)}) rectangle (59.5,{log2(0.5)});
\draw (59.5,{log2(0.05)}) rectangle (60.5,{log2(0.5)});
\draw (60.5,{log2(0.05)}) rectangle (61.5,{log2(0.4)});
%\draw (61.5,{log2(0.05)}) rectangle (62.5,{log2(0.0)});
\draw (62.5,{log2(0.05)}) rectangle (63.5,{log2(0.4)});
%\draw (63.5,{log2(0.05)}) rectangle (64.5,{log2(0.0)});
\draw (64.5,{log2(0.05)}) rectangle (65.5,{log2(0.3)});
\draw (65.5,{log2(0.05)}) rectangle (66.5,{log2(0.6)});
%\draw (66.5,{log2(0.05)}) rectangle (67.5,{log2(0.0)});
\draw (67.5,{log2(0.05)}) rectangle (68.5,{log2(0.2)});
\draw (68.5,{log2(0.05)}) rectangle (69.5,{log2(0.2)});
%\draw (69.5,{log2(0.05)}) rectangle (70.5,{log2(0.0)});
\draw (70.5,{log2(0.05)}) rectangle (71.5,{log2(0.2)});
\draw (71.5,{log2(0.05)}) rectangle (72.5,{log2(0.3)});
\draw (72.5,{log2(0.05)}) rectangle (73.5,{log2(0.6)});
\draw (73.5,{log2(0.05)}) rectangle (74.5,{log2(0.4)});
\draw (74.5,{log2(0.05)}) rectangle (75.5,{log2(0.4)});
\draw (75.5,{log2(0.05)}) rectangle (76.5,{log2(0.2)});
%\draw (76.5,{log2(0.05)}) rectangle (77.5,{log2(0.0)});
\draw (77.5,{log2(0.05)}) rectangle (78.5,{log2(0.1)});
\draw (78.5,{log2(0.05)}) rectangle (79.5,{log2(0.3)});
%\draw (79.5,{log2(0.05)}) rectangle (80.5,{log2(0.0)});
%\draw (80.5,{log2(0.05)}) rectangle (81.5,{log2(0.0)});
\draw (81.5,{log2(0.05)}) rectangle (82.5,{log2(0.2)});
\draw (82.5,{log2(0.05)}) rectangle (83.5,{log2(0.1)});
\draw (83.5,{log2(0.05)}) rectangle (84.5,{log2(0.3)});
%\draw (84.5,{log2(0.05)}) rectangle (85.5,{log2(0.0)});
\draw (85.5,{log2(0.05)}) rectangle (86.5,{log2(0.3)});
\draw (86.5,{log2(0.05)}) rectangle (87.5,{log2(0.4)});
\draw (87.5,{log2(0.05)}) rectangle (88.5,{log2(0.1)});
\draw (88.5,{log2(0.05)}) rectangle (89.5,{log2(0.1)});
\draw (89.5,{log2(0.05)}) rectangle (90.5,{log2(0.1)});
\draw (90.5,{log2(0.05)}) rectangle (91.5,{log2(0.2)});
%\draw (91.5,{log2(0.05)}) rectangle (92.5,{log2(0.0)});
\draw (92.5,{log2(0.05)}) rectangle (93.5,{log2(0.2)});
\draw (93.5,{log2(0.05)}) rectangle (94.5,{log2(0.2)});
\draw (94.5,{log2(0.05)}) rectangle (95.5,{log2(0.1)});
\draw (95.5,{log2(0.05)}) rectangle (96.5,{log2(0.4)});
%\draw (96.5,{log2(0.05)}) rectangle (97.5,{log2(0.0)});
\draw (97.5,{log2(0.05)}) rectangle (98.5,{log2(0.2)});
%\draw (98.5,{log2(0.05)}) rectangle (99.5,{log2(0.0)});
\draw (99.5,{log2(0.05)}) rectangle (100.5,{log2(0.3)});
\draw (100.5,{log2(0.05)}) rectangle (101.5,{log2(0.2)});
\draw (102.5,{log2(0.05)}) rectangle (103.5,{log2(0.1)});
\draw (103.5,{log2(0.05)}) rectangle (104.5,{log2(0.4)});
\draw (104.5,{log2(0.05)}) rectangle (105.5,{log2(0.3)});
\draw (105.5,{log2(0.05)}) rectangle (106.5,{log2(0.2)});
\draw (106.5,{log2(0.05)}) rectangle (107.5,{log2(0.2)});
\draw (107.5,{log2(0.05)}) rectangle (108.5,{log2(0.2)});
\draw (110.5,{log2(0.05)}) rectangle (111.5,{log2(0.1)});
\draw (111.5,{log2(0.05)}) rectangle (112.5,{log2(0.2)});
\draw (115.5,{log2(0.05)}) rectangle (116.5,{log2(0.2)});
\draw (117.5,{log2(0.05)}) rectangle (118.5,{log2(0.2)});
\draw (118.5,{log2(0.05)}) rectangle (119.5,{log2(0.1)});
\draw (121.5,{log2(0.05)}) rectangle (122.5,{log2(0.1)});
\draw (123.5,{log2(0.05)}) rectangle (124.5,{log2(0.1)});
\draw (130.5,{log2(0.05)}) rectangle (131.5,{log2(0.1)});
\draw (142.5,{log2(0.05)}) rectangle (143.5,{log2(0.1)});
\draw (147.5,{log2(0.05)}) rectangle (148.5,{log2(0.1)});
\draw (175.5,{log2(0.05)}) rectangle (176.5,{log2(0.1)});
%  horizontal axis:
\node at (1,-4.9) {$0$};
\node at (51,-4.9) {$.5$};
\node at (101,-4.9) {$1.0$};
\node at (151,-4.9) {$1.5$};
\node at (201,-4.9) {$2.0$};
\node at (215,-4.9) {$\times 10^9$};
\draw (0.5,{log2(0.038)}) rectangle (0.5,{log2(0.05)});
\draw (10.5,{log2(0.045)}) rectangle (10.5,{log2(0.05)});
\draw (20.5,{log2(0.045)}) rectangle (20.5,{log2(0.05)});
\draw (30.5,{log2(0.045)}) rectangle (30.5,{log2(0.05)});
\draw (40.5,{log2(0.045)}) rectangle (40.5,{log2(0.05)});
\draw (50.5,{log2(0.038)}) rectangle (50.5,{log2(0.05)});
\draw (60.5,{log2(0.045)}) rectangle (60.5,{log2(0.05)});
\draw (70.5,{log2(0.045)}) rectangle (70.5,{log2(0.05)});
\draw (80.5,{log2(0.045)}) rectangle (80.5,{log2(0.05)});
\draw (90.5,{log2(0.045)}) rectangle (90.5,{log2(0.05)});
\draw (100.5,{log2(0.038)}) rectangle (100.5,{log2(0.05)});
\draw (110.5,{log2(0.045)}) rectangle (110.5,{log2(0.05)});
\draw (120.5,{log2(0.045)}) rectangle (120.5,{log2(0.05)});
\draw (130.5,{log2(0.045)}) rectangle (130.5,{log2(0.05)});
\draw (140.5,{log2(0.045)}) rectangle (140.5,{log2(0.05)});
\draw (150.5,{log2(0.038)}) rectangle (150.5,{log2(0.05)});
\draw (160.5,{log2(0.045)}) rectangle (160.5,{log2(0.05)});
\draw (170.5,{log2(0.045)}) rectangle (170.5,{log2(0.05)});
\draw (180.5,{log2(0.045)}) rectangle (180.5,{log2(0.05)});
\draw (190.5,{log2(0.045)}) rectangle (190.5,{log2(0.05)});
\draw (200.5,{log2(0.038)}) rectangle (200.5,{log2(0.05)});
\draw (0.5,{log2(0.05)}) rectangle (210,{log2(0.05)});
%\node at (210,-4.7) {${\cdot} 10^8$};
\end{tikzpicture}
\caption{The counts of empty cyclotomic 10-simplices with volumes up to $2^{31}$. Each bar counts the simplices whose volume falls within a block of 10 million. The vertical scale is logarithmic.}
\label{fig:bars}
\end{figure}

\begin{remark}
\label{rem:primes2}
For $d=4,6,10$ we have also explored what happens if we allow $N$ not to be prime.
This expanded search is significantly more computationally expensive because, as mentioned in Remark~\ref{rem:primes}, roots of unity may come in several orbits for each $N$ and they all have to be considered; in particular, for cyclotomic $d$ simplices with non-prime volume $N$ we use the notation $\Cyc(d,N,r)$, where $r$ is the root of unity producing that simplex. Yet, we have the following necessary condition for emptyness: for some $\Cyc(d,N,r)$ to be empty one needs $\Cyc(d,p)$ empty for all prime divisors $p$ of $N$.

In dimension four no extra empty cyclotomic simplices arise, but in dimension six we get 28 new ones, besides the 88 simplices of prime volume.
Their volumes range from 841 to 13\,021 and are always the product of two primes from $\{29, 43, 71, 113, 197, 211, 239$, $337, 449\}$.  
Among them there is one of width 6, with volume $N= 6931$. 
Non-prime volumes often produce several orbits of 7th roots of unity. E.g., for 6931 there are six orbits, which produce three non-empty simplices, two empty simplices of width 4, and one empty simplex of width 6.

In dimension ten, we only considered simplices of volume less than \(10^{10}\) and with two factors in their volume.
This search resulted in many additional
%338\,570 
empty simplices, with volume ranging between 529 and 3\,637\,493\,861.
%\joseph{
%This count of empty simplices is not right, since sometimes the many roots of unitiy are actually the same. A few of %these appear with multiplicity 11 instead of multiplicity 1.
%}
In this set, three additional empty simplices were found with width 11:
\begin{align*}
\Cyc(10, 476\, 164\, 613, 403\, 772\, 212), 
\\
\Cyc(10, 702\, 431\, 819, 305\, 486\, 391),
\\
\Cyc(10, 1\, 419\, 547\, 823, 822\, 028\, 992). 
\end{align*}
Observe that these include 10-simplices of width 11 both with smaller and bigger volumes than all the ones obtained with prime volumes.
\end{remark}

\end{itemize}

\section{Circulant simplices}
\label{sec:circulant}

\subsection{Circulant simplices, and our specific family \texorpdfstring{$\A{d}{m}$}{S(d,m)}}

A second way of guaranteeing that a $d$-simplex has a unimodular cyclic symmetry acting on its vertices is by permuting coordinates. 
Fix a dimension $d$, and let $v=(v_0,\dots,v_d)\in \R^{d+1}$ be any real vector. For each $i\in \N$ let $v^{(i)}$ be the vector obtained from $v$ by a cyclic shift (to the right) of $i$ units in its coordinates. 
That is $v^{(d+1)} = v^{(0)}=v$ and for $i =1, \ldots, d$:
\[
v^{(i)}:=(v_{d+1-i}, \dots,v_d,v_0,\dots,v_{d-i})
\]

\begin{definition}
\label{def:circulant}
If the $v^{(i)}$'s defined above are linearly independent, we call 
\[
\conv(v^{(0)},\dots, v^{(d)})
\]
the \emph{circulant simplex} (of dimension $d$) with generator $v$. 
\end{definition}

The name comes from the fact that \blue{square matrices in which all columns are obtained from the first one by cyclic shifts are called circulant, and} the matrix $(\vi[0] \dots \vi[d])$ with columns the vertex coordinates of a circulant simplex is circulant.
Linear independence requires that $\sum_i v_i \ne 0$. In fact, since the circulant simplex with generator $v$
affinely spans the hyperplane $\sum_i x_i = \sum_i v_i$, its volume equals
\begin{align}
\label{eq:volume}
\Vol(\conv(\vi[0] \dots \vi[d]))=
\left|
\frac{\det(\vi[0] \dots \vi[d])}{ \sum_{i=0}^d v_i}
\right|.
\end{align}

The example of interest to us is the following. 
For each $m\in \R$ and $d\in \N$, we denote $\A{d}{m}$ the circulant simplex of dimension $d$ and generator $(1,m,0,\dots,0,-m)$. That is, the vertices of $\A{d}{m}$ are the columns of the following $(d+1)\times(d+1)$ circulant matrix:
\[
M(d,m):=
\begin{pmatrix}
	1      & -m     & 0      & \ldots & \ldots & 0      & m      \\
	m      & 1      & -m     & \ddots &        &        & 0      \\
	0      & m      & 1      & \ddots & \ddots &        & \vdots \\
	\vdots & \ddots & \ddots & \ddots & \ddots & \ddots & \vdots \\
	\vdots &        & \ddots & \ddots & 1      & -m     & 0      \\
	0      &        &        & \ddots & m      & 1      & -m     \\
	-m     & 0      & \ldots & \ldots & 0      & m      & 1
\end{pmatrix}
\]
For odd $d$, the width of $\A{d}{m}$ is $1$ with respect to the functional $\sum_{i=0}^{(d-1)/2} x_{2i}$, which alternates values 0 and 1 on  vertices. Thus, we will always assume $d$ even. Since we are interested in lattice simplices we will sometimes assume $m$ to be an integer, but in some proofs we need to regard $m$ as a continuous parameter.

\begin{remark}
Although this family looks too specific to deserve attention per se, we were led to studying it from the (not at all obvious) realization that the unique empty $4$-simplex of width 4 is unimodularly equivalent to $\A{4}{2}$. Moreover, $\A{6}{3}$ is also an empty $6$-simplex of width 6 (in fact, it coincides with the smallest cyclotomic empty simplex of that dimension). 
That is:
\[
\A{4}{2} = \Cyc(4,101), \qquad
\A{6}{3} = \Cyc(6,6301).
\]
\end{remark}

An explicit formula for this circulant determinant is easy to compute.

\begin{lemma}
\label{lemma:volume}
For every even $d\ge 2$, and every $m\in \R$ we have
\begin{align*}
\Vol(\A{d}{m}) =\det(M(d,m)) =&  
\sum_{i=0}^{ d/2} \frac{d+1}{d+1-i}\binom{d+1-i}{i} m^{2i}\\
=& 1 + \sum_{i=1}^{ d/2} \frac{d+1}{i}\binom{d-i}{i-1} m^{2i}.
\end{align*}
\end{lemma}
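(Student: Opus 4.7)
The plan is to compute $\det(M(d,m))$ by a direct Leibniz expansion and then conclude via \eqref{eq:volume}. Since the generator $v = (1, m, 0, \ldots, 0, -m)$ satisfies $\sum_i v_i = 1$, formula \eqref{eq:volume} gives $\Vol(\A{d}{m}) = |\det(M(d,m))|$, so the whole content of the lemma is the closed-form evaluation of the determinant.

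The nonzero entries of $M(d,m)$ are $M_{i,i} = 1$, $M_{i,i+1} = -m$, and $M_{i,i-1} = m$ (indices cyclic modulo $d+1$), so a permutation $\sigma$ can contribute to $\det(M(d,m))$ only when $\sigma(i) - i \in \{-1, 0, 1\} \pmod{d+1}$ for every $i$. A short case analysis on maximal ``runs'' of consecutive indices with $\sigma(i) = i+1$ shows that the only such permutations are: (a) the two full cyclic rotations $i \mapsto i \pm 1$; and (b) products of disjoint transpositions $(i, i+1)$ of cyclically adjacent pairs, i.e., matchings of the cycle graph $C_{d+1}$. For a matching of $k$ edges, the corresponding permutation has sign $(-1)^k$ and each matched pair contributes $M_{i,i+1}M_{i+1,i} = -m^2$, giving a net contribution of $(-1)^k (-m^2)^k = m^{2k}$. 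The two full cyclic rotations are $(d{+}1)$-cycles, hence have sign $(-1)^d = 1$ (since $d$ is even); their products of entries are $(-m)^{d+1} = -m^{d+1}$ and $m^{d+1}$ respectively, which cancel exactly because $d+1$ is odd.

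It then remains to count matchings, for which I will invoke (or quickly re-prove, by conditioning on whether a fixed vertex is saturated and reducing to matchings of a path $P_{n-1}$ or $P_{n-3}$) the classical formula that $C_n$ has $\frac{n}{n-k}\binom{n-k}{k}$ matchings of size $k$. With $n = d+1$ and $k$ ranging from $0$ to $d/2$ (the maximum matching size in the odd cycle $C_{d+1}$), this yields the first closed form in the statement. The equivalence of the two displayed formulas is a routine binomial identity: for $i \ge 1$, both $\frac{d+1}{d+1-i}\binom{d+1-i}{i}$ and $\frac{d+1}{i}\binom{d-i}{i-1}$ reduce to $\frac{(d+1)(d-i)!}{i!(d+1-2i)!}$, and the $i=0$ term is $1$. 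I do not anticipate a serious obstacle; the most delicate step is the classification of contributing permutations, where one must rule out non-singleton runs in order to reach the clean dichotomy into matchings and full rotations.
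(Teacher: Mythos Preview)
Your proposal is correct and follows essentially the same argument as the paper: both compute the determinant via the Leibniz expansion, classify the contributing permutations into the two full cyclic rotations (which cancel) and matchings of the cycle $C_{d+1}$ (each contributing $m^{2k}$), and then invoke the standard count $\frac{d+1}{d+1-k}\binom{d+1-k}{k}$ for $k$-matchings of a cycle. Your write-up is in fact slightly more explicit about the sign bookkeeping and the equivalence of the two binomial expressions, but the route is the same.
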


\begin{proof}
The equality of volume and determinant follows from \cref{eq:volume}, since $\sum_i v_i =1$.
Let us compute the determinant via its permutation expansion.

The permutations that contribute to the determinant are \emph{exactly} (a) the cyclic shifts by one unit in both directions
(the diagonals of $-m$'s and $m$'s) which contribute $-m^{d+1}$ and $m^{d+1}$ respectively, so that they cancel out; and (b)
permutations consisting of a product of disjoint transpositions of cyclically consecutive elements. The corresponding terms consist in picking the SW-NE diagonal of certain disjoint $2\times2$ principal minors $\begin{pmatrix} 1 & -m \\ m & 1 \end{pmatrix}$, plus diagonal elements in the rest of positions. Each $2\times2$  minor contributes $m^2$ to the product, and there are exactly 
$\frac{d+1}{d+1-i}\binom{d+1-i}{i}$ ways of choosing $i$ disjoint pairs of  consecutive entries in a cyclic string of length $d+1$ (matchings of size $i$ in a cycle of length $d+1$; these are counted by Lucas polynomials, OEIS A034807). 

The proof that no other permutation contributes to the determinant follows from the following properties:
\blue{
\begin{itemize}
\item If a permutation that contributes uses two consecutive entries from the $-m$ diagonal  or  from the $m$ diagonal, then it uses the whole diagonal; that is, the permutation is the cyclic shift in one or the other direction.

\item Apart from the cyclic shifts, a permutation using a certain $-m$ must also use the  $m$ in the position symmetric to it, and vice-versa.\qedhere
\end{itemize}
}
\end{proof}

\begin{remark}
The same proof gives that the circulant determinant with generator
$v=(a,b,c,0,\dots,0)$ equals 
\[
a^{d+1} + c^{d+1} +(-1)^d \sum_{i=0}^{\lceil d/2\rceil} \frac{d+1}{d+1-i}\binom{d+1-i}{i} (-ac)^i b^{d+1-2i}.
\]
\end{remark}

\subsection{Width of \texorpdfstring{$\A{d}{m}$}{S(d,m)}}

\begin{theorem} 
\label{thm:circulant}
The lattice width of $\A{d}{m}$  equals $1$ if $d$ is odd and $2m$ if $d$ is even.
\end{theorem}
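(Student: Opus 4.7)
For odd $d$, the functional noted in the excerpt ($\sum_{i} x_{2i}$ over even $i$) already gives width $1$, and lattice width $\geq 1$ is automatic for a full-dimensional lattice simplex. I focus on the even-$d$ case and set $n := d+1$ (odd). My plan for the upper bound is to verify directly that $f = x_0$ achieves width $2m$: using $(v^{(j)})_0 = v_{-j \bmod n}$ and the support of $v$, one reads off the values $1, -m, 0, \dots, 0, m$ on $v^{(0)}, \dots, v^{(d)}$.

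For the lower bound, I would let $a \in \Z^n$ represent a non-constant integer functional (modulo constants), define $b_j := a_j + m(a_{j+1} - a_{j-1})$ and the integer central differences $c_j := a_{j+1} - a_{j-1}$ (so $\sum_j c_j = 0$), and aim at $\max b - \min b \geq 2m$. Normalize so $\min a = 0$ and $M := \max a \geq 1$, set $K := \{j : a_j = 0\}$, $K' := \{j : a_j = M\}$, and split by cases. If $K$ contains two cyclically consecutive indices, the endpoints $j_l, j_r$ of a maximal such run satisfy $b_{j_l} = -m\, a_{j_l - 1} \leq -m$ and $b_{j_r} = m\, a_{j_r + 1} \geq m$, giving width $\geq 2m$; the run-case for $K'$ is symmetric.

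Otherwise both $K, K'$ are isolated on the cycle of odd length $n$. The case $M = 1$ then forces an alternating pattern between $K$ and its complement, impossible for odd $n$. The easy sub-case is when some $k \in K$ has $c_k \leq -1$: pair it with any $j$ having $c_j \geq 1$ (existing since $\sum c_j = 0$ with integer entries) to get $b_j - b_k \geq m - (-m) = 2m$, and symmetrically when some $c_{k'} \geq 1$. The remaining, hardest sub-case has $c_k \geq 0$ for all $k \in K$ and $c_{k'} \leq 0$ for all $k' \in K'$. My plan here is to decompose $K'$ into its maximal ``distance-$2$ blocks'' $\{k'_1, k'_1 + 2, \dots, k'_1 + 2(r-1)\}$ (well-defined and non-wrapping because $n$ is odd and $K'$ is isolated), and chain $a_{k'_i + 1} \leq a_{k'_i - 1} = a_{k'_{i-1} + 1}$ from $c_{k'_i} \leq 0$ along the block to obtain $a_{k'_r + 1} \leq a_{k'_1 - 1}$. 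Setting $j := k'_1 - 1$, $k := k'_r + 1$, a direct check gives $c_j \geq 1$, $c_k \leq -1$, and $j, k \notin K \cup K'$ (otherwise the sub-case hypothesis would be violated), concluding $b_j - b_k = (a_j - a_k) + m(c_j - c_k) \geq 2m$.

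The main obstacle is precisely this last sub-case: the extrema of $a$ are forced into isolated positions, so the direct level-set (``run'') argument no longer applies; the resolution exploits the cyclic distance-$2$ block structure of the maximum level set together with the sign constraints on the central differences.
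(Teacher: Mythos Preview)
Your proposal is correct, but it takes a noticeably different route from the paper's own proof. One minor slip: in the hardest sub-case you assert $j,k\notin K\cup K'$ ``otherwise the sub-case hypothesis would be violated'', but in fact $j=k'_1-1$ could lie in $K$ (having $c_j\ge 1$ is consistent with the hypothesis $c|_K\ge 0$); only $k\notin K$ follows from the hypothesis, and $j,k\notin K'$ follows from isolation. This does not matter: the inequality $b_j-b_k\ge 2m$ only needs $a_j\ge a_k$ (which your chaining gives) and $c_j-c_k\ge 2$.

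The paper's argument is considerably shorter. It exploits the oddness of $n=d+1$ in a single stroke: since the step-$2$ walk on $\Z/n\Z$ visits every index, one can locate a \emph{minimal plateau} $w_{2(i_0-1)}>w_{2i_0}=\cdots=w_{2j_0}<w_{2(j_0+1)}$ with $w_{2i_0}=\min w$, and then a two-case split on the sign of $w_{2j_0+1}-w_{2i_0-1}$ produces, in each case, a single explicit pair of vertices whose $\langle w,\cdot\rangle$-difference telescopes into $m\cdot(\text{something}\ge 1)-m\cdot(\text{something}\le -1)+(\text{something}\ge 0)\ge 2m$. Your proof reaches the same inequality but through a longer case analysis on the level sets $K=\{a=0\}$ and $K'=\{a=M\}$ (runs versus isolated points, then a sign dichotomy on $c|_K$ and $c|_{K'}$, then distance-$2$ blocks in $K'$). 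Interestingly, your ``distance-$2$ block'' in $K'$ is essentially a maximal \emph{max}-plateau in the step-$2$ walk, so the two proofs share the same geometric kernel; the paper just organises it so that a single plateau (at the minimum) and a single two-way split suffice, whereas your approach has to first peel off the run cases and the easy sign cases before arriving there. The payoff of the paper's approach is brevity; the payoff of yours is that every step is an elementary level-set/sign argument with no need to spot the step-$2$ parametrisation up front.
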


\begin{proof}
	\newcommand{\<}{\langle}
	\renewcommand{\>}{\rangle}

If $d$ is odd (that is, if the number of coordinates is even) then the sum of the even coordinates is a lattice functional giving width one to $\A{d}{m}$.

So, for the rest of the proof we assume $d$ even. 
The width of $\A{d}{m}$ is clearly $\le2m$, attained by any of the coordinate functionals. 
Let $w \in \ZZ^{d+1}$ be a non-constant vector, which we regard as a functional, and let us show that it gives width at least $2m$ to $\A{d}{m}$.

\blue{Recall that} $\vi$ denotes the $i$-th column of $M{d}{m}$. Indices $i$ go from $0$ to $d$ and are considered modulo $d+1$.
Observe that 
\[
\<w_,\vi\>=m(w_{i+1}-w_{i-1}) +w_i.
\]

		Since $d+1$ is odd, we can cyclically go through all the $i$'s in steps of length two \blue{(that is, skipping every second element)}. Thus, there exist indices $i_0$ and $j_0$ with $w_{2i_0}= \min (w)$ and
		\begin{align*}
		w_{2(i_0-1)} > w_{2i_0} = w_{2(i_0+1)} = \dots = w_{2j_0} < w_{2(j_0+1)}.
		\end{align*}
		(Observe that it could well be that $i_0=j_0$, but this is not a problem for us).
		We distinguish two cases:

		\begin{description}
			\item[Case 1: $w_{2i_0-1} \leq w_{2j_0+1}$]
				In this case, we compute
				\begin{align*}
				\<w, v^{(2j_0+1)}\> - \<w, v^{(2i_0-1)}\> 
					&= \quad m\underbrace{(w_{2j_0+2} - w_{2j_0})}_{\geq 1} \\
					&\quad-m\underbrace{(w_{2i_0} - w_{2i_0-2})}_{\leq -1} \\
					&\quad+ \underbrace{w_{2j_0+1} - w_{2i_0-1}}_{\geq 0} 
				\qquad\qquad \geq 2m. 
				\end{align*}
			\item[Case 2: $w_{2i_0-1} > w_{2j_0+1}$] 
				In this case, there exists a $k_0$ in $[i_0, j_0]$ (considered cyclically) such that 
				$w_{2k_0-1} > w_{2k_0+1}$ and (by construction) $w_{2k_0} = \min (w)$.
				We compute that
				\begin{align*}
				\<w, v^{(2j_0+1)}\> - \<w, v^{(2k_0)}\> 
					&=\quad m\underbrace{(w_{2j_0+2} - w_{2j_0})}_{\geq 1} \\
					&\quad- m\underbrace{(w_{2k_0+1} - w_{2k_0-1})}_{\leq -1} \\
					&\quad+ \underbrace{w_{2j_0+1} - w_{2k_0}}_{\geq 0 } 
				\qquad\qquad \geq 2m.
\qedhere
				\end{align*}
		\end{description}
\end{proof}

\begin{example}
\label{exm:nonprime_circulant}
The situation for odd $d$ generalizes as follows to any $d$ such that $d+1$ is not prime:
let $d+1=ab$ be a non-trivial factorization and consider the circulant $d$ simplex $S^{a}(d,m)$ generated by
\[
\vi[0] = (1,0,\dots,0,m,0,\dots,0,-m),
\]
where the first block of zeroes has length $a-2$. Observe that $S^{2}(d,m)=\A{d}{m}$.

\blue{Let $f_i: \R^{d+1} \to \R$, for $i\in \{0,\dots,a-1\}$, be the functional that adds the coordinates with indices equal to $i \mod a$.}
The simplex $S^{a}(d,m)$ has width one with respect to the functional $f_0$, and it is an empty simplex by the following argument. Let $F_a=(f_0,\dots,f_{a-1}):\R^{d+1} \to \R^a$.
 Let $F_b:\R^{d+1} \to \R^b$ be the projection that forgets all coordinates exept those with indices multiple of $a$. Then, $F_a(S^{a}(d,m))$ is the standard unimodular simplex, and the fiber containing $\vi[0]$ projects by $F_b$ isomorphically to the standard simplex. Since all vertex-fibers are isomorphic, $F_a$ sends $S^{a}(d,m)$ to a unimodular simplex and  \blue{all of its vertex fibers are unimodular}. This implies that $S^{a}(d,m)$ is empty.

Hence, if $d+1$ is not prime, then there are infinitely many empty circulant simplices of dimension $d$.
\end{example}

\subsection{The inequality description of \texorpdfstring{$\A{d}{m}$}{S(d,m)}}

Let $\uj[j] \in \Z^{d+1}$ be the $j$-th row of the matrix (where we use indices $j=0,\ldots,d$)
\[
\det(M(d,m)) \cdot M(d,m)^{-1}.
\] 
The factor $\det(M(d,m))$ makes any $\uj[j]$ an integer vector whenever $m$ is an integer. Clearly, the inequality description of $\A{d}{m}$ is
\[
\A{d}{m} = \left\{x \in \R^{d+1} \,:\, \sum_{i=0}^{d} x_i = 1,\, \text{ and } \langle \uj, x \rangle \ge 0 \ \forall j=0, \ldots, d\right\}.
\]

In order to explicitly compute the vectors $u\up j$, let us define the following \emph{continuant} matrices of size $k\times k$, for each $k\in \N$.
\[ C(k,m) := \begin{pmatrix}
	1      & -m     & 0      & \ldots & \ldots & 0      & 0     \\
	m      & 1      & -m     & \ddots &        &        & 0      \\
	0      & m      & 1      & \ddots & \ddots &        & \vdots \\
	\vdots & \ddots & \ddots & \ddots & \ddots & \ddots & \vdots \\
	\vdots &        &\ddots   & \ddots & 1      & -m     & 0      \\
	0      &        &        & \ddots & m      & 1      & -m     \\
	0     & 0      & \ldots & \ldots & 0      & m      & 1
\end{pmatrix}
\]
and the following \emph{continuant numbers}
\[c_k := c_k(m):=\det(C(k,m)).\]
We additionally define $c_{-1} := 0$, $c_{0} := 1$, and observe that $c_1 = 1$ and $c_2 = 1+m^2$. We  have:

\begin{lemma} Let $k \ge 1$. Then the following holds:
\begin{enumerate}
\item \[c_k = c_{k-1} + m^2 c_{k-2}.\]
\item \[c_k = \sum_{i=0}^{\lfloor\frac{k}{2}\rfloor} \binom{k-i}{i} m^{2i}=\sum_{i=0}^{\infty} \binom{k-i}{i} m^{2i}.\]
\end{enumerate}
\label{c-lemma}
\end{lemma}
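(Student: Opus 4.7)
My plan is to establish part (1) by a cofactor expansion of the continuant determinant, and then derive part (2) from the recursion via Pascal's identity and induction on $k$.

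For part (1), I would expand $c_k = \det C(k,m)$ along its first row, which is $(1,-m,0,\dots,0)$. The $(1,1)$-cofactor is $\det C(k-1,m) = c_{k-1}$, since deleting the first row and column of $C(k,m)$ leaves exactly the $(k-1)\times(k-1)$ continuant. For the $(1,2)$-cofactor, after deleting row $1$ and column $2$ the resulting $(k-1)\times(k-1)$ matrix has first column $(m,0,\dots,0)^T$ (the only nonzero entry coming from the original $(2,1)$-entry of $C(k,m)$, which equals $m$); expanding along that column produces $m\cdot \det C(k-2,m) = m\,c_{k-2}$. Putting the two contributions together with the correct signs yields
\[
c_k = c_{k-1} + (-m)\cdot(-1)\cdot m\,c_{k-2} = c_{k-1} + m^2 c_{k-2}.
\]

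For part (2), note first that the two written expressions for $c_k$ agree because $\binom{k-i}{i}=0$ when $i>\lfloor k/2\rfloor$, so extending the sum to infinity is harmless. I would then induct on $k$. The base cases $k=0$ and $k=1$ reduce to $c_0=\binom{0}{0}=1$ and $c_1=\binom{1}{0}=1$, which are immediate. For the inductive step, substituting the formula into the recursion from part (1) gives
\[
c_{k-1} + m^2 c_{k-2} = \sum_{i\ge 0} \binom{k-1-i}{i} m^{2i} + \sum_{i\ge 0} \binom{k-2-i}{i} m^{2i+2}.
\]
After re-indexing the second sum by $j=i+1$ and matching the coefficient of $m^{2i}$, one applies Pascal's identity $\binom{k-1-i}{i} + \binom{k-1-i}{i-1} = \binom{k-i}{i}$ (together with the boundary term $\binom{k}{0}=1$ coming from $i=0$) to recover $\sum_{i\ge 0}\binom{k-i}{i}m^{2i}$.

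There is no real obstacle here; the argument is fully routine once the cofactor structure of $C(k,m)$ is recognized. The only points requiring care are the sign bookkeeping and the observation that the only nonzero entry in the first column of the $(1,2)$-minor is $m$ at the top, and the shift of summation index when combining the two sums in the inductive step.
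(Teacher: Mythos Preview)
Your argument is correct and follows essentially the same approach as the paper: the paper's proof says only that part~(1) ``easily follows from Laplace expansion'' and that for part~(2) ``it is straightforward to verify the recursion in~(1)'', which is precisely what you carry out in detail. The paper also mentions an alternative combinatorial proof of~(2) by adapting the permutation-expansion argument used for the circulant determinant, but your inductive verification via Pascal's identity is the method the paper states first.
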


\begin{proof}
 (1) easily follows from Laplace expansion. For (2) it is straightforward to verify the recursion in (1). Alternatively, (2) can be proved directly with the same arguments as in Lemma~\ref{lemma:volume}, the main difference being that now the $2\times 2$ minors are chosen from a linear string, rather than a circular one.
\end{proof}

It follows from the lemma that $c_k=c_k(m)$ is a polynomial in $m$ of degree $k$ or $k-1$ (depending on the parity of $k$)  with constant coefficient $1$. In fact, we later relate them to the well-studied Fibonacci polynomials.

Since $M(d,m)$ is circulant, its inverse is circulant too. Thus, each $\uj$ is indeed the $j$th cyclic shift of the first row $\uj[0]$ of $\det(M(d,m)) \cdot M(d,m)^{-1}$. Using the continuant numbers we can give an explicit formula for $\uj[0]$:

\blue{For the rest of this section, let $u =(u_0, \ldots, u_{d})$ denote the first row of the matrix $\det(M(d,m)) \cdot M(d,m)^{-1}$.
Analyzing the entries of $u$ is crucial in what follows.}

\begin{proposition}
\[
u_k = c_{d-k} \, m^{k} + (-1)^{d+k-1} c_{k-1}\, m^{d+1-k},
\qquad \forall\  k \in \{0, \ldots, d\}.
\]
In particular, if $d$ is even we get:
\begin{enumerate}
\item $u_k >0$ for every odd $k=1,3,\dots,d-1$.
\item $u_k > u_{k+2}$ for every even $k=0,2,\dots,d-2$.
\end{enumerate}
\label{u-prop}
\end{proposition}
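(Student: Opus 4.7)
My plan is to apply the adjugate formula together with a ``peeling'' reduction of the relevant minors to tridiagonal continuants. By the adjugate formula, $u_k = (-1)^k \det M^{(k,0)}$, where $M^{(k,0)}$ denotes $M(d,m)$ with row $k$ and column $0$ deleted. The case $k = 0$ is immediate, since $M^{(0,0)}$ is exactly the continuant $C(d,m)$, so $u_0 = c_d$ in agreement with the formula (using the convention $c_{-1}=0$).

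For $k \geq 1$, I expand $\det M^{(k,0)}$ along its first row, which is row $0$ of $M(d,m)$ with its leading entry deleted, namely $(-m, 0, \ldots, 0, m)$ with $m$ in the last position. Laplace expansion yields
\[
\det M^{(k,0)} = -m \det P + (-1)^{d-1} m \det Q,
\]
where $P$ (resp.\ $Q$) is the submatrix of $M(d,m)$ on rows $\{1,\ldots,d\}\setminus\{k\}$ and columns $\{2,\ldots,d\}$ (resp.\ $\{1,\ldots,d-1\}$).

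The crucial step is the following \emph{peeling} observation: for any $j$ with $1 \le j < k$, row $j$ of $M(d,m)$ restricted to columns $\{j+1,\ldots,d\}$ has a single nonzero entry, namely $-m$ at column $j+1$. Iteratively expanding $\det P$ along its first row strips off $k-1$ factors of $-m$ (one for each $j = 1, \ldots, k-1$), after which the remaining matrix is the submatrix of $M(d,m)$ on rows and columns $\{k+1,\ldots,d\}$---precisely the continuant $C(d-k,m)$. This gives $\det P = (-m)^{k-1} c_{d-k}$. A symmetric argument---expanding $\det Q$ along its last row and using that row $j$ for $j > k$ restricted to columns $\{1,\ldots,j-1\}$ has single nonzero entry $m$ at column $j-1$---gives $\det Q = m^{d-k} c_{k-1}$. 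Substituting into the displayed formula and simplifying signs produces the claimed expression for $u_k$.

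For the positivity claims, I use that $c_j > 0$ for $j \geq 0$ and $m > 0$ by Lemma~\ref{c-lemma}(2). Part (1) is then immediate: with $d$ even and $k$ odd, $(-1)^{d+k-1}=1$, so $u_k$ is a sum of two positive terms. For Part (2), with $d$ and $k$ both even, $(-1)^{d+k-1}=-1$, and applying the three-term recursion $c_j = c_{j-1} + m^2 c_{j-2}$ from Lemma~\ref{c-lemma}(1) gives
\[
u_k - u_{k+2} = m^k(c_{d-k} - m^2 c_{d-k-2}) + m^{d-1-k}(c_{k+1} - m^2 c_{k-1}) = m^k c_{d-k-1} + m^{d-1-k} c_k > 0.
\]

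The main obstacle will be careful bookkeeping in the iterated expansions of $P$ and $Q$: I must verify at each stage that the row being stripped off really reduces to a single nonzero entry in the expected position, and that the continuant matrix left at the end has the correct size and shape (including the degenerate cases $k=1$ and $k=d$, where the number of peelings is extremal). Once this is set up, the remaining computations are mechanical sign tracking.
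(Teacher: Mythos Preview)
Your proof is correct and follows essentially the same approach as the paper: adjugate formula, Laplace expansion along the first row of the deleted minor, and reduction of the two resulting submatrices to continuants (your explicit ``peeling'' is what the paper calls obtaining a ``block tridiagonal matrix''). The only minor difference is in Part~(2): you compute $u_k - u_{k+2}$ directly via the recursion $c_j - m^2 c_{j-2} = c_{j-1}$, whereas the paper rescales to $\tilde c_k := m^{d-k}c_k$ and observes that this makes the even and odd subsequences monotone---both arguments are equally short and rest on the same three-term recurrence.
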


\begin{proof}
For a matrix $M$, denote by $M_{\setminus(i,j)}$ the matrix $M$ with the $i$th row and the $j$th column removed (with indices in $\{0, \ldots, d\}$). For the first formula, observe that $u_k$ equals $(-1)^k$ times the determinant of $M(d,m)_{\setminus(k,0)}$. If $k=0$, we immediately see that $\det(M(d,m)_{\setminus(0,0)}) = c_d$. So, let $k  > 0$. Now, we use Laplacian expansion along the first row of $\det(M(d,m)_{\setminus(k,0)})$. We get two summands (corresponding to the first and the last entry of the first row). The first one equals $(-m)$ times the determinant of a block tridiagonal matrix that is easily seen to be equal to $(-m)^{k-1} \cdot c_{d-k}$. The second summand equals $(-1)^{d+1} m$ times the determinant of a block tridiagonal matrix that evaluates to $c_{k-1} \cdot m^{d-k}$. Together we get that $u_k$ equals 
\[
(-1)^k ((-m) \cdot  (-m)^{k-1} \cdot c_{d-k} + (-1)^{d+1} m \cdot c_{k-1} \cdot m^{d-k}),
\]
 as desired.

For the second part, positivity for odd $k$  is obvious, since in this case $u_k$ is a positive combination of two of the $c_i$'s. 
To deal with even $k$, observe that denoting $\tilde c_k = m^{d-k}c_k$ the recurrence  in Lemma~\ref{c-lemma} becomes
\[
\tilde c_k = \frac{\tilde c_{k-1}}m + \tilde c_{k-2}.
\]
Hence, both the even and the odd subsequences of $\tilde c_k$ are strictly increasing. 
This implies that $u_k$ decreases with $k$ for even $k$, since  for even $k$ we have
\[
u_k = 
c_{d-k} \, m^{k} - c_{k-1}\, m^{d+1-k}=
\tilde c_{d-k} -  \tilde c_{k-1}.
\qedhere
\]
\end{proof}

%\begin{remark}
%From these formulas we can recover the formula for the determinant of $M(d,m)$. Pairing the first column of $M(d,m)$ with $u$ yields 
%\begin{align*}
%\det(M(d,m))&=u_0+mu_1-mu_{d}\\
%&=c_d +m (c_{d-1}m+(-1)^{d}m^d)-m(m^d-c_{d-1}m)\\
%&=c_d+2 c_{d-1} m^2+((-1)^d-1) m^{d+1}.
%\end{align*}
%When $d$ is even, this simplifies to 
%\begin{equation}
%\det(M(d,m)) = c_d+2 m^2 c_{d-1}.
%\label{c-det-formula}
%\end{equation}
%Plugging in Lemma~\ref{c-lemma}(2) gives the expression in binomial coefficients.
%\end{remark}

\subsection{Emptiness of \texorpdfstring{$\A{d}{m}$}{S(d,m)}}

In this section we treat $m\in [0,\infty)$ as a continuous parameter. For positive small $m$  the standard basis vectors lie outside $\A{d}{m}$; for example, the functional $w=(1, 0,\dots,0,\epsilon)$ separates $e_1$ from all the $\vi$, whenever $\epsilon < 1-m$. In the other extreme, for $m\to \infty$ the $e_i$'s lie inside $\A{d}{m}$. 
With the above computations it is now straightforward to see where the limit between these two situations is. The non-obvious and maybe surprising part of the following result is that containing the standard basis is the only obstruction for $\A{d}{m}$ to be empty.

\begin{theorem}
\label{thm:empty}
Assume $d$ even.
The following conditions are equivalent:
\begin{enumerate}
\item $m^{d-1} < c_{d-1}$.
\item $u_d <0$.
\item $\A{d}{m}$ does not contain the standard basis vectors.
\item $\A{d}{m}$ is an empty simplex.
\end{enumerate}
\end{theorem}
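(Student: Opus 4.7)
My plan is to treat the three easy implications directly and then address the main content $(3) \Rightarrow (4)$.

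For $(1) \iff (2)$: substituting $k = d$ into the formula for $u_k$ from Proposition~\ref{u-prop} gives $u_d = m^d - c_{d-1}m = m(m^{d-1} - c_{d-1})$ (using $c_0 = 1$ and that $(-1)^{d+d-1} = -1$ for even $d$); with $m>0$, the sign of $u_d$ matches that of $m^{d-1} - c_{d-1}$. For $(2) \iff (3)$: the barycentric coordinates of $e_0$ with respect to $\vi[0], \ldots, \vi[d]$ are the entries of $M(d,m)^{-1}e_0$, i.e., the cyclic permutation $(u_0, u_d, u_{d-1}, \ldots, u_1)/\det M$ of $u$; so $e_0 \in \A{d}{m}$ iff every $u_k \geq 0$, which by Proposition~\ref{u-prop} (odd-indexed entries positive, even-indexed entries strictly decreasing in $k$) is equivalent to $u_d \geq 0$. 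Cyclic symmetry extends this to every $e_i$. For $(4) \Rightarrow (3)$: for $m \geq 1$, each vertex $\vi$ has three non-zero coordinates while each $e_j$ has only one, so no $e_j$ is a vertex; if some $e_j$ lay in $\A{d}{m}$, it would be a non-vertex lattice point, contradicting emptiness.

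The main direction $(3) \Rightarrow (4)$ is the real obstacle. Assuming $u_d < 0$, let $p \in \A{d}{m} \cap \Z^{d+1}$; I plan to show $p$ is a vertex. Writing barycentric coordinates as $\lambda = a/\det M$ with $a \in \Z_{\geq 0}^{d+1}$, $\sum a_i = \det M$, the integrality condition $p = M\lambda \in \Z^{d+1}$ becomes $Ma \equiv 0 \pmod{\det M}$. Since $\gcd(m, \det M) = 1$ (the formula in Lemma~\ref{lemma:volume} gives $\det M \equiv 1 \pmod m$), this is equivalent to the cyclic linear recurrence $m a_{j+1} \equiv a_j + m a_{j-1} \pmod{\det M}$. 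Combined with its wrap-around constraint, which after using the continuant identity $c_d c_{d-2} - c_{d-1}^2 = m^{2(d-1)}$ and $\det M = c_d + 2m^2 c_{d-1}$ simplifies to $c_d a_1 \equiv u_d a_0 \pmod{\det M}$, the system has exactly $\det M$ solutions modulo $\det M$, in natural bijection with the cosets of $G = \Lambda/\Lambda_\Delta$ (via Proposition~\ref{prop:notcyclic}).

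Each coset has a unique canonical representative with $a_j \in [0, \det M - 1]$, and corresponds to a non-vertex lattice point of $\A{d}{m}$ exactly when the representative's sum equals $\det M$ (rather than a higher multiple). Under $u_d < 0$, the coset of $e_0$ has representative $(u_0, u_d + \det M, u_{d-1}, \ldots, u_1)$ summing to $2\det M$, as expected. The remaining task is to verify that no other non-trivial coset gives sum $\det M$ either. Concretely, parameterizing cosets by a single $a_0 \in \Z/\det M$ (in the principal case where $G$ is cyclic), the task becomes showing that the map $k \mapsto 2k - \sum_i \lfloor kb_i/\det M \rfloor$, with $b$ the canonical representative of $[e_0]$, never takes the value $1$ on $\{1, \ldots, \det M - 1\}$. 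I expect this combinatorial-arithmetic step to be the hardest part: it must exploit the structural bounds on $u$ (only $u_d$ negative, $u_0 > u_2 > \cdots > u_d$, odd entries positive) and the specific form of $\det M$, likely via a case analysis on how many $b_i$ exceed various thresholds as $k$ varies, or a reduction handling separately the $\mathbb{Z}_2$-component of $G$ that arises when facets have volume two.
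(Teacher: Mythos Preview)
Your treatment of the three easy directions is fine and matches the paper's argument closely: $(1)\Leftrightarrow(2)$ via $u_d = m(m^{d-1}-c_{d-1})$, $(2)\Leftrightarrow(3)$ via Proposition~\ref{u-prop} and cyclic symmetry, and $(4)\Rightarrow(3)$ trivially.

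The gap is in $(2)\Rightarrow(4)$. What you have written there is not a proof but a programme, and you say so yourself (``I expect this combinatorial-arithmetic step to be the hardest part''). Your reduction to the emptiness criterion of Proposition~\ref{prop:emptyness} is correct in spirit: one must rule out, for each nonzero class in $G_{\A{d}{m}}$, that its canonical representative in $\{0,\dots,N-1\}^{d+1}$ has coordinate sum exactly $N$. But this leaves roughly $N$ cases (with $N$ the volume, a degree-$d$ polynomial in $m$), and you give no mechanism to handle them uniformly. The structural facts you list about $u$ (signs and monotonicity) describe only the coset of $e_0$; they say nothing directly about how the residues of $kb$ distribute for general $k$. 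There is no obvious reason why the function $k\mapsto 2k - \sum_i \lfloor kb_i/N\rfloor$ should avoid the value $1$ beyond a brute check, and the non-cyclic case ($d\equiv 2\pmod 6$, $m$ odd) adds a further wrinkle you only gesture at.

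The paper sidesteps this arithmetic entirely with a geometric argument (Lemma~\ref{lemma:empty}). Writing $a\up i = e_{i+1}-e_{i-1}$ and $A=\conv(a\up 0,\dots,a\up d)$, it observes that $\A{d}{m}\subset \Delta + mA$, that the lattice points of $\Delta+mA$ are covered by the $d+1$ translated subsimplices $e_{i-1}+mA\up i$, and that when $u_d<0$ the facet functional $u$ is nonpositive on all of $e_d+mA\up 0$ with equality only at the vertex $e_d+ma\up d$. Thus each $e_{i-1}+mA\up i$ meets $\A{d}{m}$ only in a single vertex, and emptiness follows. This argument treats all lattice points at once via a single linear inequality per cone, rather than one residue class at a time; that is what makes it go through.
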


\begin{proof}
The implication (4)$\;\ra\;$(3) is trivial, and the equivalence of the two first conditions follows from $u_d = m^{d} - c_{d-1}\, m$.

By cyclic symmetry, one of the standard basis vectors lies in $\A{d}{m}$ if and only if all of them lie. This happens if and only if $\langle u, e_i\rangle = u_i\ge 0$ for every $i$. Since $e_d$ is the smallest of the even $u_k$s and all odd $u_k$s are positive, we have the equivalence of (2) and (3).

Only (2)$\;\ra\;$(4) remains to be proved. This implication requires some additional notation, so we write it as a separate statement below (Corollary~\ref{coro:empty}).
\end{proof}

To show that $\A{d}{m}$ is empty whenever $u_d<0$ we introduce the following notation: 

Let $e_0, \ldots, e_d$ denote the standard basis of $\R^{d+1}$. We will again use indices modulo $d+1$. For $i=0, \ldots, d$, let $a_i = e_{i+1} - e_{i-1}$, so that the vertices of $\A{d}{m}$ are the points $e_i + m a_i$. 
Let $A$ be the convex hull of all the $a_i$'s, which is a cyclically symmetric $d$-simplex in the hyperplane $\{\sum_i x_i =0\} \subset \R^{d+1}$ that contains the origin in its relative interior. Let $F_i$ be the facet of $A$ opposite to the vertex $a_i$ and let $A_i$ be the convex hull of the origin and $F_i$. Observe that the simplex $A$ is barycentrically subdivided into the $d+1$ simplices $A_i$.
Finally, let $\Delta= \conv (e_0,\dots, e_d)$ denote the standard unimodular simplex in the hyperplane $\{\sum_{i=0}^d x_i =1\}$.

\begin{lemma}
\label{lemma:empty}
With this notation, and assuming $d$ even:
\begin{enumerate}
\item $S(d,m)$ is contained in the Minkowski sum $\Delta + m A$. 

\item Every lattice point of $\Delta + m A$ lies in  $e_{i-1} + m A_{i}$ for some $i$ (with indices taken modulo $d+1$). 

\item If $u_d < 0$ then $\langle u, v\rangle \le 0$ for every vertex $v$ of $e_d + m A_0$. Equality holds only for $v= e_d + m a_d$.

\end{enumerate}
\end{lemma}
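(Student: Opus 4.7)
Part~(1) follows directly from the definitions. Each vertex $v\up{i}=e_i+m\,a\up{i}$ of $S(d,m)$ is the sum of a vertex of $\Delta$ and a vertex of $A$, so $v\up{i}\in\Delta+mA$; by convexity the whole simplex is contained in $\Delta+mA$.

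For part~(3), I would compute $\langle u,v\rangle$ at each vertex $v$ of $e_d+m\,A\up{0}$. These vertices are $e_d$ (from the origin of $A\up{0}$) and $e_d+m\,a\up{j}$ for $j\in\{1,\dots,d\}$. Since $u$ is the first row of $\det(M(d,m))\cdot M(d,m)^{-1}$, one has $\langle u,v\up{k}\rangle=\det(M(d,m))\,\delta_{0k}$; in particular, for $j\ne 0$, the identity $\langle u,v\up{j}\rangle=0$ unwinds to
\[
u_j + m(u_{j+1}-u_{j-1}) = 0.
\]
Substituting in, for $j\ne 0$,
\[
\langle u,\,e_d+m\,a\up{j}\rangle \;=\; u_d + m(u_{j+1}-u_{j-1}) \;=\; u_d - u_j,
\]
while $\langle u,e_d\rangle=u_d$. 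The assumption $u_d<0$ combined with Proposition~\ref{u-prop} (which gives $u_k>0$ for odd $k$ and $u_k>u_{k+2}$ for even $k$) then yields $u_d-u_j<0$ for every $j\ne d$, and $u_d-u_d=0$. Hence $\langle u,v\rangle\le 0$ at every vertex of $e_d+m\,A\up{0}$, with equality exactly at $v=e_d+m\,a\up{d}$.

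Part~(2) is the main technical step. A lattice point $p\in\Delta+mA$ lies on $\sum_\ell x_\ell=1$, so some $p_j\ge 1$. My plan is to pick an index $i$ (among those with $p_{i-1}\ge 1$, according to an appropriate tie-breaking rule) and to prove that $r:=(p-e_{i-1})/m$ lies in $A\up{i}$. In $A$-barycentric coordinates $r=\sum_k\mu_k\,a\up{k}$, this amounts to verifying $\mu_k\ge 0$ for all $k$ together with $\mu_i=\min_k\mu_k$. Using the relation $r_\ell=\mu_{\ell-1}-\mu_{\ell+1}$, the normalization $\sum_k\mu_k=1$, and the fact that $d+1$ is odd (so that the map $\ell\mapsto\ell+2$ is a single cycle on $\{0,\dots,d\}$), each $\mu_k$ becomes an explicit cyclic partial sum of the entries of $p-e_{i-1}$. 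The required sign conditions on these partial sums would then be derived from the maximality of $p_{i-1}$ together with the existence of an auxiliary decomposition $p=q+mr'$ with $q\in\Delta$, $r'\in A$, supplied by $p\in\Delta+mA$.

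The principal obstacle is part~(2), in particular pinning down the correct tie-breaking rule among indices attaining $\max_\ell p_\ell$ and verifying the resulting cyclic-partial-sum inequalities. I expect this to be a delicate but elementary finite case analysis. Once part~(2) is settled, parts~(1)--(3) are arranged to combine in Corollary~\ref{coro:empty}: a lattice point $p\in S(d,m)$ lies by (1) and (2) — after applying cyclic symmetry — in $e_d+m\,A\up{0}$, and then (3) forces $\langle u,p\rangle\le 0$ with equality possible only at $p=v\up{d}$; together with $\langle u,\cdot\rangle\ge 0$ on $S(d,m)$ this pins $p$ down to be a vertex.
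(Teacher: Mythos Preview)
Your treatment of parts~(1) and~(3) is correct and essentially identical to the paper's; the decomposition $\langle u,e_d+m a\up{j}\rangle=(u_d-u_j)+\langle u,v\up{j}\rangle$ is exactly what the paper uses.

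For part~(2), however, your proposal is genuinely incomplete, and the ``delicate finite case analysis'' you anticipate is a symptom of a missing structural idea. The paper avoids picking the index $i$ from the size of the coordinates of $p$ altogether. Instead it partitions the hyperplane $\{\sum x_\ell=1\}$ by the complete fan $\{C+\Sigma\up{i}\}_i$ centered at the barycenter $C=\tfrac{1}{d+1}(1,\dots,1)$, where $\Sigma\up{i}=\cone(A\up{i})$. Two observations then make the argument short: first, each $\Sigma\up{i}$ is a \emph{unimodular} cone (its generators $a\up{j}$, $j\ne i$, form a lattice basis of the hyperplane $\{\sum x_\ell=0\}$), so the lattice points in $C+\Sigma\up{0}$ are exactly $e_d+\Z_{\ge0}\{a\up{1},\dots,a\up{d}\}$, the point $e_d$ being identified as the unique lattice point in the half-open fundamental parallelepiped via the explicit identity $e_d=C+\sum_{i=1}^{d}\tfrac{i}{d+1}a\up{2i}$. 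Second, the supporting facet of $\Delta+mA$ in the direction of $F\up{0}$ is $e_d+mF\up{0}$ (again because that same $e_d$ maximizes the relevant functional on $\Delta$). Combining these, any lattice point in $(C+\Sigma\up{0})\cap(\Delta+mA)$ has a representation $e_d+\sum_{j\ne0}\lambda_j a\up{j}$ with $\lambda_j\in\Z_{\ge0}$ and $\sum\lambda_j\le m$, which is precisely membership in $e_d+mA\up{0}$.

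Your direct approach---solve $r_\ell=\mu_{\ell-1}-\mu_{\ell+1}$ for the $\mu_k$ as cyclic partial sums and then verify signs---is the same linear algebra underneath, but by anchoring at a vertex $e_{i-1}$ chosen via $\max_\ell p_\ell$ rather than at the barycenter $C$, you lose the clean fan structure and are forced into ad~hoc tie-breaking. The key idea you are missing is to let the fan choose $i$, and then use unimodularity of $\Sigma\up{i}$ to get the lattice description for free.
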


\begin{figure}[htb]
\raisebox{0.1\height}{
\includegraphics[scale=0.65]{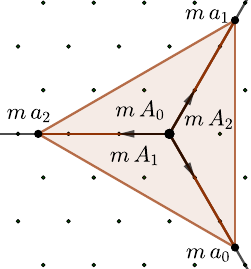}\quad}
\includegraphics[scale=0.65]{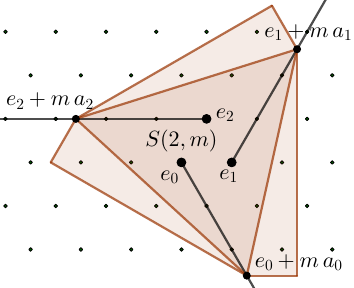}\quad
\includegraphics[scale=0.65]{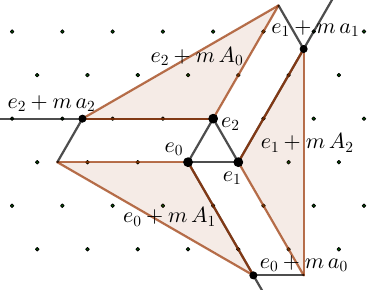}

\quad
$ m\,A = \bigcup_{i=0}^{d} m\,A_i$,\qquad % \hfil
$\A{d}{m} \subset \Delta + m\,A$, 
\qquad\qquad\qquad
%$\bigcup_{i=0}^{d} e_{i-1} + m\,A_i \subset 
$\Delta + m\,A$
\qquad\qquad
\qquad

\caption{Illustration of parts (1) and (2) of Lemma~\ref{lemma:empty}.
Left: the simplex $m\,A$ and its decomposition into $d+1$ dilated unimodular simplices $m\,A_i$. Center: our circulant simplex $\A{d}{m}$,  contained in the Minkowski sum $\Delta + m\,A$. Right: the simplices $e_{i-1} + m A_{i}$ contain all the lattice points in $\Delta + m\,A$. In the picture we have $m=2.6$, which is bigger than the emptyness threshold in dimension two, $m_0(2)=1$.}
\end{figure}

\begin{figure}[htb]
%\raisebox{0.1\height}
{\includegraphics[scale=1.5]{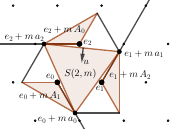}\quad}

\caption{Illustration of part (3) of Lemma~\ref{lemma:empty}. In this picture $m=0.8$, below the threshold 
$m_0(2)=1$. The simplex $e_{d} + m A_{0}$ lies completely in the open half-space $\{u <0\}$, except for its  vertex $e_{d} + m a_{d}$, common to $\A{d}{m}$. }
\end{figure}

\begin{proof}
Part (1) is obvious, since the vertices of $S(d,m)$ are the points $e_i +m  a_i$.

Part (3) uses the description of the vector $u$.
The vertices of $e_d + m A_0$ are the points $e_d +m  a_i$ for $i\ne 0$, together with $e_d$. We have $\langle u, e_d \rangle = u_d < 0$, and for the \blue{other vertices of $e_d + m A_0$} we write
\[
\langle u, e_d +m  a_i\rangle = 
\langle u, e_d -e_i\rangle + 
\langle u, e_i +m  a_i\rangle. 
\]
The second summand equals zero since $e_i +m  a_i$ (for $i\ne 0$) is a vertex of the facet of $\A{d}{m}$ given by $\langle u , x \rangle=0$. The first summand equals $u_d-u_i$, which is negative except for $i=d$, by parts (1) and (2) in Proposition~\ref{u-prop} (assuming $d$ even).

It only remains, then, to prove part (2).
Let $C=\frac1{d+1}(1,\dots,1)$ be the ``center'' of the hyperplane $\{\sum_i x_i =1\}$; that is, the barycenter of both $\A{d}{m}$ and $\Delta$. 

Let $\Sigma_i = \cone(A_i)$ be the $i$th facet-cone of $A$, and consider the complete fan centered at $C$ given by the affine cones $C+\Sigma_i $, $i=0,\dots,d$. 
We will represent points in the \(\sum_{i=0}^d x_i=1\) hyperplane in the form 
\begin{equation}
e_d + \sum_{i=1}^d \lambda_i a_i\label{eq:hyper}.
\end{equation}
Part (2) is equivalent to the claim that all the lattice points in $(C+\Sigma_i ) \cap (\Delta + m A)$ lie in $e_{i-1} + m A_i$. 
\blue{We are going to} prove the claim for \(i=0\); that is, we show that the lattice points in $(C+\Sigma_0 ) \cap (\Delta + m A)$ 
\blue{are contained in}  $e_{d} + m A_0$. The claim for all \(i\) follows cyclically.

The cone $\Sigma_0$, generated by $a_1,\dots, a_{d}$, is unimodular. Thus, its lattice points are the semigroup $Z$ generated by $a_1,\dots, a_{d}$: 
\[
Z:=\left\{\sum_{i=1}^d \lambda_i a_i: \lambda_i \in \Z_{\ge0} \ \forall i\right\}.
\] 
This implies that the lattice points in the \blue{translated cone $C+\Sigma_0 $} are the set $p+Z$ where $p$ is the unique lattice point in the \blue{translated} half-open parallelepiped 
\[
C +\left\{\sum_{i=1}^d \lambda_i a_i: \lambda_i \in [0,1) \ \forall i\right\}.
\]
%When translating \(Z\) by \(C\), t
This unique point $p$ is $e_d$, since
\[
e_d = C + \sum_{i=1}^d \frac{i}{d+1} a_{2i}.
\]
(Remember that indices are regarded modulo $d+1$, so $a_{2i}$ for $i > d/2$ means $a_{2i-d-1}$).
\blue{Summing up, we have so far proved that
\begin{equation}
\Z^d \cap (C+\Sigma_0) = e_d + Z.
\label{eq:CandZ}
\end{equation}

%Therefore lattice points in \(C+\Sigma_0\) must satisfy \(\lambda_i \geq 0\) for all \(i\) (recall the representation \eqref{eq:hyper}).

Recall that we denote $F_0$ the facet of $A$ opposite to $a_0$, which equals the facet of $A_0$ opposite to $C$.
Let $f_0$ be its outer normal vector and let $b_0$ be the value of $f_0(x)$ on $F_0$. We have that $f_0( a_j)=b_0$ for all $j\ne 0$ and $f_0( a_0) < b_0$. 

The facet $F'_0$ of \(\Delta +m A\) with this same normal vector $f_0$ equals \(e_i + m F_0\), where \(e_i\) is the vertex of $\Delta$ maximizing $f_0 (e_i)$.
Since
\[
f_0(a_j) = f_0(e_{j+1}-e_{j-1})= f_0(e_{j+1}) - f_0(e_{j-1}) >0 \quad  \forall j \ne 0,
\]
the $e_i$ maximizing $f_0$ can only be $e_{-1} =e_d$.
%with the largest evaluation of the functional defining \(F_0\). The calculation of the unique point above shows that this point is \(e_d\), 
Thus, \(F_0'\) equals \(e_d+ m F_0\) and its facet inequality is $f_0(x) \le f_0(e_d) + m b_0$. In particular,
\begin{equation}
\Delta +m A \subset\{f_0(x) \le f_0(e_d) + m b_0\}.
\label{eq:f_0}
\end{equation}

Let now $p$ be a lattice point in $(C + \Sigma_0) \cap (\Delta +m A)$. By \eqref{eq:CandZ} we have that
$p=e_d +\sum_{i=1}^d\lambda_i a_i$ for some $\lambda_1,\dots, \lambda_d\in \Z_\ge 0$. This leads to
\[
f_0 (p) = f_0(e_d) + b \sum_{i=1}^d\lambda_i
\]
and \eqref{eq:f_0} implies $\sum_{i=1}^d\lambda_i\le m$. Thus, $p\in e_d +mA_0$,
as we wanted to show.
%
%Therefore, lattice points in \(\Delta + m A\) must satisfy \(\sum \lambda_i \le m\).
%
%Combining these results, we see that lattice points of \((\Delta + m A) \cap (C+\Sigma_0 ) \) are contained in \(e_d +m A_0\).
}
\end{proof}

From this lemma, we quickly obtain that \((2) \implies (4)\) in Theorem~\ref{thm:empty}.
Indeed, by part (1) of the lemma all lattice points of \(\A{d}{m}\) lie in $\Delta + mA$ and by part (2) they also lie in $e_{i-1} + m A_{i}$ for some $i$. Part (3) shows that the only point in both  $e_{d} + m A_{0}$ and $\A{d}{m}$ is the vertex $e_d+ma_ d$. Hence, by cyclic symmetry the only lattice points that \(\A{d}{m}\) contains are its vertices, the definition of empty.

Observe that the equation $m^{d-1} = c_{d-1}$ in part (1) of  Theorem~\ref{thm:empty} has a unique solution with $m>0$, since the right-hand side is a polynomial of degree $d-2$ with all coefficients nonnegative (and not all zero). Thus, the following is a more explicit version of Theorem~\ref{thm:empty}:

\begin{corollary}
\label{coro:empty}
Let $d$ be even and let $m_0(d)$ be the unique positive solution of $u_{d}(m)=0$. Equivalently, of
\[
m^{d-1} = c_{d-1}.
\]
If $m <m_0(d)$ then $\A{d}{m}$ is empty.
\qed
\end{corollary}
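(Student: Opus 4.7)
The plan is to deduce the corollary as an immediate reformulation of Theorem~\ref{thm:empty}, after verifying that $m_0(d)$ is well-defined as a unique positive real number.

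The two defining conditions $u_d(m)=0$ and $m^{d-1}=c_{d-1}$ are equivalent for $m>0$: Proposition~\ref{u-prop} gives $u_d = m^d - c_{d-1}\,m = m\,(m^{d-1}-c_{d-1})$, so the nonzero positive roots of $u_d$ are exactly the positive roots of $m^{d-1}-c_{d-1}$.

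To see that $m^{d-1}=c_{d-1}(m)$ has a unique positive solution, I would invoke Lemma~\ref{c-lemma}(2): since $d$ is even, $c_{d-1}(m)$ is a polynomial in $m$ of degree $d-2$ with nonnegative coefficients and constant term $1$. Dividing by $m^{d-2}$ rewrites the equation as
\[
m \;=\; \frac{c_{d-1}(m)}{m^{d-2}},
\]
where the left-hand side is strictly increasing from $0$ to $\infty$ on $(0,\infty)$, while the right-hand side is a Laurent polynomial whose exponents are all non-positive with at least the $1/m^{d-2}$ term strictly positive, so it is strictly decreasing from $\infty$ to a positive limit. The two graphs therefore cross at a single value $m=m_0(d)>0$, and $m^{d-1}-c_{d-1}(m)$ is negative precisely on $(0,m_0(d))$.

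Consequently, whenever $m\in(0,m_0(d))$, condition~(1) of Theorem~\ref{thm:empty} is satisfied, and by the equivalence $(1)\Leftrightarrow(4)$ in that theorem, $S(d,m)$ is empty. There is no substantive obstacle here: the geometric work has already been carried out in Lemma~\ref{lemma:empty} and Theorem~\ref{thm:empty}, and the corollary only repackages the conclusion by naming the threshold $m_0(d)$.
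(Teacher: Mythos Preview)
Your proposal is correct and matches the paper's approach: the corollary is a direct reformulation of Theorem~\ref{thm:empty} once $m_0(d)$ is shown to be well-defined, and your uniqueness argument (dividing through by $m^{d-2}$ and comparing monotone functions) is a slightly more explicit version of the paper's one-line remark that $c_{d-1}$ has degree $d-2$ with nonnegative coefficients. Just be aware that in the paper's logical organization the implication $(2)\Rightarrow(4)$ of Theorem~\ref{thm:empty} is deferred and only completed, via Lemma~\ref{lemma:empty}, in the paragraph immediately preceding this corollary---so your appeal to the full equivalence $(1)\Leftrightarrow(4)$ is really an appeal to that paragraph rather than to the theorem's own proof block.
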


We have computationally verified emptyness up to $d=22$.

\subsection{Asymptotics of \texorpdfstring{$m_0(d)$}{mâ(d)}, via Fibonacci polynomials and hyperbolic functions}
\label{sec:asymptotics}
We now investigate the asymptotics of \(m_0(d)\), to understand how wide an empty circulant simplex \(\A{d}{m}\) may be. We continue to use the base assumption that \(d\) is even, since otherwise the width is \(1\).

The continuant numbers and thus the coefficients of the $u$-vector have many interesting interpretations in terms of special functions (e.g, generalized Binomial polynomials, hypergeometric functions, etc. \cite[p.203,204,213]{ConcreteMath94}). Here, we  stress the relation to the following sequence of polynomials, one for each degree $h$. Following \cite{HogBic73,HogLon74} we call them \emph{Fibonacci polynomials}:%
\footnote{%This definition is taken from \cite{HogBic73,HogLon74} and is also the one in wikipedia. 
Other sources (e.g., the book \cite{FlajSedge}) call Fibonacci polynomials a related, but different, sequence.}
%
%There seem to be several competing definitions of Fibonacci polynomials out there, so be careful.
%Another common definition has
%$F_0(z)=F_1(z)=1$ and 
%$F_{h+2}(z)=F_{h+1}(z) + zF_{h}(z)$ which gives
%$F_h(z)= \sum_{j=0}^{\infty} \binom{h-j}{j} z^j$.
%
\[
\Fibo_h(x):=\sum_{j=0}^{\infty} \binom{h-1-j}{j} x^{h-1-2j}.
\]
As usual, in the definition we take the natural convention that $\binom{a}{b}$ is well-defined for every $a\in \Z$ and $b\in \N$, with $\binom{a}{b}=0$ if $a <b$. From Lemma~\ref{c-lemma} we see that
 \[
 c_{k} = \sum_{i=0}^{\infty} \binom{k-i}{i} m^{2i}=m^{k}\Fibo_{k+1}\left(\frac1{m}\right)
 \]
and from Proposition~\ref{u-prop} we derive that for $k \in \{0, \ldots, d\}$ 
\begin{equation}
\label{eq:Fibo_u}
u_{k} = \left(\Fibo_{d+1-k}\left(\frac{1}{m}\right)+(-1)^{k-1} \Fibo_{k}\left(\frac{1}{m}\right)\right) \cdot m^d.
\end{equation}

The property of $\Fibo_h$ that we need is the following~\cite{HogBic73}:
\begin{equation}
\label{eq:Fibo}
\Fibo_{2n}(2\sinh z) = \frac{\sinh (2nz)}{\cosh(z)}, \quad
\Fibo_{2n+1}(2\sinh z) = \frac{\cosh ((2n+1)z)}{\cosh(z)}.
\end{equation}

%This observation allows us to rescale the $u$-vector irrationally to get a more elegant and compact description. 
Let us define
\[\alpha = \alpha(m) := \arcsinh\left(\frac{1}{2m}\right),\]
so that $\frac1m=2\sinh \alpha$.
Note that $\cosh(\alpha)=\cosh(\arcsinh\left(\frac{1}{2m}\right)) = \sqrt{\frac{1}{4m^2}+1}$.

Using Equations~\eqref{eq:Fibo_u} and~\eqref{eq:Fibo} we get that, for  even \(d\),
\[
  \frac{u_d}{m^d} =  F_1\left(\frac1m\right) - F_d\left(\frac1m\right) =1 - \frac{\sinh (d \alpha)}{\cosh(\alpha)}.
\]
%Define %for $k \in \{0, \ldots, d\}$ the {\em $\tilde{u}$-vector}
%\[
%\tilde{u}_d :=  u_d \frac{\cosh \alpha}{m^d} 
%= u_d \frac{\sqrt{\frac{1}{4 m^2}+1}}{m^d}
%.
%\]
%\blue{In particular}
%\[
%\tilde{u}_d
%=\cosh\left(\alpha\right) - \sinh\left(d \alpha\right)
%\]
%One of the advantages of this rescaling is that the coordinates are quite small (as there is no $m^d$ factor involved) and can be quickly computed. 

Since $m_0(d)$ is the unique solution to $u_d=0$, we obtain that $\alpha_0(d):=\alpha(m_0(d))$ is the unique solution to 
$\cosh\left(\alpha\right) - \sinh\left(d \alpha\right) =0$.

\begin{theorem}
\label{thm:asymptotics}
\[
\lim_{d\to \infty} \frac{2m_0(d)}{d} = \frac{1}{\arcsinh(1)} \simeq 1.1346.
\]
\end{theorem}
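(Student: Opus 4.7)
My approach would be to work entirely with the hyperbolic parametrization already set up. Since $u_d$ and $\tilde u_d$ differ by a positive factor, the equation $u_d(m_0(d))=0$ is equivalent to
\[
\cosh(\alpha_d) \;=\; \sinh(d\,\alpha_d),\qquad \alpha_d:=\arcsinh\!\Bigl(\tfrac{1}{2m_0(d)}\Bigr).
\]
Since $m\mapsto\alpha(m)$ is a decreasing bijection $(0,\infty)\to(0,\infty)$ with $2m\sinh(\alpha)=1$, pinning down the asymptotics of $m_0(d)$ reduces to pinning down those of $\alpha_d$; indeed
\[
\frac{2m_0(d)}{d}\;=\;\frac{1}{d\sinh(\alpha_d)}\;=\;\frac{1}{d\alpha_d}\cdot\frac{\alpha_d}{\sinh(\alpha_d)},
\]
so it suffices to show that $\alpha_d\to 0$ and $d\alpha_d\to \arcsinh(1)$.

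The heart of the argument is therefore a two-step analysis of the sequence $(\alpha_d)$. First, I would set $g_d(\alpha):=\sinh(d\alpha)-\cosh(\alpha)$ and observe that for every fixed $\alpha>0$ the function $d\mapsto g_d(\alpha)$ is strictly increasing; hence the unique positive roots $\alpha_d$ form a strictly decreasing sequence, with some limit $\alpha_\infty\ge 0$. If $\alpha_\infty$ were positive, then $d\alpha_d\to\infty$, forcing $\sinh(d\alpha_d)\to\infty$, in contradiction with the bounded right-hand side $\cosh(\alpha_d)\to\cosh(\alpha_\infty)<\infty$. Thus $\alpha_d\to 0$, giving $\cosh(\alpha_d)\to 1$; plugging back into the defining equation yields $\sinh(d\alpha_d)\to 1$, and continuity of $\arcsinh$ then delivers $d\alpha_d\to\arcsinh(1)$.

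To finish, $\alpha_d\to 0$ gives $\alpha_d/\sinh(\alpha_d)\to 1$, so the displayed identity above converges to $1/\arcsinh(1)$, as desired. The only nonroutine ingredient I foresee is the monotonicity/boundedness step above, and that is settled at once by the strict inequality $g_{d+1}(\alpha)>g_d(\alpha)$ for all $\alpha>0$; no deeper machinery beyond what is already in the preceding subsection appears to be needed.
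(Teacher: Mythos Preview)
Your proposal is correct and follows essentially the same route as the paper: both pass to the hyperbolic parametrization $\alpha_d=\arcsinh(1/(2m_0(d)))$ satisfying $\sinh(d\alpha_d)=\cosh(\alpha_d)$, argue that $\alpha_d\to 0$, deduce $d\alpha_d\to\arcsinh(1)$, and convert back to $2m_0(d)/d$. Your monotonicity argument for $\alpha_d\to 0$ is a more careful version of what the paper dismisses with ``this equation clearly implies $\lim_{d\to\infty}z(d)=0$'', but the overall strategy is the same.
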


\begin{proof}
%From Corollary~\ref{coro:empty}, $m_0(d)$ is the unique solution to $u_d=0$ or, equivalently, to $\tilde{u}_d=0$.
%Let $\alpha_0=\alpha_0(d)=\arcsinh\left(\frac{1}{2m_0(d)}\right)$ be the unique solution of 
The equation
\[
\sinh (d\alpha_0)= \cosh (\alpha_0)
\]
%This  equation 
clearly implies $\lim_{d\to \infty} \alpha_0(d)=0$. Thus 
%for $\alpha_0=\alpha_0(d)$ 
we can  approximate $\sinh(\alpha_0)\sim \alpha_0$ and $\cosh(\alpha_0)\sim 1$
in the standard sense that $a\sim b$ means $\lim_{d\to \infty} a/b =1$.
Hence:
\[
\sinh(d\alpha_0) \sim 1 
\ \ \Rightarrow \ \ 
d\alpha_0 \sim \arcsinh(1)
\ \ \Rightarrow \ \ 
\frac1{2m_0(d)} = \sinh(\alpha_0) \sim  \alpha_0 \sim \frac{\arcsinh(1)}d.
%\qedhere
\]
\end{proof}

\begin{remark}
This proof also implies that $2m_0(d)$ is always \emph{smaller} than the asymptotic value $\frac{d}{\arcsinh(1)} \simeq 1.1346 d$, because all the ``$\sim$'' in the proof are in fact ``$>$''. 
But (experimentally) the convergence is fairly quick. 
Up to at least $d=1000$ we have
\[
\lfloor m_0(d) \rfloor = \left\lfloor\frac{d}{2\arcsinh(1)}\right\rfloor.
\]
\end{remark}

\subsection{The boundary of \texorpdfstring{$\A{d}{m}$}{S(d,m)}}

We here again assume $m$ to be an integer, so that $\A{d}{m}$ is a lattice simplex.
It is easy to show that it is not always cyclic:

\begin{proposition}
If $d+1\equiv0\pmod 3$ and $m$ is odd, then all facets of $\A{d}{m}$ have even volume. Hence $\A{d}{m}$ is not cyclic.
\end{proposition}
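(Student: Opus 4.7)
The plan is to use the explicit formula from Proposition~\ref{u-prop} to pin down the parities of the entries of the $u$-vector, translate this into a parity statement about facet volumes, and then invoke the cyclicity criterion of Proposition~\ref{prop:cyclic} to derive a contradiction.

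First, I would note that each $u^{(i)}$ is an integer functional vanishing on the facet $F_i$ of $\A{d}{m}$ opposite $v^{(i)}$, with $\langle u^{(i)},v^{(i)}\rangle=N:=\det M(d,m)$. Restricted to the ambient affine hyperplane $\{x:\sum_j x_j=1\}$ it is only determined up to a common additive constant, so the divisor one must extract to make it primitive on this hyperplane equals $\gcd_{j,k}(u^{(i)}_j-u^{(i)}_k)$, and this divisor coincides with the normalized volume $N_i$ of $F_i$. By the cyclic symmetry of $\A{d}{m}$ all $N_i$ are equal, so it suffices to show $N_0$ is even; for this, it is enough to prove that all entries $u_k$ of $u=u^{(0)}$ share the same parity, and in fact we will show they are all even.

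Next, I would reduce modulo $2$, using that $m$ is odd. The continuant recursion $c_k=c_{k-1}+m^2 c_{k-2}$ from Lemma~\ref{c-lemma} becomes the Fibonacci recursion mod $2$, yielding the periodic pattern $c_k \bmod 2 \,:\,1,1,0,1,1,0,\ldots$; in particular $c_k$ is even iff $k\equiv 2\pmod 3$. By Proposition~\ref{u-prop}, $u_k\equiv c_{d-k}+c_{k-1}\pmod 2$. A short case analysis on $k\bmod 3$, using the hypothesis $d\equiv 2\pmod 3$, shows that $c_{d-k}\equiv c_{k-1}\pmod 2$ in every residue class (both summands are $0$ when $k\equiv 0$ and both are $1$ otherwise). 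Hence every $u_k$ is even, and consequently each $N_i$ is even.

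Finally, I would conclude non-cyclicity by contradiction: if $\A{d}{m}$ were cyclic with generator $b=(b_0,\ldots,b_d)$, then by Proposition~\ref{prop:cyclic}(2) we would have $N_i=\gcd(N,b_i)$ even for every $i$, forcing $2\mid b_i$ for all $i$, in direct contradiction with $\gcd(b_0,\ldots,b_d)=1$ from Proposition~\ref{prop:cyclic}(1). The only mildly technical step is the mod-$2$ case analysis above; once the period-$3$ pattern of $c_k$ modulo $2$ is identified the rest of the argument is purely formal.
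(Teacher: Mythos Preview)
Your argument is correct, but it follows a different route from the paper's. The paper exhibits an explicit witness: the point with barycentric coordinates $\tfrac12(1,1,0,1,1,0,\ldots)$ has Cartesian coordinates $\tfrac12(1-m,1+m,0,\ldots)$, which is a lattice point precisely when $m$ is odd; since it lies (modulo $\Lambda_{\A{d}{m}}$) in the affine span of every third facet, those facets---and by cyclic symmetry all facets---must have even volume. This is a direct, coordinate-level construction that does not touch the $u$-vector at all.

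Your approach instead reads off the parity of every entry $u_k$ from the explicit formula of Proposition~\ref{u-prop}, via the Fibonacci-type period-$3$ pattern of $c_k\bmod 2$. This is exactly the computation the paper carries out \emph{later}, in the $(4)\Rightarrow(1)$ direction of Lemma~\ref{gcd-lemma}, so you are in effect front-loading part of that lemma. Both arguments are short; the paper's has the advantage of producing an explicit non-trivial element of $G_{F_i}$, while yours fits more naturally with the eventual full determination of $\gcd(u_0,\ldots,u_d)$ and requires no geometric insight beyond the identification of the facet volume with that gcd. (Incidentally, your formula $N_i=\gcd_{j,k}(u_j-u_k)$ agrees with the paper's $N_i=\gcd_j u_j$ here because $u^{(i)}$ attains the value~$0$ on the lattice, forcing $\gcd_{j,k}(u_j-u_k)\mid u_0$; since you go on to show every $u_k$ is even, either formula suffices.)
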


\begin{proof}
Assume $d+1\equiv 0 \pmod 3$ and let $p$ be (the class modulo $\Lambda_{\A{d}{m}}$ of) a point with barycentric coordinates $\frac12(1,1,0,1,1,0,1,1,0,\dots)\pmod1$. Its cartesian coordinates in $\R^{d+1}/\Z^{d+1}$ are $\frac12(1-m,1+m,0,1-m,1+m,0,1-m,1+m,0, \dots)$, which is an integer vector if (and only if) $m$ is odd. By construction, $p + \Lambda_{\A{d}{m}}$ intersects the affine span of every third facet. This implies those facets (and, by cyclic symmetry, all facets) have even volume.

A simplex in which all facet volumes have a common factor cannot be cyclic, by Proposition~\ref{prop:cyclic}.
\end{proof}

In what follows we show that the values of $d$ and $m$ in this proposition are the only ones for which $\A{d}{m}$ is \emph{not} cyclic (assuming $d$ even) and that when this happens the quotient group $G_{\A{d}{m}}$ is isomorphic to $\Z_{N/2} \oplus \Z_2$. The key fact is that the volume of each facet of $\A{d}{m}$, which does not depend on the facet, by cyclic symmetry, equals the $\gcd$ of the vector $u$ from the previous sections. (See the proof of Theorem~\ref{thm:boundary} for an explanation).
Our next result computes this $\gcd$:

\begin{lemma}
For every $d\in 2\N$ and  $m\in \Z$ the following conditions are equivalent:
\begin{enumerate}
\item $\gcd(u_0,\ldots,u_{d}) = 2$.
\item $\gcd(u_0,\ldots,u_{d}) \not=1$.
\item $\gcd(u_0,u_2,u_{d}) \not= 1$.
\item $d \equiv 2 \pmod 6$ and $m$ is odd.
\end{enumerate}
\label{gcd-lemma}
\end{lemma}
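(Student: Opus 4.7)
The implications $(1) \Rightarrow (2) \Rightarrow (3)$ are trivial: the first because $2 \neq 1$, the second because $\{u_0, u_2, u_d\}$ is a subset of $\{u_0, \ldots, u_d\}$. So I would focus on the two nontrivial directions, $(3) \Rightarrow (4)$ and $(4) \Rightarrow (1)$, working throughout with the explicit formulas
\[
u_0 = c_d, \qquad u_2 = c_{d-2}\, m^2 - m^{d-1}, \qquad u_d = m\,(m^{d-1} - c_{d-1})
\]
coming from Proposition~\ref{u-prop} (recall $d$ is even, so the sign $(-1)^{d+k-1}$ simplifies).

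For $(3) \Rightarrow (4)$, let $p$ be a prime dividing $\gcd(u_0, u_2, u_d)$; the plan is to deduce that $p=2$, that $m$ is odd, and that $d \equiv 2 \pmod 6$. First, $p \nmid m$: otherwise reducing the recurrence $c_k = c_{k-1} + m^2 c_{k-2}$ modulo $p$ collapses it to $c_k \equiv c_{k-1}$, giving $c_k \equiv 1$ for all $k$, contradicting $p \mid c_d = u_0$. Next, I would exploit the three congruences simultaneously: from $u_0 \equiv 0$ and $c_d = c_{d-1}+m^2 c_{d-2}$, obtain $c_{d-1} \equiv -m^2 c_{d-2} \pmod p$; from $u_d \equiv 0$ and $p\nmid m$, $m^{d-1} \equiv c_{d-1} \pmod p$; from $u_2 \equiv 0$, $m^{d-1} \equiv c_{d-2}\,m^2 \pmod p$. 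Chaining these gives $2\, c_{d-2}\, m^2 \equiv 0 \pmod p$, so either $p=2$ or $p \mid c_{d-2}$; in the latter case $m^{d-1} \equiv 0$, forcing $p\mid m$, a contradiction. Thus $p=2$, and then $2\mid c_d$ forces $m$ odd (else the mod-2 reduction makes $c_d \equiv 1$). Finally, for odd $m$ the recurrence mod $2$ is the Fibonacci recurrence with initial data $1,1$, which vanishes precisely at indices $\equiv 2 \pmod 3$; so $2\mid c_d$ gives $d \equiv 2 \pmod 3$, and combined with $d$ even, $d \equiv 2 \pmod 6$.

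For $(4) \Rightarrow (1)$, assume $d \equiv 2 \pmod 6$ and $m$ odd, and establish two facts. First, every $u_k$ is even: using the mod-2 Fibonacci periodicity ($c_r \equiv 0 \pmod 2$ iff $r \equiv 2 \pmod 3$), one checks by cases on $k \pmod 3$ that $c_{d-k} \equiv c_{k-1} \pmod 2$, so in the expression $u_k = c_{d-k}m^k + (-1)^{d+k-1} c_{k-1} m^{d+1-k}$ the two summands have the same parity and $u_k$ is even. Second, $4 \nmid u_0$: since $m^2 \equiv 1 \pmod 8$ for $m$ odd, the recurrence reduces to $c_k \equiv c_{k-1}+c_{k-2} \pmod 4$, producing the period-$6$ sequence
\[
1,\,1,\,2,\,3,\,1,\,0,\,1,\,1,\,2,\,3,\,1,\,0,\,\ldots \pmod 4,
\]
so $c_d \equiv 2 \pmod 4$ whenever $d \equiv 2 \pmod 6$. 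Combined with $(3)\Rightarrow(4)$ already established, the only prime dividing the full gcd is $2$, and it divides $u_0$ exactly once, so $\gcd(u_0,\ldots,u_d) = 2$.

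The main obstacle is the three-way elimination in $(3)\Rightarrow(4)$: one needs to manipulate the congruences so that $m^{d-1}$ and $c_{d-1}$ cancel cleanly, leaving the factor of $2$ that pins the only admissible prime. The mod-$4$ computation is routine once one notices that $m$ odd lifts the Fibonacci recurrence verbatim.
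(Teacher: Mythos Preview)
Your proof is correct and follows essentially the same route as the paper: the same three congruences from $u_0,u_2,u_d$ are combined to force $p=2$ (the paper lands on $2m^{d-1}\equiv 0$ rather than your $2c_{d-2}m^2\equiv 0$, but these are one substitution apart), and the $(4)\Rightarrow(1)$ direction is handled identically via the mod-$2$ and mod-$4$ Fibonacci periodicities of the $c_k$. The only cosmetic difference is that the paper deduces $p\nmid m$ from $c_d\equiv 1\pmod m$ rather than from collapsing the recurrence.
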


%\Paco{
% Another explanation, which actually says something about *all* circulant simplices: If the quotient group of a circulant $d$-simplex is isomorphic to $\Z_{N/2}\oplus \Z_2$ (and not cyclic) then $d+1$ is either even or a multiple of three (or both).
% 
%Proof: $\Z_{N/2}\oplus \Z_2$ has three elements of order two. Order two implies their barycentric coordinates (modulo 1) are in $\{0,1/2\}^{d+1}$. The only way to have a set of three non-zero points in $\{0,1/2\}^{d+1}$ symmetric under cyclic shift of coordinates is that either (a) one point has all coordinates equal to 1/2, and the other two points have half of them (which implies $d+1$ even) or (b) each point has one third, or each point has two thirds, of the coordinates equal to $1/2$ (which implies $d+1$ is a multiple of three).
% }
 
\begin{proof}

(1) $\ra$ (2) $\ra$ (3) are clear.
In the following we will often use Lemma~\ref{c-lemma} and Proposition~\ref{u-prop} without further mentioning.

To show (3) $\ra$ (4), if $p$ be a prime that divides $u_0,u_2,u_{d}$ then $u_0 = c_d=c_{d-1}+m^2 c_{d-2} \equiv 0 \pmod p$. As $c_d \equiv 1 \pmod m$,  $p$ does not divide $m$, and 
\begin{equation}
c_{d-1} \equiv -m^2c_{d-2} \pmod p
\label{peq1}
\end{equation}
Next, $u_2 = c_{d-2}m^2-m^{d-1} \equiv 0 \pmod p$, hence, as $p \not|m$, 
\begin{equation}
c_{d-2} \equiv m^{d-3} \pmod p
\label{peq2}
\end{equation}
Finally, $u_{d}=m^d - c_ {d-1}m \equiv 0 \pmod p$, hence, again as $p \not|m$, 
\begin{equation}
c_{d-1} \equiv m^{d-1} \pmod p
\label{peq3}
\end{equation}
From Equations~\eqref{peq1},\eqref{peq2},\eqref{peq3} we get $m^{d-1} \equiv -m^{d-1} \pmod p$. As $p$ does not divide $m$, we see $p=2$. Now, $c_d \equiv 1 \pmod m$ implies $m$ is odd and $c_d=u_0$ is even. This gives $d\equiv 2\pmod 3$ since, for odd $m$, Lemma~\ref{c-lemma}(1) gives the Fibonacci recursion
\[
c_k \equiv c_{k-1} + c_{k-2} \pmod 4,  \qquad c_{-1}=0, \quad c_0=1,
\]
which implies
\begin{equation}
c_k \text{ is even } \;\lra\; k \equiv 2 \pmod 3.
\label{modness} 
\end{equation}
Since $d$ is even, $d \equiv 2 \pmod 3$ implies $d \equiv 2 \pmod 6$, as desired.

It remains to prove (4) $\ra$ (1). 
Let $d \equiv 2\pmod 6$ and $m$ be odd. Let $k \in \{0, \ldots, d\}$. From Proposition~\ref{u-prop} we have $u_{k} = c_{d-k} \, m^{k} + (-1)^{k-1} c_{k-1}\, m^{d+1-k} \equiv c_{d-k} + c_{k-1} \pmod 2$, as $m$ is odd. Therefore, from the equivalence \eqref{modness} it is straightforward to verify that $d \equiv 2 \pmod 6$ implies that $u_{k}$ is even for every $k$. So, $2$ divides $\gcd(u_0,\ldots,u_{d})$. From the argument in the proof of (3) $\ra$ (4), we know that $2$ is the only prime divisor of $\gcd(u_0,\ldots,u_{d})$. Hence, to show (1) it suffices to show that $4$ does not divide $\gcd(u_0,\ldots,u_{d})$. 
This follows again from the Fibonacci recursion modulo $4$, which implies 
\[c_k \equiv 0 \pmod 4 \;\lra\; k \equiv 5 \pmod 6.\]
Thus, if  $u_0 = c_d \equiv 0 \pmod 4$ then $d \equiv 5 \pmod 6$, a contradiction.
\end{proof}

Still, even in the non-cyclic case the facets of $\A{d}{m}$ (for $d > 2$) are empty, no matter how large $m$ is. 

\begin{theorem} Let $d$ be even and $m$ be an integer. 
Then
\begin{enumerate}
\item If $d=2\pmod 6$ and $m$ is odd then all facets of $\A{d}{m}$ have volume two and the quotient group $G_{\A{d}{m}}$
is isomorphic to $\Z_{N/2}\oplus \Z_2$.
The facets are still empty (unless $d=2$).

\item Otherwise, facets of $\A{d}{m}$ are unimodular. Hence, $\A{d}{m}$ is cyclic.
\end{enumerate}
\label{thm:boundary}
\end{theorem}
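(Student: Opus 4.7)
The plan is to exploit two ingredients already built up in the paper: the explicit dual vector $u$, whose gcd by Lemma~\ref{gcd-lemma} controls the facet volumes, and the order-two lattice point from the non-cyclicity proposition preceding the theorem. The first step is to identify the (common) volume of a facet of $\A{d}{m}$ with $g:=\gcd(u_0,\dots,u_d)$. Indeed, $\langle u,\cdot\rangle$ vanishes on $\conv(\vi[1],\dots,\vi[d])$ and equals $N=\det M(d,m)$ at $\vi[0]$. One then checks that the step of $\langle u,\cdot\rangle$ on the affine lattice $\Z^{d+1}\cap\{\sum x_i=1\}$ is exactly $g$ (using for example the explicit relation $N=u_0+m(u_1-u_d)$, which forces $g$ to divide both $N$ and every difference $u_i-u_j$, so that it coincides with the step). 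Hence the primitive affine functional defining the facet opposite $\vi[0]$ is $u/g$, the lattice height of $\vi[0]$ above that facet is $N/g$, and $\Vol(F)=g$ by $N=\mathrm{height}\cdot\Vol(F)$. Cyclic symmetry extends this to every facet, and Lemma~\ref{gcd-lemma} splits the proof into $g=1$ (case~(2)) and $g=2$ (case~(1)).

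In case~(2), unimodularity of facets lets Proposition~\ref{prop:notcyclic}(2) immediately yield $G_\Delta\cong\Z_N$, so $\A{d}{m}$ is cyclic. In case~(1) the same short exact sequence becomes $0\to\Z_2\to G_\Delta\to\Z_{N/2}\to 0$. I would next establish $N\equiv 0\pmod 4$ by a short mod-$4$ Fibonacci-like computation of $c_k$ applied to $N=c_d+2m^2c_{d-1}$, using that $m$ is odd and $d\equiv 2\pmod 6$, so that $\Z_{N/2}$ itself has even order. The proof of the non-cyclicity proposition immediately preceding this theorem already exhibits an order-two element of $G_\Delta$, namely the lattice point with barycentric coordinates $\frac12(1,1,0,1,1,0,\dots,1,1,0)$ (period-three pattern, a genuine lattice point precisely because $m$ is odd and $3\mid d+1$). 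In particular $G_\Delta$ is non-cyclic. A non-cyclic finite abelian group of order $N$ admitting a cyclic quotient of order $N/2$ must have invariant factor decomposition $\Z_{N/2}\oplus\Z_2$, giving the claimed structure.

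For the emptiness of facets in case~(1) I would fix $F=\conv(\vi[1],\dots,\vi[d])$ by cyclic symmetry. A cyclic shift of the period-three pattern produces the representative with barycentric coordinates $\frac12(0,1,1,0,1,1,\dots,0,1,1)$, whose $\vi[0]$-coordinate is zero; this lies in $\aff(F)$ and, being of order two, generates the subgroup $G_F\subset G_\Delta$. Hence $F$ is a cyclic $(d-1)$-simplex of volume two whose generator $b_F\in\{0,1\}^d$ has zeros precisely at the positions $3,6,\dots,d-2$, and so contains exactly $2k$ ones where $d+1=3k$. The $h^*$-vector of a cyclic $(d-1)$-simplex of volume two with generator having $2k$ ones is $h^*_0=h^*_k=1$ and all other entries zero, so $L_F(1)=d\cdot h^*_0+h^*_1=d$ as soon as $k\ge 2$, i.e.\ $d\ge 8$. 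Thus $F$ contains only its $d$ vertices for $d>2$, while for $d=2$ we have $k=1$, $h^*_1=1$, and the midpoint of the edge $F$ is the announced extra lattice point.

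The main obstacle I expect is the careful identification of the generator of $G_F$ with the shifted period-three pattern: one must verify that the shifted vector genuinely represents a lattice point of the ambient lattice lying in $\aff(F)$ (and not merely in $\aff(\A{d}{m})$), and that its class has order exactly two in $G_\Delta$, so that it really generates the index-two subgroup $G_F$. Once this bookkeeping is in place, everything else is a direct application of the $h^*$-formula for cyclic simplices of volume two.
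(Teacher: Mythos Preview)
Your proposal is correct and follows the paper's overall architecture: identify the common facet volume with $g=\gcd(u_0,\dots,u_d)$, invoke Lemma~\ref{gcd-lemma} for the dichotomy, and in the $g=2$ case use the short exact sequence plus non-cyclicity to pin down $G_{\A{d}{m}}$. Two remarks on the comparison.

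First, your mod-$4$ computation of $N$ is not needed. Once you know $G_{\A{d}{m}}$ is non-cyclic and sits in $0\to\Z_2\to G\to\Z_{N/2}\to 0$, the only abelian extension other than $\Z_N$ is $\Z_{N/2}\oplus\Z_2$; divisibility of $N$ by $4$ is a consequence, not a hypothesis. (The paper deduces non-cyclicity slightly differently, via Proposition~\ref{prop:cyclic}: if $G$ were cyclic with generator $b$, every facet volume $\gcd(N,b_i)=2$ would force all $b_i$ even, contradicting $\gcd(b_0,\dots,b_d)=1$. Your route through the explicit order-two point works equally well.)

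Second, and more substantively, your emptiness argument for facets is correct but considerably heavier than the paper's. The paper observes that $\Vol(F)=2$ forces every barycentric coordinate of any lattice point in $\aff(F)$ to lie in $\frac12\Z$; a non-vertex lattice point of $F$ would then have exactly two coordinates equal to $\frac12$ and the rest zero, i.e.\ would be the midpoint of an edge of $\A{d}{m}$. A direct inspection of $\vi[i]+\vi[j]$ shows this never has all coordinates even once $d\ge3$ (the entry $1$ at position $i$ survives when $|i-j|\ge3$, and for $|i-j|\le2$ an isolated $\pm m$ survives), so no such midpoint is a lattice point. Your route---identifying the generator of $G_F$ as the shifted period-three vector, reading off that it has $2k$ ones, and computing $h^*_0=h^*_k=1$ so that $L_F(1)=d$ for $k\ge2$---is a perfectly valid alternative and has the virtue of being coordinate-free once the generator is known, but it requires the $h^*$ machinery where a two-line parity check suffices.
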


\begin{proof}
Observe that $u$ is, modulo signs, the vector of $d \times d$-minors of the matrix obtained by deleting the first column from $M(d,m)$. Thus, $\gcd(u_0, \ldots, u_d)$ equals the volume of the facet $F$ opposite to the first vertex of $\A{d}{m}$. Since, by cyclic symmetry, all facets have the same volume,  Lemma~\ref{gcd-lemma} implies most of the statement:  if $d \ne 2 \pmod 6$ or $m$ is even, then all facets are unimodular, hence $\A{d}{m}$ is cyclic. 
If $d = 2 \pmod 6$ and $m$ is odd, then all facets have volume two, so the exact sequence of Proposition~\ref{prop:notcyclic} becomes
\[
0 \to \Z_2 \to G_{\A{d}{m}} \to \Z_{N/2} \to 0.
\]
Since $G_{\A{d}{m}}$ is not cyclic (by Proposition~\ref{prop:cyclic}) this implies $G_{\A{d}{m}}\cong \Z_{N/2}\oplus \Z_2$.

To show that facets are still empty, observe that $\Vol(F)=2$ implies that every barycentric coordinate of every lattice point in $\aff(F)$ is in $\frac12\Z$. Hence, if $F$ was not empty then it would contain a lattice point with two barycentric coordinates equal to $\frac12$ and the rest equal to $0$. That is to say, the mid-point of an edge would be a lattice point, which clearly is not the case in $\A{d}{m}$, when $d\ge 3$. 
\end{proof}

\medskip


\begin{thebibliography}{99}


\bibitem{ACMS}
Gennadiy Averkov, Giulia Codenotti, Antonio Macchia, and Francisco Santos. A local maximizer for lattice width of 3-dimensional hollow bodies. 
%Preprint, \href{https://www.arxiv.org/abs/1907.06199}{arXiv:1907.06199}, 2019. 
\emph{Discrete Applied Math.} \textbf{298} (July 2021), 129--142 
DOI: \href{https://doi.org/10.1016/j.dam.2021.04.009}{10.1016/j.dam.2021.04.009}. 


\bibitem{BBBK11}
Margherita {Barile}, Dominique {Bernardi}, Alexander {Borisov}, and Jean-Michel {Kantor}. 
  {{On empty lattice simplices in dimension $4$.}}. \emph{Proc. Am. Math. Soc.} \textbf{139}:12 (2011), 4247--4253.
  
  \bibitem{Banaszczyk1996}
  Wojciech {Banaszczyk}. 
  Inequalities for convex bodies and polar reciprocal lattices in $\mathbb R^n$ II: Application of K-convexity. 
  \emph{Discrete Comput. Geom.} \textbf{16} (1996), 305--311.
  
  \bibitem{Banaszczyk_etal1999}
Wojciech {Banaszczyk}, Alexander~E. {Litvak}, Alain {Pajor}, and Stanislaw~J. {Szarek}. 
  {{The flatness theorem for nonsymmetric convex bodies via the
  local theory of Banach spaces.}} \emph{Math. Oper. Res.} \textbf{24}:3 (1999), 728--750.
  
\bibitem{CodenottiSantos2020}
Giulia Codenotti and Francisco Santos. 
Hollow polytopes of large width. 
%arXiv:1812.00916 
\emph{Proc. Am. Math. Soc.} \textbf{148}(2) (2020), 835--850. DOI: \href{https://doi.org/10.1090/proc/14721}{10.1090/proc/14721}. 

\bibitem{CoxLittleSchenck2011}
David A. Cox, John B. Little, and Henry K. Schenck. Toric varieties. Providence, RI: American Mathematical Society (AMS), 2011.

\bibitem{FlajSedge}
Phillipe Flajolet and Robert Sedgewick.
  \emph{Analytic combinatorics.}
  Cambridge U.~Press, 2009.

\bibitem{ConcreteMath94} Ronald L. Graham, Donald E. Knuth, and Oren Patashnik. 
Concrete mathematics. A foundation for computer science. 
$2^{\text{nd}}$ edition. Addison-Wesley Publishing Co., Reading, MA, 1994.

\bibitem{HaaseZiegler2000} Christian Haase and G\"unter M.~Ziegler. 
On the maximal width of empty lattice simplices. 
\emph{Eur.~J. Comb.} \textbf{21} (2000), 111--119.  

\bibitem{HerrRehnSchuermann15} 
Katrin Herr, Thomas Rehn, and Achill Sch\"urmann. 
On Lattice-free Orbit Polytopes. 
\emph{Discrete~Comput.~Geom.} \textbf{53} (2015), 144--172.

\bibitem{HogBic73}
Verner E. Hoggatt and  Marjorie Bicknell. 
Roots of Fibonacci polynomials. 
\emph{Fibonacci Q.} \textbf{11}:3 (1973), 271--274. 
\href{https://www.fq.math.ca/Scanned/11-3/hoggatt1.pdf}{https://www.fq.math.ca/Scanned/11-3/hoggatt1.pdf}
%ISSN 0015-0517. MR 0332645.

\bibitem{HogLon74}
Verner E. Hoggatt and Calvin T. Long. 
Divisibility properties of generalized Fibonacci polynomials. 
\emph{Fibonacci Q.} \textbf{12}:2 (1974), 113--120.
\href{https://www.fq.math.ca/Scanned/12-2/hoggatt2.pdf}{https://www.fq.math.ca/Scanned/12-2/hoggatt2.pdf}

\bibitem{Hurkens1990} 
Cor A. J. Hurkens.
Blowing up convex sets in the plane. 
\emph{Linear Alg. Appl.} \textbf{134} (1990), 121--128.

\bibitem{IglesiasSantos2019}
\'Oscar Iglesias-Vali\~no and Francisco Santos,
Classification of empty lattice $4$-simplices of width larger than two. 
\emph{Trans.~Am.~Math.~Soc.}, \textbf{371}:9 (2019), 6605--6625. 
DOI: \href{https://doi.org/10.1090/tran/7531}{10.1090/tran/7531}.
%arXiv:1704.07299

\bibitem{IglesiasSantos2020+}
\'Oscar Iglesias-Vali\~no and Francisco Santos. 
The complete classification of empty lattice 4-simplices. 
%Preprint, \href{https://www.arxiv.org/abs/1908.08933}{arXiv:1908.08933}, 2019. 
\emph{Rev. Mat. Iberoam.},  37:6 (2021), 2399--2432. 
DOI: \href{https://doi.org/10.4171/rmi/1268}{10.4171/rmi/1268}.

\bibitem{KannanLovasz1988} Ravindran Kannan and L\'aszl\'o Lov\'asz.
Covering minima and lattice-point-free convex bodies. 
\emph{Ann. Math. (2)} \textbf{128}:3 (1988), 577--602.

\bibitem{Kantor}
Jean-Michel Kantor. 
On the width of lattice-free simplices. 
\emph{Compos.~Math.}, {\bf 118} (1999), 235--241.

\bibitem{Khinchine}
Aleksandr Khinchine. 
A quantitative formulation of Kronecker's theory of approximation. 
\emph{Izv. Acad. Nauk SSSR, Ser. Mat.} \textbf{12} (1948), 113--122 (in Russian).

\bibitem{LadishSchuermann}
Frieder Ladisch, Achill Sch\"urmann.
Equivalence of Lattice Orbit Polytopes.
\emph{SIAM J. Appl. Algebra Geom.} \textbf{2} (2018), 259--280.
%DOI: \href{https://doi.org/10.1137/17M1120130}{10.1137/17M1120130}.

\bibitem{Matsuki2002}
Kenji Matsuki. Introduction to the Mori program. New York, NY: Springer, 2002.

\bibitem{MSW21} 
Lukas Mayrhofer, Jamico Schade, and Stefan Weltge. Lattice-free simplices with lattice width $2d-o(d)$. 
%Preprint, \href{https://arxiv.org/abs/2111.08483}{arXiv:2111.08483}, 2021.
In \emph{Integer Programming and Combinatorial Optimization. IPCO 2022} (K.~Aardal, L.~Sanit\`a, eds.),  Lecture Notes in Computer Science, volume 13265, Springer, Cham, pp 375--386.
DOI: \href{https://doi.org/10.1007/978-3-031-06901-7_28}{10.1007/978-3-031-06901-7\_28}





%\bibitem{MilnorHusemoller1973}
%John Milnor and Dale Husemoller.
%\emph{Symmetric Bilinear Forms}.
%Springer-Verlag, 1973.

\bibitem{RehnThesis}
Thomas Rehn. Exploring core points for fun and profit: a study of lattice-free orbit polytopes. PhD thesis, Universit\"at Rostock, 2013. \url{http://rosdok.uni-rostock.de/file/rosdok_disshab_0000001151/rosdok_derivate_0000005285/Dissertation_Rehn_2014.pdf}

\bibitem{Rudelson2000}
Mark Rudelson. Distances between nonsymmetric convex bodies and the MM$^*$-estimate. 
\emph{Positivity} \textbf{4}:2 (2000),  161--178.

\bibitem{Sebo1999}
Andr{\'a}s Seb{\H{o}}. {An introduction to empty lattice simplices},.
  In \emph{Integer programming and combinatorial optimization ({G}raz, 1999)}, {Lecture Notes in Comput. Sci.}, vol. 1610, Springer, Berlin, 1999, pp.~400--414.

\bibitem{White1964}
George~K.~White.
\newblock Lattice tetrahedra.
\newblock {\em Can.~J.~Math.} \textbf{16} (1964), 389--396.
\end{thebibliography}
\end{document}